%  Typeset with LaTeX format
\documentclass[11pt,a4paper,oneside]{amsart}
\tolerance=1500
%\openup 1.5\jot

\makeatletter
\@namedef{subjclassname@2010}{%
  \textup{2010} Mathematics Subject Classification}
\makeatother
\usepackage{a4wide}
\usepackage{enumerate}

\usepackage{amsfonts}
\usepackage{amssymb}
\usepackage{amsmath}
\usepackage{amsthm}

\usepackage[all]{xy}
\usepackage{graphicx}
\usepackage{mathrsfs}
\usepackage{color}
\usepackage{url}
\usepackage{array}
\usepackage{pifont}
\usepackage{caption}
\usepackage{booktabs}
\usepackage{setspace}

\allowdisplaybreaks

\newtheorem{thm}{Theorem}[section]
\newtheorem{lem}[thm]{Lemma}

\newtheorem{note}[thm]{Note}
\newtheorem{mukaispecseq}[thm]{Mukai Spectral Sequence}
\newtheorem{dualspecseq}[thm]{``Duality'' Spectral Sequence}
\newtheorem{specseq}[thm]{Spectral Sequence}

\newtheorem{prop}[thm]{Proposition}
\newtheorem{conj}[thm]{Conjecture}

\newtheorem{defn}[thm]{Definition}
\newtheorem{notation}[thm]{Notation}
\theoremstyle{remark}
\newtheorem{rem}[thm]{Remark}
\newtheorem{exmp}[thm]{Example}
\newtheorem*{theorem*}{Theorem}
\newtheorem*{notation*}{Notation}
\theoremstyle{remark}

%this has equations numbered within sections 1.1,1.2, ... 2.1,...
\numberwithin{equation}{section}

%-------------------------------------------
%       Begin Local Macros
%-------------------------------------------

\newcommand{\m}{\mathbf{m}}

\newcommand{\HX}{\widehat{X}}

\newcommand{\coh}{\operatorname{Coh}}
\newcommand{\adiag}{\operatorname{adiag}}
\newcommand{\Supp}{\operatorname{Supp}}

\newcommand{\ch}{\operatorname{ch}}

\newcommand{\Aut}{\operatorname{Aut}}
\newcommand{\GL}{\operatorname{GL}}
\newcommand{\SL}{\operatorname{SL}}

\newcommand{\Ext}{\operatorname{Ext}}
\newcommand{\calEnd}{\operatorname{\mathcal{E}\textit{nd}}}
\newcommand{\calHom}{\operatorname{\mathcal{H}\textit{om}}}
\newcommand{\calExt}{\operatorname{\mathcal{E}\textit{xt}}}
\newcommand{\Hom}{\operatorname{Hom}}

\newcommand{\id}{\operatorname{id}}
\newcommand{\rk}{\operatorname{rk}}
\newcommand{\RR}{\operatorname{\mathbf{R}}}

\newcommand{\Pic}{\operatorname{\mathbf{Pic}}}
\newcommand{\HN}{\operatorname{HN}}
\newcommand{\NS}{\operatorname{NS}}

\newcommand{\SH}{\operatorname{\scriptscriptstyle{H}}}

\newcommand{\Up}{\Upsilon}
\newcommand{\HUp}{\widehat{\Upsilon}}
\newcommand{\TUp}{\widetilde{\Upsilon}}
\newcommand{\HPi}{\widehat{\Pi}}
\newcommand{\TPi}{\widetilde{\Pi}}
\newcommand{\HGa}{\widehat{\Gamma}}
\newcommand{\Ga}{\Gamma}

\newcommand{\om}{\omega}

\newcommand{\A}{\mathcal{A}}
\newcommand{\B}{\mathcal{B}}
\newcommand{\D}{\mathcal{D}}
\newcommand{\E}{\mathcal{E}}
\newcommand{\F}{\mathcal{F}}

\newcommand{\PP}{\mathcal{P}}
\newcommand{\OO}{\mathcal{O}}

\newcommand{\T}{\mathcal{T}}
\newcommand{\U}{\mathcal{U}}

\newcommand{\SC}{\mathscr{C}}

\newcommand{\HH}{\mathscr{H}}
\newcommand{\SM}{\mathscr{M}}

\newcommand{\Po}{\mathscr{P}}

\newcommand{\C}{\mathbb{C}}

\newcommand{\Q}{\mathbb{Q}}
\newcommand{\R}{\mathbb{R}}

\newcommand{\Z}{\mathbb{Z}}

%-------------------------------------------
%       End Local Macros
%-------------------------------------------

\begin{document}

\title[Bridgeland Stability Conditions on Abelian Threefolds]
{Fourier-Mukai Transforms and Bridgeland Stability Conditions on
  Abelian Threefolds II}
\author[Antony Maciocia \& Dulip Piyaratne]{Antony Maciocia \& Dulip Piyaratne}

\address{AM: School of Mathematics \\ The University of Edinburgh \\
The King's Buildings \\ Peter Guthrie Tait Road \\
Edinburgh EH9 3FD  \\    UK.}
\email{ A.Maciocia@ed.ac.uk }

\address{DP: Kavli Institute for the Physics and Mathematics of the Universe (WPI)\\ The University of Tokyo Institutes for Advanced Study \\ The University of Tokyo \\ Kashiwa \\ Chiba 277-8583 \\ Japan.}
\email{ dulip.piyaratne@ipmu.jp }

%\thanks{aaaa}

\date{\today}

\subjclass[2010]{Primary 14F05; Secondary 14J30, 14J32,
  14J60, 14K99, 18E30, 18E35, 18E40}

\keywords{Bridgeland stability conditions, Fourier-Mukai transforms,
  Abelian threefolds, Bogomolov-Gieseker inequality}

\begin{abstract}
We show that the conjectural construction proposed by  Bayer, Bertram,
Macr\'i and Toda  gives rise to  Bridgeland stability conditions for a
principally polarized abelian threefold with Picard rank one by
proving that tilt stable objects satisfy the strong  Bogomolov-Gieseker
type inequality. This is done by showing certain Fourier-Mukai transforms
give equivalences of abelian categories which are double tilts of
coherent sheaves.
\end{abstract}

\maketitle
%%%%%%%%%%%%%%%%%%%%%%%%%%%%%%%%%%%%%%%%%%%%%%%%%%%%%%%%%%%%%%%%%%%%%%%%%%%%%%%%%%%%%%%%%%%%%%%%
\section*{Introduction}
\label{section0}
%%%%%%%%%%%%%%%%%%%%%%%%%%%%%%%%%%%%%%%%%%%%%%%%%%%%%%%%%%%%%%%%%%%%%%%%%%%%%%%%%%%%%%%%%%%%%%%
There is a growing interest in the study of Bridgeland stability
conditions on varieties. This notion was introduced in
\cite{Bri1} and some known examples can be found in \cite{Bri2, ABL, Okada, Macri1, Macri2, MP, Sch}.
See  \cite{Huy3} and \cite[Appendix D]{BBR} for comprehensive expositions on the subject.
Construction of such stability conditions on a given Calabi-Yau
threefold is an important problem but the only known example is on
an abelian threefold (see \cite{MP}). For further motivation from, for
example, Mathematical Physics, see \cite{Asp, Tod1}.
A conjectural construction of such a stability condition for any
projective threefold was introduced
by  Bayer, Bertram, Macr\'i and Toda in \cite{BMT, BBMT}.
Here they introduced the notion of tilt stability for objects in an
abelian subcategory  of the derived category which is a tilt of
coherent sheaves. These have now been studied extensively: \cite{Tod2, LM, BMT, Macri2, MP, Sch}.
This conjectural construction has boiled down to the requirement that
certain tilt stable objects satisfy a so-called (weak) Bogomolov-Gieseker (B-G for
short) type inequality. Moreover they went in to propose a stronger
version of this inequality which is known to hold for
projective 3-space (see \cite{Macri2}) and smooth quadric threefold (see \cite{Sch}).
In \cite{MP}, for a principally
polarized abelian threefold, we prove that tilt stable
objects satisfy the weak B-G type inequality associated to a special
complexified ample class. It was achieved by establishing an equivalence of
two abelian categories given by the classical Fourier-Mukai transform
with kernel the  Poincar\'e bundle. The aim of this paper is to
extend those ideas for certain kind of non-trivial Fourier-Mukai transforms (FMT
for short) to establish the {\em strong} B-G
type inequality for the same abelian threefold.

For an abelian variety $X$, the group of FMTs
$\Aut D^b(X)$ is well understood  via the notion of isometric
isomorphism (see \cite{Orl2} or \cite[Chapter 9]{Huy1}). To any FMT $\Phi_\E$ with kernel
$\E$  Orlov constructed an isometric automorphism $f_\E$ of the
product $X \times \HX$. He showed that $\Phi_\E \mapsto
f_\E$ is a surjective map of
groups and its kernel consists of trivial FMTs (which send skyscraper
sheaves to skyscraper sheaves up to shift) which also preserve
$\Pic^0(X)$ up to shift. 
When $(X,L)$ is a principally polarized 
%$g$-dimensional
abelian variety,
let $\widetilde{\SL(2,\Z)}$ be the central $\Z$-extension of the group
$\SL(2, \Z)$ generated by
the FMT with kernel the Poincar\'e bundle on the product $X \times X$,
$(-)\otimes L  $ and $[1]$
as a subgroup of $\Aut D^b(X)$.
So $(X \times \HX) \rtimes \widetilde{\SL(2,\Z)}$ is a subgroup
of $\Aut D^b(X)$ and
 one can canonically identify the isometric automorphisms of it with
elements from $\SL(2,\Z)$.
Any FMT $\Phi_\E $  induces a  linear isomorphism
 $\Phi_\E^{\SH} \in \GL(H^{2*}(X , \Q))$, called the cohomological Fourier-Mukai transform, and this gives rise to a
representation of the corresponding group of FMTs.
In this paper, when $X$ is
principally polarized with Picard rank one, we obtain an explicit
matrix description for this representation of $(X \times \HX) \rtimes \widetilde{\SL(2,\Z)}$ in terms of  $f_\E$. As a
result, any such induced non-trivial transform on $H^{2*}(X , \Q)$ is an
anti-diagonal matrix with respect to some suitable twisted Chern
characters. This allows us to handle the numerology in the same way as that of the classical
FMT. A matrix representation for the induced
transform of an abelian surface was also considered in \cite{YY1}.

When $\alpha \in \NS_\C(X)$ is a complexified ample class, it is
expected that $Z_{\alpha} (-) = - \int_X e^{-\alpha} \ch(-)$ defines a
central charge function of some stability condition on $X$.
The space of all stability conditions carries a natural left action
of the group $\Aut D^b(X)$. This can be defined via
a natural left action of $\Aut D^b(X)$ on $\Hom(K(X), \C)$.
When $(X,L)$ is a $g$-dimensional principally polarized abelian
variety with Picard rank one, we can view the action of $\Phi_\E \in
(X \times \HX) \rtimes \widetilde{\SL(2,\Z)}$ on $Z_\alpha$ explicitly as:
$
\Phi_\E \cdot Z_\alpha = \zeta Z_{\alpha'}
$
for some $\alpha' \in \NS_\C(X)$ and $\zeta \in \C^*$ (see
\cite[Appendix]{MYY} for the dimension 2 case). When
$\zeta$ is real one can expect that the FMT $\Phi_\E$ gives an
equivalence of some hearts of particular stability conditions of
$D^b(X)$ whose $\alpha$ and $\alpha'$ are determined by
$\Im\,\zeta=0$ (see Note \ref{prop3.2}).
For example when $g=2$, following similar ideas in \cite{Yosh1}, one
can show that any FMT gives an equivalence of two abelian categories
each of which are tilts of $\coh(X)$ (see \cite{Huy2}).
Understanding the homological FMT for the case of $g=3$ is the basis
of this paper. When the Picard rank is 1, this amounts to
understanding the transforms as a numerical matrix. This then allows
us, in a similar way to \cite{MP}, to show that any non-trivial FMT in 
$(X \times \HX) \rtimes \widetilde{\SL(2,\Z)}$
gives an equivalence of two abelian categories
each of which are double tilts of $\coh(X)$ (see Theorem \ref{prop4.5}).
Minimal objects are sent to minimal objects again under an FMT. This
enables us to obtain an inequality involving the top
part of the Chern character of minimal objects in these abelian
categories, and this is exactly the strong B-G type
inequality. Therefore any tilt stable object with zero tilt slope
satisfies the strong inequality in \cite{BMT} for our abelian
threefold case (see Theorem \ref{prop4.6}).

Toda considered similar ideas in an attempt to construct a ``Gepner''
type stability condition on a quintic threefold using the spherical
twist of the structure sheaf (see \cite{Tod3}). In \cite{Pol1},
Polishchuk tested the existence of stability
conditions for abelian varieties by studying ``Lagrangian-invariant''
objects of $D^b(X)$.

%%%%%%%%%%%%%%%%%%%%%%%%%%%%%%%%%%%%%%%%%%%%%%%%%%%%%%%%%%%%%%%%%%%%%%%%%%%%%%%%%%%%%%%%%%%%%%%%
%%%%%%%%%%%%%%%%%%%%%%%%%%%%%%%%%%%%%%%%%%%%%%%%%%%%%%%%%%%%%%%%%%%%%%%%%%%%%%%%%%%%%%%%%%%%%%%%
\section*{Notation}
%%%%%%%%%%%%%%%%%%%%%%%%%%%%%%%%%%%%%%%%%%%%%%%%%%%%%%%%%%%%%%%%%%%%%%%%%%%%%%%%%%%%%%%%%%%%%%%%
%%%%%%%%%%%%%%%%%%%%%%%%%%%%%%%%%%%%%%%%%%%%%%%%%%%%%%%%%%%%%%%%%%%%%%%%%%%%%%%%%%%%%%%%%%%%%%%%
We follow the notation of the first paper \cite{MP} which we summarize
and extend as follows.
\begin{enumerate}[(i)]
\item We will denote an $n\times n$ anti-diagonal matrix with entries $a_k$ by
$$
\adiag(a_1, \ldots, a_n)_{ij} : = \begin{cases}
a_k & \text{if } i=k, j=n+1-k \\
0 & \text{otherwise}.
\end{cases}
$$

\item For $0 \le i \le \dim X$, $\coh^{\le i}(X) : = \{E \in \coh(X): \dim \Supp(E) \le i  \}$, $\coh^{\ge i}(X) := \{E \in \coh(X): \text{for } 0 \ne F \subset E,   \ \dim \Supp(F) \ge i  \}$ and $\coh^{i}(X) : = \coh^{\le i}(X) \cap \coh^{\ge i}(X)$.

\item For an interval $I \subset \R\cup\{+\infty\}$,
$\HN^{\mu}_{\om, B}(I) := \{ E \in \coh(X) : [\mu_{\om , B}^{-}(E) , \mu_{\om , B}^{+}(E)] \subset I \}$.
Similarly the subcategory $\HN^{\nu}_{\om, B}(I) \subset \B_{\om,B}$
is defined.

\item For a Fourier-Mukai functor $\Upsilon$ and a heart $\mathfrak{A}$ of a t-structure
  for which
$D^b(X)\cong D^b(\mathfrak{A})$, $\Upsilon^k_{\mathfrak{A}}(E) :=
  H^{k}_{\mathfrak{A}}(\Upsilon(E))$.
For a sequence of integers $ i_1, \ldots, i_s$,
\[V^{\Upsilon}_{\mathfrak{A}}(i_1, \ldots, i_s) := \{ E \in D^b(X) :
  \Upsilon^j_{\mathfrak{A}}(E) = 0 \text{ for } j \notin \{i_1,
  \ldots, i_s \} \}.
\]
If $\Up$ is a Fourier-Mukai transform then $E \in \coh(X)$ being $\Upsilon$-WIT$_i$ is equivalent
  to $E \in V^{\Upsilon}_{\coh(X)}(i)$.

\item For  a $g$-dimensional polarized projective variety $(X,L)$ with Picard rank one over $\C$, the Chern character of any $E \in D^b(X)$ is of the form $(a_0, a_1 \ell, a_2 {\ell^2}/{2!}, \ldots, a_g {\ell^g}/{g!})$ for some $a_i \in
  \Z$. Here $\ell : =c_1(L)$. For simplicity we write $\ch(E) = (a_0, a_1, a_2, \ldots, a_g)$. Also
  we abuse notation to write $L^k$ for the functor $(-)\otimes
  L^k$.
\end{enumerate}
%%%%%%%%%%%%%%%%%%%%%%%%%%%%%%%%%%%%%%%%%%%%%%%%%%%%%%%%%%%%%%%%%%%%%%%%%%%%%%%%%%%%%%%%%%%%%%%%
%%%%%%%%%%%%%%%%%%%%%%%%%%%%%%%%%%%%%%%%%%%%%%%%%%%%%%%%%%%%%%%%%%%%%%%%%%%%%%%%%%%%%%%%%%%%%%%%
\section{Preliminaries}
\label{section1}
%%%%%%%%%%%%%%%%%%%%%%%%%%%%%%%%%%%%%%%%%%%%%%%%%%%%%%%%%%%%%%%%%%%%%%%%%%%%%%%%%%%%%%%%%%%%%%%%
\subsection{Construction of stability conditions for threefolds}
%%%%%%%%%%%%%%%%%%%%%%%%%%%%%%%%%%%%%%%%%%%%%%%%%%%%%%%%%%%%%%%%%%%%%%%%%%%%%%%%%%%%%%%%%%%%%%%%
Let us quickly recall the conjectural construction of stability
conditions for a given smooth projective threefold $X$ over $\C$ as
introduced in \cite{BMT}.
Let $\om, B$ be in $\NS_{\R}(X)$ with $\om$ an ample class, i.e. $B+ i
\om \in \NS_{\C}(X)$ is a complexified ample class. The twisted Chern
character with respect to $B$
 is defined by $\ch^B(-) = e^{-B} \ch (-)$.
The twisted slope $\mu_{\om , B}(E)$ of  $E \in \coh(X)$ is defined by
$$
\mu_{\om , B} (E) = \begin{cases}
+ \infty & \text{if } E \text{ is a torsion sheaf} \\
\frac{\om^{2} \ch_1^B(E)}{\ch^B_0(E)} & \text{otherwise}.
\end{cases}
$$
We say  $E \in \coh(X)$ is $\mu_{\om , B}$-(semi)stable, if for any $0 \ne
F \varsubsetneq E$, $\mu_{\om , B}(F)< (\le) \mu_{\om ,
  B}(E/F)$. The Harder-Narasimhan (H-N for short) property holds for
$\coh(X)$ and so we can define the slopes
\begin{align*}
\mu_{\om , B}^{+}(E)  = \max_{0 \ne G \subseteq E} \ \mu_{\om , B}(G), \   \   \
\mu_{\om , B}^{-}(E)  = \min_{G \subsetneq E} \ \mu_{\om , B}(E/G)
\end{align*}
of $E \in \coh(X)$. Then for a given interval $I \subset
\R\cup\{+\infty\}$, the subcategory $\HN^{\mu}_{\om, B}(I) \subset
\coh(X)$ is defined by
$$
\HN^{\mu}_{\om, B}(I) = \{ E \in \coh(X) : [\mu_{\om , B}^{-}(E) , \mu_{\om , B}^{+}(E)] \subset I \}.
$$
The subcategories  $\T_{\om , B}$ and $\F_{\om , B}$ of $\coh(X)$ are defined by
\begin{align*}
\T_{\om , B} = \HN^{\mu}_{\om, B}(0, +\infty], \ \ \
\F_{\om , B} = \HN^{\mu}_{\om, B}(-\infty, 0].
\end{align*}
Now $( \T_{\om , B} , \F_{\om , B})$ forms a torsion pair on $\coh(X)$
and let the abelian category $\B_{\om , B} = \langle \F_{\om , B}[1],
\T_{\om , B} \rangle \subset D^b(X)$ be the corresponding tilt of $\coh(X)$.
Define the central charge function $Z_{\om , B} : K(X) \to \C$ by
$Z_{\om , B}(E) = - \int_X e^{-B - i \omega} \ch(E)$.

Following \cite{BMT}, the tilt-slope $\nu_{\om , B}(E)$ of $E \in \B_{\om , B}$ is defined by
$$
\nu_{\om , B}(E) =
\begin{cases}
+\infty & \text{if } \omega^2 \ch^B_1(E) = 0 \\
\frac{\Im\,Z_{\om , B} (E)}{\omega^2 \ch^B_1(E)} & \text{otherwise}.
\end{cases}
$$
In \cite{BMT} the notion of $\nu_{\om , B}$-stability for objects in
$\B_{\om , B}$ is introduced in a similar way to $\mu_{\om ,
  B}$-stability for $\coh(X)$. Also it is proved that the abelian
category $\B_{\om , B}$ satisfies the H-N property with respect to
$\nu_{\om , B}$-stability. Then one can  define the slopes
$\nu^{+}_{\om , B}, \nu^{-}_{\om , B}$ for objects in $\B_{\om , B}$
and the subcategory $\HN^{\nu}_{\om, B}(I) \subset \B_{\om, B}$ for an
interval $I \subset \R\cup\{+\infty\}$.
The subcategories $\T_{\om , B}'$ and $\F_{\om , B}'$ of $\B_{\om, B}$ are defined by
\begin{align*}
\T_{\om , B}' = \HN^{\nu}_{\om, B}(0, +\infty], \ \ \
\F_{\om , B}' = \HN^{\nu}_{\om, B}(-\infty, 0].
\end{align*}
Then $( \T_{\om , B}' , \F_{\om , B}')$ forms a torsion pair on
$\B_{\om , B}$ and let the abelian category $\A_{\om , B} = \langle
\F_{\om , B}'[1],
\T_{\om , B}' \rangle \subset D^b(X)$ be the corresponding tilt.

%%%%%%%%%%%%%%%%%%%%%%%%
%%%%%%%%%%%%%%%%%%%%%%%%
\begin{conj} {\rm(\cite[Conjecture 3.2.6]{BMT})}
\label{prop1.1}
The pair $(Z_{\om , B}, \A_{\om , B} )$ is a Bridgeland stability condition on $D^b(X)$.
\end{conj}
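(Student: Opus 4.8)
The plan is to verify the three defining properties of a Bridgeland stability condition for the pair $(Z_{\om,B},\A_{\om,B})$: the stability-function (positivity) property of $Z_{\om,B}$ on the heart $\A_{\om,B}$, the Harder-Narasimhan property, and local finiteness (ideally the support property). Since $\A_{\om,B}$ is obtained as a double tilt of $\coh(X)$ it is already the heart of a bounded t-structure, and $Z_{\om,B}$ is manifestly a group homomorphism $K(X)\to\C$. Following \cite{BMT}, the Harder-Narasimhan property and local finiteness then follow by essentially formal arguments (Noetherianity of the tilted hearts together with a discreteness statement for the image of $Z_{\om,B}$) once the positivity property is in place. I would therefore reduce the whole statement to showing that $Z_{\om,B}(E)$ lies in the semi-closed upper half plane for every nonzero $E\in\A_{\om,B}$.

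For this, $\Im Z_{\om,B}(E)\ge 0$ holds automatically on $\A_{\om,B}$ by the construction of the double tilt, and the only objects that can violate positivity are those with $\Im Z_{\om,B}(E)=0$. Such objects are iterated extensions of shifts $E[1]$ of $\nu_{\om,B}$-stable objects $E\in\B_{\om,B}$ of tilt-slope zero, so by additivity of $Z_{\om,B}$ it suffices to check $\Re Z_{\om,B}(E[1])<0$, equivalently $\Re Z_{\om,B}(E)>0$, for each such $E$. This reduces to a bound relating $\ch^B_3(E)$ and $\ch^B_1(E)$, guaranteed by the \emph{strong} Bogomolov-Gieseker type inequality of Theorem \ref{prop4.6}; establishing that inequality is the substance of the proof.

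To prove it I would exploit a non-trivial Fourier-Mukai transform $\Phi_\E$. Since $(X,L)$ is principally polarized of Picard rank one, $\Phi_\E$ is identified with an element of $\SL(2,\Z)$, and the induced automorphism $\Phi_\E^{\SH}$ of $H^{2*}(X,\Q)$ is anti-diagonal with respect to suitable twisted Chern characters. Using this matrix description one chooses $\om',B'\in\NS_\R(X)$ so that $\Phi_\E\cdot Z_{\om,B}=\zeta Z_{\om',B'}$ with $\zeta\in\R^*$ (see Note \ref{prop3.2}), which forces $\Phi_\E$ to carry $\nu_{\om,B}$-stability to $\nu_{\om',B'}$-stability and hence, after a shift, to identify the double-tilt hearts $\A_{\om,B}$ and $\A_{\om',B'}$, as asserted in Theorem \ref{prop4.5}. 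The main obstacle lies precisely here: one must prove that $\Phi_\E$, suitably shifted, sends $\A_{\om,B}$ into $\A_{\om',B'}$. This is a WIT-type statement requiring tight cohomological control, namely that the cohomology objects $\Phi^k_{\A}(E)$ vanish outside a single degree; I expect to establish it through the spectral sequences set up earlier, tracking how the Harder-Narasimhan data of $E$ is transported by the anti-diagonal numerology.

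Granting the equivalence of hearts, the conclusion is short. An equivalence of abelian categories sends minimal (simple) objects to minimal objects, and the $\nu_{\om,B}$-stable slope-zero objects that control positivity are minimal in $\A_{\om,B}$ up to shift. For such an $E$ the image $\Phi_\E(E)$ is, up to shift, a minimal object of $\A_{\om',B'}$, whose top Chern character is constrained (for instance because it is a sheaf supported in low dimension, where the bound holds classically); transporting this constraint back through the anti-diagonal matrix yields the strong Bogomolov-Gieseker inequality for $E$. This gives the positivity of $Z_{\om,B}$ on $\A_{\om,B}$ and, together with the formal Harder-Narasimhan and finiteness arguments, completes the verification that $(Z_{\om,B},\A_{\om,B})$ is a Bridgeland stability condition.
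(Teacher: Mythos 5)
Your proposal follows essentially the same route as the paper: reduce the conjecture, via Noetherianity of $\A_{\om,B}$ and the formal arguments of \cite{BMT}, to the positivity statement $\Re\,Z_{\om,B}(E[1])<0$ for $E\in\SC_{\om,B}$, and then obtain this from the strong Bogomolov--Gieseker inequality, which is proved exactly as you outline --- by establishing the Fourier--Mukai equivalence of double-tilted hearts (Theorem \ref{prop4.5}) using the anti-diagonal matrix description of $\Phi_\E^{\SH}$, sending minimal objects to minimal objects, and transporting the resulting constraints (in the paper, the $\ch_1$-bounds of Proposition \ref{prop4.2} applied to the cohomology sheaves of the image, rather than a low-dimensional-support argument) back through the anti-diagonal numerology. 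This is the paper's argument culminating in Theorems \ref{prop4.6} and \ref{prop4.7}.
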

%%%%%%%%%%%%%%%%%%%%%%%%
%%%%%%%%%%%%%%%%%%%%%%%%
\begin{defn}
\begin{enumerate}[(i)]
\item Let $\SC_{\om , B}$ be the class of $\nu_{\om , B}$-stable
  objects  $E \in \B_{\om , B}$ with $\nu_{\om , B}(E) = 0$.
\item Let $\SM_{\om,B}$ be the class of objects $E \in \SC_{\om,B}$
  with $\Ext^1(\OO_x, E) = 0$ for any $x \in X$.
\end{enumerate}
\end{defn}
The objects in $\SM_{\om,B}[1]$  are minimal objects in $\A_{\om, B}$ (see \cite[Lemma 2.3]{MP}).
%%%%%%%%%%%%%%%%%%%%%%%%
%%%%%%%%%%%%%%%%%%%%%%%%

Let us assume $B \in \NS_{\Q}(X)$ and  $\om \in \NS_{\R}(X)$ is an
ample class with $\om^2$ is rational. Then similar to the proof of
\cite[Proposition 5.2.2]{BMT} one can show that the abelian category
$\A_{\om, B}$ is Noetherian. Therefore Conjecture \ref{prop1.1} is
equivalent to the following (see \cite[Corollary 5.2.4]{BMT}).
\begin{conj}{\rm(\cite[Conjecture 3.2.7]{BMT})}
\label{prop1.2}
Any $E \in \SC_{\om , B}$ satisfies the so-called \textbf{Bogomolov-Gieseker Type Inequality}:
$$
\Re\,Z_{\om , B} (E[1]) < 0, \text{ i.e. } \ch^B_3(E) <  \frac{\om^2}{2} \ch^B_1(E).
$$
\end{conj}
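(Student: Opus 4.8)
The plan is to establish the inequality in the setting of the paper---$(X,L)$ a principally polarized abelian threefold of Picard rank one---by transporting the problem through a non-trivial Fourier--Mukai transform. Since $\SM_{\om,B}[1]$ consists precisely of the tilt-slope-zero minimal objects of $\A_{\om,B}$, I would first reduce the claim for a general $E\in\SC_{\om,B}$ to the case $E\in\SM_{\om,B}$. If $\Ext^1(\OO_x,E)\neq 0$ for some $x\in X$, form the universal extension of $E$ by the corresponding skyscrapers to obtain $\widetilde E$ with $\Ext^1(\OO_x,\widetilde E)=0$ for all $x$; each $\OO_x$ contributes $+1$ to $\ch^B_3$ and $0$ to $\ch^B_1$, so the strict inequality for $\widetilde E\in\SM_{\om,B}$ yields the (a fortiori) inequality for $E$. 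It therefore suffices to prove $\ch^B_3(E)<\frac{\om^2}{2}\ch^B_1(E)$ for minimal objects.

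The heart of the argument is the numerical description of the induced transform $\Phi_\E^{\SH}$. For $(X,L)$ principally polarized of Picard rank one, any non-trivial $\Phi_\E$ acts on $H^{2*}(X,\Q)$ as an anti-diagonal matrix with respect to suitably twisted Chern characters, identified with the corresponding $f_\E\in\SL(2,\Z)$. Concretely this interchanges $(\ch^B_0,\ch^B_1)$ with $(\ch^{B'}_3,\ch^{B'}_2)$ for a new pair $(\om',B')$ fixed by the condition $\Im\,\zeta=0$ of Note \ref{prop3.2}. I would then invoke Theorem \ref{prop4.5}: $\Phi_\E$, suitably shifted, restricts to an exact equivalence of abelian categories $\A_{\om,B}\xrightarrow{\ \sim\ }\A_{\om',B'}$, both being double tilts of $\coh(X)$. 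Being an equivalence of abelian categories, it carries minimal objects to minimal objects, so a minimal $E[1]\in\A_{\om,B}$ is sent to a minimal object $\widehat E\in\A_{\om',B'}$. Applying the anti-diagonal matrix, the desired inequality $\ch^B_3(E)<\frac{\om^2}{2}\ch^B_1(E)$ becomes an inequality involving $\ch^{B'}_0(\widehat E)$ and $\ch^{B'}_2(\widehat E)$, of weak Bogomolov--Gieseker type, which holds for objects of $\A_{\om',B'}$ by the tilt-stability results already established in \cite{MP}. This is exactly the passage from the weak to the strong inequality, and assembling the steps gives Theorem \ref{prop4.6}, the assertion for our $(X,L)$.

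The main obstacle is the step establishing Theorem \ref{prop4.5}, i.e. that $\Phi_\E$ is genuinely an exact equivalence of the two double-tilt hearts and not merely a derived equivalence. This amounts to controlling the cohomological amplitude of $\Phi_\E$ with respect to the t-structures defining $\A_{\om,B}$ and $\A_{\om',B'}$: one must show that every object of $\A_{\om,B}$ is sent to a complex concentrated in a single degree. I would run this in two stages mirroring the two tilts, using $\Upsilon$-WIT-type vanishing (the subcategories $V^{\Upsilon}_{\coh(X)}(i)$ of the notation section) together with estimates on the $\mu_{\om,B}$- and $\nu_{\om,B}$-slopes of the images, and crucially the weak B--G inequality of \cite{MP}. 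The anti-diagonal matrix is what makes these slope bounds land inside the target heart $\A_{\om',B'}$ exactly; once the amplitude is pinned to a single degree on each tilt, exactness on hearts is formal and preservation of minimal objects is automatic.
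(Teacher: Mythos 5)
The statement you address is quoted in the paper as a conjecture for arbitrary projective threefolds; the paper itself proves it only for a principally polarized abelian threefold of Picard rank one, where it follows from the strong inequality of Theorem \ref{prop4.6}. Your proposal targets exactly that setting and follows exactly the paper's route: reduce to minimal objects via \cite[Proposition 2.9]{MP}, transport through the Fourier--Mukai equivalence of double-tilt hearts (Theorem \ref{prop4.5}, whose proof you correctly identify as the real work, carried out in Sections \ref{section5}--\ref{section6} by the two-stage slope and WIT-type estimates you describe), and convert the inequality on $\ch_3^B,\ch_1^B$ into one on the low-degree twisted Chern characters of the image using the anti-diagonal form of Theorem \ref{prop2.4} together with Proposition \ref{prop4.2}. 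In outline this is the paper's argument.

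One step in your sketch would fail as written. From ``minimal objects go to minimal objects'' you pass directly to the numerical estimate on $\widehat E$. But the minimal objects of $\A_{m'\ell,b'\ell}$ come in two kinds: shifts $F[1]$ with $F\in\SC_{m'\ell,b'\ell}$, to which Proposition \ref{prop4.2} applies, and simple objects of $\coh^{0}(X)\subset\T'_{m'\ell,b'\ell}$, to which it does not. For $E\cong\E^*_{X\times\{s\}}[1]$ the image under $\Up[1]$ is precisely of the latter kind, so the slope estimates on the $\coh(X)$-cohomologies of the image are vacuous there. The paper excludes these objects at the outset and verifies the inequality for them by direct computation with semi-homogeneous bundles (Example \ref{exmp4.1}); it then shows that for every other $E\in\SM_{m\ell,b\ell}$ the image cannot lie in $\coh^{0}(X)$ (else $E$ would be filtered by the $\E^*_{X\times\{s\}}[1]$) and that $\Ext^1(\OO_s,\Up[1](E))=0$, so the image lands back in $\SM_{m'\ell,b'\ell}$. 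This case division, and the accompanying check that the image is not merely minimal but of the form needed for Proposition \ref{prop4.2}, must be added for your argument to close.
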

%%%%%%%%%%%%%%%%%%%%%%%%
%%%%%%%%%%%%%%%%%%%%%%%%
Moreover in \cite{BMT} they proposed the following strong inequality for objects in $\SC_{\om,B}$.
\begin{conj}{\rm(\cite[Conjecture 1.3.1]{BMT})}
\label{prop1.3} 
Any $E \in \SC_{\om , B}$ satisfies the so-called \textbf{Strong Bogomolov-Gieseker Type Inequality}:
$$
\ch_3^B(E) \le \frac{\om^2}{18} \ch_1^B(E).
$$
\end{conj}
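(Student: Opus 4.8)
The plan is to reduce to the case of minimal objects and then exploit the fact that a non-trivial Fourier--Mukai transform interchanges the top and bottom of the Chern character. First note that a skyscraper $\OO_x$ has $\ch^B(\OO_x)=(0,0,0,1)$, $\nu_{\om,B}(\OO_x)=+\infty$ and $\Im\,Z_{\om,B}(\OO_x)=0$; hence forming the universal extension of a general $E\in\SC_{\om,B}$ by copies of $\OO_x$ produces a minimal object $\widetilde E\in\SM_{\om,B}$ with $\ch_1^B(\widetilde E)=\ch_1^B(E)$ and $\ch_3^B(\widetilde E)\ge\ch_3^B(E)$. Thus the inequality for $\SM_{\om,B}$ forces it for all of $\SC_{\om,B}$, and it suffices to treat minimal $E$. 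For such $E$ the condition $\nu_{\om,B}(E)=0$ records one linear relation, $\om\,\ch_2^B(E)=\tfrac{\om^3}{6}\ch_0^B(E)$, and \cite{MP} already supplies the weak bound $\ch_3^B(E)<\tfrac{\om^2}{2}\ch_1^B(E)$; the work is to sharpen the constant from $\tfrac12$ to $\tfrac1{18}$.

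Next I would make the homological transform explicit. For $(X,L)$ principally polarized of Picard rank one I would show that $\Phi_\E^{\SH}$ becomes, in suitably $B$-twisted Chern characters, an anti-diagonal matrix $\adiag(\cdots)$ corresponding to an element of $\SL(2,\Z)$. The essential consequence is that $\ch_0^B\leftrightarrow\ch_3^B$ and $\ch_1^B\leftrightarrow\ch_2^B$ up to signs and powers of $\ell$, so that the very quantity $\ch_3^B-\lambda\,\om^2\ch_1^B$ that governs the strong inequality is exchanged with a quantity of the same shape for the transformed object.

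Now fix the pairs $(\om,B)$ and $(\om',B')$ determined by $\Im\,\zeta=0$ in $\Phi_\E\cdot Z_{\om,B}=\zeta Z_{\om',B'}$, and invoke Theorem \ref{prop4.5}: $\Phi_\E$ is an exact equivalence of the double tilts $\A_{\om,B}$ and $\A_{\om',B'}$ up to shift. Since an equivalence of hearts preserves simple objects, the minimal object $E[1]\in\A_{\om,B}$ is carried, up to shift, to a minimal object of $\A_{\om',B'}$, that is, to $\widehat E[1]$ with $\widehat E\in\SM_{\om',B'}$. The weak bound of \cite{MP} applied to $\widehat E$ then reads $\ch_3^{B'}(\widehat E)<\tfrac{\om'^2}{2}\ch_1^{B'}(\widehat E)$, and pulling this back through the matrix yields a further inequality among the twisted Chern characters of $E$, independent of the weak inequality for $E$ itself.

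Finally I would combine the four data — the weak inequality for $E$, the transported weak inequality for $\widehat E$, and the two $\nu=0$ relations (for $E$ and for $\widehat E$) — using the explicit dependence of $(\om',B')$ on $(\om,B)$ coming from the matrix. An elementary optimisation (a discriminant-type argument in the resulting quadratic constraint) should then produce exactly $\ch_3^B(E)\le\tfrac{\om^2}{18}\ch_1^B(E)$, which is Theorem \ref{prop4.6} and hence Conjecture \ref{prop1.3} for this $X$. I expect the genuine obstacle to be Theorem \ref{prop4.5}: proving that $\Phi_\E$ is an equivalence of the \emph{doubly} tilted hearts. One must show that each object of $\A_{\om,B}$ has its $\Phi_\E$-cohomology concentrated in a single degree for the $\A_{\om',B'}$ t-structure, which requires delicate control of how tilt-stability and WIT-type vanishing interact with two successive tilts — considerably more involved than the single-tilt analysis carried out in the first paper.
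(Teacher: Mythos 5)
Your overall architecture matches the paper's: reduce to $\SM_{\om,B}$ via \cite[Proposition 2.9]{MP}, use the anti-diagonal form of $\Phi_\E^{\SH}$ in twisted Chern characters, and send minimal objects to minimal objects through the equivalence of double tilts (Theorem \ref{prop4.5}, which you rightly identify as the real work). But your final step contains a genuine gap. You propose to close the argument by applying the \emph{weak} B-G inequality to the transformed object $\widehat E$ and pulling it back through the matrix as a ``further inequality \ldots independent of the weak inequality for $E$ itself.'' It is not independent: the whole point of the choice of $(\om',B')$ is that $\Phi_\E\cdot Z_{\om,B}=\zeta Z_{\om',B'}$ with $\zeta$ a \emph{real} scalar, so $\Re\,Z_{\om',B'}(\widehat E[1])$ is a real multiple of $\Re\,Z_{\om,B}(E[1])$ and the transported weak inequality is literally the same inequality you started with. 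Concretely, writing $\ch^{x/y}(E)=(a_0,a_1,a_2,a_3)$ with $a_2=\lambda a_1$, both the weak inequality for $E$ and the transported weak inequality for $\widehat E$ reduce to $a_3<3\lambda^2 a_1-\lambda^3 a_0$, while the second $\nu=0$ relation is automatic given the first. No ``discriminant-type optimisation'' of these data can yield $a_3\le\lambda^2 a_1$ (take $a_0$ very negative and the constraint becomes vacuous).

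The ingredient you are missing is that the paper does \emph{not} use the conjectural weak inequality for the transform at all. It uses Proposition \ref{prop4.2}(iii): for a tilt-stable object $F=\Up[1](E)$ of zero tilt slope, the $\coh(X)$-cohomologies $F_{-1},F_0$ satisfy explicit bounds of the form $\ell^2\ch_1^{-w/y}(F_{-1})\le-\tfrac{1}{\lambda y^2}\ell^3\ch_0^{-w/y}(F_{-1})$ and $\ell^2\ch_1^{-w/y}(F_0)\ge 0$, which come from the classical Bogomolov--Gieseker inequality for the H-N factors together with the definition of the tilt $\B_{m'\ell,b'\ell}$ --- i.e.\ from $\ch_0$-versus-$\ch_1$ data of $F$. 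Since the anti-diagonal matrix exchanges $(\ch_0,\ch_1)$ of $F$ with $(\ch_3,\ch_2)$ of $E$, this is precisely where new control on $\ch_3(E)$ enters: $\ch_0^{-w/y}(F)=y^3a_3$, $\ch_1^{-w/y}(F)=-y\lambda a_1\ell$, and the bound above becomes $\lambda^2a_1\ge a_3$ on the nose. Your route, by contrast, pulls back degree-$3$-versus-degree-$1$ data of $F$ to degree-$0$-versus-degree-$2$ data of $E$, which says nothing about $\ch_3(E)$. A secondary omission: you must exclude (and verify directly, as in Example \ref{exmp4.1}) the exceptional minimal objects $\E^*_{X\times\{s\}}[1]$, since for those the transform lands in $\coh^0(X)$ rather than in $\SC_{\om',B'}$ and the argument above does not apply.
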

%%%%%%%%%%%%%%%%%%%%%%%%
%%%%%%%%%%%%%%%%%%%%%%%%
For any $E \in \SC_{\om,B}$, there exists $E' \in \SM_{\om,B}$  such that
$
0 \to E \to E' \to T \to 0
$
is a short exact sequence (SES for short) in $\B_{\om,B}$ for some $T \in \coh^0(X)$ (see
\cite[Proposition 2.9]{MP}). Therefore one only needs to check the
B-G (respectively, strong B-G) type inequality for objects  in $\SM_{\om,B}$.

%%%%%%%%%%%%%%%%%%%%%%%%
%%%%%%%%%%%%%%%%%%%%%%%%

%%%%%%%%%%%%%%%%%%%%%%%%%%%%%%%%%%%%%%%%%%%%%%%%%%%%%%%%%%%%%%%%%%%%%%%%%%%%%%%%%%%%%%%%%%%%%%%%
\subsection{Autoequivalences of abelian varieties}
%%%%%%%%%%%%%%%%%%%%%%%%%%%%%%%%%%%%%%%%%%%%%%%%%%%%%%%%%%%%%%%%%%%%%%%%%%%%%%%%%%%%%%%%%%%%%%%%
First of all we briefly introduce Fourier-Mukai theory 
(see \cite{BBR, Huy1} for further details). 
Let $X,Y$ be smooth projective varieties and let $p_i$, $i=1,2$ be the  projection
maps from $X \times Y$ to $X$ and $Y$, respectively.
The Fourier-Mukai functor (FM functor for short) $\Phi_{\E}^{X \to Y}:
D^b(X) \to D^b(Y)$ with kernel $\E \in D^b(X \times Y)$
is defined by
$$
\Phi_{\E}^{X \to Y}(-) = \textbf{R}p_{2*} (\E \stackrel{\textbf{L}}{\otimes} p_1^*(-)).
$$
When $\Phi_{\E}^{X \to Y}$ is an equivalence of the derived categories
it is called a Fourier-Mukai transform (FMT for short).
On the other hand Orlov's representability theorem (see \cite{Orl1})
says that any equivalence between $D^b(X)$ and $D^b(Y)$ is isomorphic
to $\Phi_{\E}^{X \to Y}$ for some $\E \in D^b(X \times Y)$.
Any FM functor $\Phi_{\E}: D^b(X) \to D^b(Y)$ induces a linear map
$\Phi^{\SH}_{\E} : H^{2*}(X, \Q)  \to   H^{2*}(Y, \Q)$ (sometimes
called the cohomological FM functor) and it is an isomorphism when $\Phi_{\E}$ is an FMT.

\begin{exmp}
\label{exmp1.1}
Let $(X, L)$ be a principally polarized abelian variety of dimension
$g$ with group operation $m:X\times X\to X$. Then the isogeny $\phi_L: X \to \HX$, $x \mapsto t_x^*L \otimes
L^{-1}$ is an isomorphism and also $\chi(L) = {\ell^g}/{g!} =1$.
In the rest of the paper let $\Phi : D^b(X) \to D^b(X)$ be the FMT
with the Poincar\'{e} line bundle $\Po = m^*L \otimes p_1^* L^{-1}
\otimes p_2^* L^{-1}$ on $X \times X$ as the kernel.
In \cite{Muk2} Mukai proved that
\begin{itemize}
\item $\Phi$ is an autoequivalence of the derived category $D^b(X)$,
\item $\Phi \circ \Phi \cong (-1)^* \id_{D^b(X)}[-g]$, and
\item $(L \circ \Phi)^3 \cong \id_{D^b(X)}[-g]$.
\end{itemize}
If we assume the Picard rank of $X$ is $1$ and write $\ch(E) = (a_0,  a_1 \ell ,  a_2
{\ell^2}/{2!}, \ldots,  a_g {\ell^g}/{g!})$,
then we have
$
\ch(\Phi(E))=\Phi^{\SH}(\ch(E)) =  (a_g,  -a_{g-1} \ell ,  a_{g-2}
   {\ell^2}/{2!}, \ldots,  (-1)^g a_0 {\ell^g}/{g!})
$ (see \cite[Lemma 9.23]{Huy1}).
\end{exmp}

Following the work of Orlov  the group $\Aut D^b(X)$ of FMTs from $X$ to $X$ can be described explicitly
as follows  (see
\cite{Orl2} and \cite[Chapter 9]{Huy1} for further
details).
Let $X,Y$ be two abelian varieties. Then one can write any morphism
$f: X \times \HX \to Y \times \widehat{Y}$ as a matrix
$
f =
\begin{pmatrix}
p & q \\
r & s
\end{pmatrix}
$
for some morphisms  $p : X \to Y$, $q : \HX \to Y$, $r : X \to
\widehat{Y}$ and $s : \HX \to \widehat{Y}$. These morphisms have duals: $\widehat
p:\widehat Y\to \HX$, $\widehat q:\widehat Y\to X$, $\widehat r:Y\to \HX$ and $\widehat
s:Y\to X$.
We associate a morphism $\widetilde{f}: Y \times \widehat{Y} \to X \times \HX$
to $f$ by setting
$
\widetilde{f} =
\begin{pmatrix}
\widehat{s} & -\widehat{q} \\
-\widehat{r} & \widehat{p}
\end{pmatrix}
$.
Then  $f$ is said to be isometric if it is an isomorphism and its
inverse $f^{-1} \cong \widetilde{f}$. When $Y=X$, we denote the group of
all isometric automorphisms of $X \times \HX$ by $U( X \times \HX)$.

Let $\Phi_{\E}^{X \to Y}$ be an FMT between two abelian varieties $X$
and $Y$ with kernel $\E \in D^b(X \times Y)$.
Let us define the map $\mu_X: X \times X \to X \times X$ by
$\mu_X(x_1,x_2) = (x_1, m(x_1, x_2))$. Let $P_X = p_{14}^*
\OO_{\Delta} \otimes p_{23}^*\Po_{X}$, where $p_{ij}$ are the
projection maps from $(X \times \HX) \times (X \times X)$,
$\OO_{\Delta}$ is the structure sheaf on the diagonal $\Delta \subset
X \times X$, and $\Po_X$ is the Poincar\'{e} bundle on $\HX \times X$.

Let $\F \in D^b(X \times Y)$ be an object such that $\Phi_{\F}^{Y \to
  X} \cong \left( \Phi_{\E}^{X \to Y} \right)^{-1}$ and let $\operatorname{Ad}_{\E}$
be the FMT from $X \times X$ to $Y \times Y$ with
kernel $\F \boxtimes \E$. Then it satisfies
$$
\Phi_{\operatorname{Ad}_{\E}(\mathcal{G})}^{Y \to Y} \cong \Phi_{\E}^{X \to Y} \circ \Phi_{\mathcal{G}}^{X \to X} \circ  \left( \Phi_{\E}^{X \to Y} \right)^{-1}
$$
for any $\mathcal{G} \in D^b(X \times X)$ (see \cite{Orl2}). Now define the
equivalence $F_{\E} : D^b(X \times \HX) \to D^b(Y \times \widehat{Y})$
by
$$
F_{\E} = \left(\Phi_{P_{Y}}^{(Y \times \widehat{Y}) \to (Y \times Y)} \right)^{-1} \circ \left(\textbf{R}\mu_{Y*}\right)^{-1} \circ \operatorname{Ad}_{\E}  \circ \ \textbf{R}\mu_{X*} \circ \Phi_{P_{X}}^{(X \times \HX) \to (X \times X)},
$$
so that $F_{\E}$ fits into the following commutative diagram (see \cite{Huy1, Orl2}).
$$
\xymatrixcolsep{5pc}
\xymatrixrowsep{2.5pc}
\xymatrix{
D^b(X \times \HX) \ar[d]_{ \Phi_{P_{X}}^{(X \times \HX) \to (X \times X)}} \ar[r]^{F_{\E}} &   D^b(Y \times \widehat{Y}) \ar[d]^{ \Phi_{P_{Y}}^{(Y \times \widehat{Y}) \to (Y \times Y)}} \\
D^b(X \times X) \ar[d]_{ \textbf{R}\mu_{X*}}  &  D^b(Y \times Y)\ar[d]^{ \textbf{R}\mu_{Y*}} \\
D^b(X \times X) \ar[r]^{\operatorname{Ad}_{\E}} & D^b(Y \times Y)
}
$$
The equivalence $F_{\E}$ can also be expressed in a simple form as follows.
\begin{lem}
\label{prop1.4} {\rm(\cite[Proposition~9.39]{Huy1}, \cite{Orl2})}
The equivalence $F_{\E}$ is isomorphic to $f_{\E*}(-) \otimes N_{\E} $
for some line bundle  $N_{\E}$ on $Y \times \widehat{Y}$ and isometric
isomorphism $f_{\E}: X \times \HX \to Y \times \widehat{Y}$.
Moreover, $f_{\E}(s , \widehat{s}) = (t, \widehat{t})$ if and only if
$\Phi_{(t, \widehat{t})} \circ \Phi_{\E}^{X \to Y} \cong \Phi_{\E}^{X \to
  Y} \circ \Phi_{(s, \widehat{s})}$. Here $\Phi_{(z, \widehat{z})} = t_{z*}(-)
\otimes \Po_{\widehat{z}}$ and $\Po_{\widehat{z}}$ is the restriction of the
Poincar\'{e} line bundle on the product $Z \times \widehat{Z}$.
\end{lem}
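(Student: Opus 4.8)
The plan is to reduce the statement to the structural fact that an exact equivalence between derived categories of abelian varieties which carries every skyscraper sheaf to a (shifted) skyscraper sheaf must be of the form $g_*(-) \otimes N[m]$ for an isomorphism $g$, a line bundle $N$ and an integer $m$; this is the standard point-to-point criterion (see \cite[Chapter~9]{Huy1}). Thus the real work is to show that $F_{\E}$ sends each point sheaf $\OO_{(s,\hat{s})}$ on $X \times \HX$ to a point sheaf on $Y \times \hat{Y}$, to identify the resulting point as $(t,\hat{t}) = f_{\E}(s,\hat{s})$ through the asserted commutation relation, and finally to check that the induced map $f_{\E}$ is isometric and that the global shift vanishes.

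First I would compute the image of a skyscraper along the defining commutative diagram. Tracing the left-hand and bottom arrows, I would show that $\RR\mu_{X*} \circ \Phi_{P_X}^{(X \times \HX) \to (X \times X)}$ takes $\OO_{(s,\hat{s})}$ to the kernel on $X \times X$ of the tautological autoequivalence $\Phi_{(s,\hat{s})} = t_{s*}(-) \otimes \Po_{\hat{s}}$ of $D^b(X)$. This is a direct unwinding of $P_X = p_{14}^*\OO_{\Delta} \otimes p_{23}^*\Po_X$ and of $\mu_X(x_1,x_2) = (x_1, m(x_1,x_2))$: the functor $\Phi_{P_X}$ produces $\Po_{\hat{s}} \boxtimes \OO_s$, whose support $X \times \{s\}$ is carried by $\RR\mu_{X*}$ onto the graph of translation by $s$, so that the result is exactly the kernel $\OO_{\Gamma} \otimes \Po_{\hat{s}}$ (up to the harmless $\Pic^0$-invariance $t_s^*\Po_{\hat{s}} \cong \Po_{\hat{s}}$). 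Applying the conjugation formula satisfied by $\operatorname{Ad}_{\E}$, this kernel is carried to the kernel of $\Phi_{\E}^{X \to Y} \circ \Phi_{(s,\hat{s})} \circ (\Phi_{\E}^{X \to Y})^{-1}$ on $D^b(Y)$, and running the right-hand arrows backwards returns a point sheaf on $Y \times \hat{Y}$, \emph{provided} this conjugated functor is again of tautological type $\Phi_{(t,\hat{t})}$.

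Establishing that last provision is the crux, and I expect it to be the main obstacle. The key is that the subgroup $\{\Phi_{(z,\hat{z})}\}$ of autoequivalences is stable under conjugation by any FMT, because it is characterised intrinsically as the image of $X \times \HX$ acting by translation and twist; equivalently, these are precisely the autoequivalences sending skyscrapers to skyscrapers. Granting this, $\Phi_{\E} \circ \Phi_{(s,\hat{s})} \circ \Phi_{\E}^{-1} \cong \Phi_{(t,\hat{t})}$ for a unique $(t,\hat{t})$, which is exactly the relation $\Phi_{(t,\hat{t})} \circ \Phi_{\E}^{X \to Y} \cong \Phi_{\E}^{X \to Y} \circ \Phi_{(s,\hat{s})}$ in the statement; setting $f_{\E}(s,\hat{s}) = (t,\hat{t})$ defines the map and proves the ``moreover'' clause. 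Since conjugation respects composition, $f_{\E}$ is a homomorphism of abelian varieties, and it is bijective because $\Phi_{\E}$ is invertible.

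It remains to see that $f_{\E}$ is \emph{isometric} and that no shift survives. The isometry condition $f_{\E}^{-1} \cong \widetilde{f_{\E}}$ I would read off from the Heisenberg-type relations among the $\Phi_{(z,\hat{z})}$: their composition law carries a cocycle governed by the canonical pairing on $X \times \HX$, and conjugation by $\Phi_{\E}$ must preserve this pairing, which is precisely the defining condition for membership in $U(X \times \HX)$. For the shift, I would track the single cohomological degree of $F_{\E}(\OO_{(s,\hat{s})})$ through the diagram and verify, after the chosen normalisations of the component functors, that it is concentrated in degree $0$, so that $F_{\E} \cong f_{\E *}(-) \otimes N_{\E}$ with $N_{\E}$ the line bundle recording the twist accumulated along the composition. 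Combining these observations with the point-to-point structure theorem yields the asserted form of $F_{\E}$.
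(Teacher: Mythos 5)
The paper itself offers no proof of this lemma: it is quoted directly from \cite[Prop.~9.39]{Huy1} and \cite{Orl2}, so your proposal has to be measured against the argument given there. Your reduction to the point-to-point criterion and your diagram chase are correct and do match the opening of that argument: $\RR\mu_{X*}\circ\Phi_{P_X}$ carries $\OO_{(s,\hat s)}$ to the kernel of $\Phi_{(s,\hat s)}$ (namely $\Po_{\hat s}$ supported on the graph of $t_s$), and $\operatorname{Ad}_{\E}$ carries this to the kernel of $\Phi_{\E}^{X\to Y}\circ\Phi_{(s,\hat s)}\circ\bigl(\Phi_{\E}^{X\to Y}\bigr)^{-1}$.

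The genuine gap sits exactly at the step you yourself flag as the crux. You justify the stability of $\{\Phi_{(z,\hat z)}\}$ under conjugation by saying it is ``characterised intrinsically as the image of $X\times\HX$ acting by translation and twist; equivalently, these are precisely the autoequivalences sending skyscrapers to skyscrapers.'' Neither half works. The first description refers to the geometry of $X$ itself and is not a priori preserved by an arbitrary derived equivalence, so it gives no conjugation-invariance for free. The second is simply false: $(-1)^*$, any group automorphism, and $(-)\otimes M$ for $M$ of arbitrary degree all send skyscrapers to skyscrapers, yet only translations twisted by $\Pic^0$-bundles are of the form $\Phi_{(z,\hat z)}$ (compare Lemma \ref{prop1.5}, where the kernel of $\sigma_X$ is cut out by the additional requirement of preserving $\Pic^0$ up to shift). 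And even with the corrected characterisation you would still have to prove that the conjugate sends skyscrapers to skyscrapers, which is precisely the point at issue. The cited proof closes this with a deformation argument absent from your sketch: the kernels $\operatorname{Ad}_{\E}$ applied to the kernels of $\Phi_{(s,\hat s)}$ form a family of simple objects over the connected variety $X\times\HX$, with constant cohomology class (as $\Phi_{(s,\hat s)}$ acts trivially on $H^{*}(X,\Q)$) and equal to $\OO_{\Delta_Y}$ at the origin; one then shows the locus where the member has the required shape is open and closed. Some argument of this kind is indispensable, and your isometry step (``Heisenberg-type relations'') would likewise need to be replaced by the actual computation of $f$ for the adjoint kernel together with $f_{\mathcal G}=f_{\F}\circ f_{\E}$. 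As it stands, the proposal is an accurate outline of what must be proved rather than a proof.
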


\begin{exmp}
\label{exmp1.2}
Let $(X, L)$ be a principally polarized abelian variety. The following
examples are important in this paper (see \cite[Examples~9.38]{Huy1}). Here
$\delta: X \to X \times X$ is the diagonal embedding.
\begin{center}
\begin{spacing}{1.5}
\begin{tabular}{ p{4.5cm} p{4.5cm} p{2cm}}
\toprule
$\Phi_\E$ & $f_\E$     & $N_\E$   \\
\midrule
$[1]= \Phi_{\OO_{\Delta}[1]}^{X \to X}$   &
      $f_{[1]}=\id_{X \times \HX}$    &
            $ \OO_{X \times \HX} $   \\
$ \Phi_{(s, \widehat{s})} = t_{s*}(-) \otimes \Po_{\widehat{s}}$ &
     $f_{(s, \widehat{s})}=\id_{X \times \HX}$    &
           $\Po_{\widehat{s}} \boxtimes \Po_{s}^{*}$ \\
 $\Phi  = \Phi_{\Po}^{X \to X}$  &
       $f_{\Po} = \begin{pmatrix}  0 & -\phi_L^{-1} \\  \phi_L & 0   \end{pmatrix} $ &
         $\Po_X$       \\
$(-)\otimes L  = \Phi_{\delta_* L}^{X \to X}$ &
    $f_{\delta_* L} = \begin{pmatrix}  1 & 0 \\ -\phi_L & 1  \end{pmatrix} $ &
        $L \boxtimes \OO_{\HX}$ \\
\bottomrule
\end{tabular}
\end{spacing}
\end{center}
\end{exmp}

Let $X,Y,Z$ be abelian varieties and let $\Phi_{\E}^{X \to Y}$,
$\Phi_{\mathcal{F}}^{Y \to Z}$, $\Phi_{\mathcal{G}}^{X \to Z}$ be FMTs
such that $\Phi_{\mathcal{G}}^{X \to Z} \cong \Phi_{\mathcal{F}}^{Y
  \to Z} \circ \Phi_{\E}^{X \to Y}$. Then one can show that
$f_{\mathcal{G}} \cong f_{\F} \circ f_{\E}$ and $N_{\mathcal{G}} \cong
N_{\F} \otimes f_{\F*} N_{\E}$. So we have a well defined group
homomorphism
$$
\sigma_X : \Aut D^b(X) \to U(X \times \HX), \ \Phi_{\E} \mapsto f_{\E}.
$$

\begin{lem}
\label{prop1.5}{\rm(\cite[Proposition~9.55]{Huy1})}
The map $\sigma_X$ is an epimorphism and its kernel consists of
autoequivalences $\Phi_{(s, \widehat{s})}[k]$ where $s \in X$, $\widehat{s}
\in \HX$ and $k \in \Z$. So $\ker \sigma_X \cong \Z \oplus (X \times
\HX)$.
\end{lem}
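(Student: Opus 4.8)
The plan is to treat the two assertions—that $\sigma_X$ is surjective and that its kernel is exactly $\{\Phi_{(s,\hat{s})}[k]\}$—separately, since $\sigma_X$ is already known to be a well-defined group homomorphism. I first compute the kernel. Because $\sigma_X(\Phi_{(s,\hat{s})}) = f_{(s,\hat{s})} = \id$ and $\sigma_X([1]) = f_{[1]} = \id$ by Example \ref{exmp1.2}, and $\sigma_X$ is a homomorphism, every $\Phi_{(s,\hat{s})}[k]$ lies in $\ker\sigma_X$, so one inclusion is immediate. For the reverse, suppose $f_{\E} = \id_{X \times \HX}$. By Lemma \ref{prop1.4} this is equivalent to $\Phi_{\E}$ commuting up to isomorphism with every $\Phi_{(s,\hat{s})} = t_{s*}(-)\otimes \Po_{\hat{s}}$, that is, with all translations and all twists by $\Pic^0(X)$. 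The key step is to deduce that such a $\Phi_{\E}$ sends skyscraper sheaves to shifts of skyscraper sheaves. Taking $s=0$ gives $\Phi_{\E}(\OO_x)\otimes\Po_{\hat{s}} \cong \Phi_{\E}(\OO_x \otimes \Po_{\hat{s}}) \cong \Phi_{\E}(\OO_x)$ for all $\hat{s}$; applying the Poincaré transform $\Phi$ and the exchange rule $\Phi((-)\otimes\Po_{\hat{s}}) \cong t_{\hat{s}}^*\Phi(-)$ turns this into translation-invariance of $\Phi(\Phi_{\E}(\OO_x))$, whose cohomology sheaves are then homogeneous, so by Mukai's correspondence $\Phi_{\E}(\OO_x)$ is supported on points. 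Taking $\hat{s}=0$ gives $t_{s*}\Phi_{\E}(\OO_x) \cong \Phi_{\E}(\OO_{x+s})$, which forces the support to move by a translation $t_s$ with a uniform shift $k$; since $\Phi_{\E}$ is an equivalence it preserves simple objects, so the skyscraper has length one. A standard rigidity argument for kernels whose fibrewise restrictions are length-one skyscrapers then identifies $\E$ with a line bundle on the graph of $t_s$ twisted by a $\Pic^0$-class, i.e. $\Phi_{\E} \cong \Phi_{(s,\hat{s})}[k]$. Finally, the composition rule $\Phi_{(s_1,\hat{s}_1)}\circ\Phi_{(s_2,\hat{s}_2)} \cong \Phi_{(s_1+s_2,\hat{s}_1+\hat{s}_2)}$, together with the centrality of the shift, shows $(k,s,\hat{s}) \mapsto \Phi_{(s,\hat{s})}[k]$ is a group isomorphism $\Z \oplus (X \times \HX) \xrightarrow{\sim} \ker\sigma_X$.

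For surjectivity I would realize a generating set of $U(X \times \HX)$ inside the image of $\sigma_X$. From Example \ref{exmp1.2} the shear $\begin{pmatrix} 1 & 0 \\ -\phi_L & 1 \end{pmatrix}$ and the ``Weyl'' element $\begin{pmatrix} 0 & -\phi_L^{-1} \\ \phi_L & 0 \end{pmatrix}$ are hit by $(-)\otimes L$ and the Poincaré transform $\Phi$; more generally any $\begin{pmatrix} 1 & 0 \\ \phi_{\mathcal{M}} & 1 \end{pmatrix}$ with $\mathcal{M}$ a line bundle is realized by $(-)\otimes\mathcal{M}$, and any $\begin{pmatrix} \psi & 0 \\ 0 & \hat{\psi}^{-1} \end{pmatrix}$ with $\psi \in \Aut(X)$ is realized by the pushforward $\psi_*$. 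I would then invoke the structural fact (a Bruhat- or Gaussian-elimination argument for $U(X \times \HX)$, analogous to the generation of $\SL(2,\Z)$ by elementary and Weyl matrices) that these three families generate all of $U(X \times \HX)$: given $f = \begin{pmatrix} p & q \\ r & s \end{pmatrix}$, one uses the Weyl element to arrange that the relevant corner is an isogeny and then clears it with shears and automorphisms, the isometry relation $f^{-1}\cong\tilde{f}$ supplying the symmetry needed for the shears to come from genuine line bundles. Since $\sigma_X$ is a homomorphism, its image contains all products of these generators, hence all of $U(X \times \HX)$.

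The main obstacle is surjectivity, and within it the generation statement for $U(X \times \HX)$: realizing the individual elementary isometries is routine from Example \ref{exmp1.2}, but proving that they generate the entire isometry group requires the careful symplectic-style elimination, keeping track of the symmetry constraints imposed by $f^{-1}\cong\tilde{f}$. On the kernel side the one genuinely non-formal input is the classification step—that an autoequivalence commuting with all translations and all $\Pic^0$-twists must send points to points up to a uniform shift—which rests on Mukai's correspondence between finite-length sheaves and homogeneous bundles.
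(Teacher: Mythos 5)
The paper offers no proof of this lemma: it is quoted verbatim from \cite[Prop.~9.55]{Huy1}, i.e.\ Orlov's theorem, so your reconstruction has to be measured against that argument. Your treatment of the kernel is essentially the standard one and is sound in outline: $f_\E=\id$ translates, via Lemma \ref{prop1.4}, into commutation with every $\Phi_{(s,\hat s)}$; invariance of $\Phi_\E(\OO_x)$ under all $\Pic^0$-twists, pushed through the Poincar\'e transform, gives translation-invariant (hence homogeneous) cohomology sheaves on $\HX$ and therefore zero-dimensional cohomology for $\Phi_\E(\OO_x)$ itself. One correction: ``$\Phi_\E$ preserves simple objects'' is not a valid justification, since a derived equivalence does not preserve the heart $\coh(X)$; the right input is that $\Hom(\Phi_\E(\OO_x),\Phi_\E(\OO_x))=\C$ and $\Hom(\Phi_\E(\OO_x),\Phi_\E(\OO_x)[i])=0$ for $i<0$, which together with zero-dimensional support force a shifted length-one skyscraper (the point-like object criterion). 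With that repair the identification $\Phi_\E\cong\Phi_{(s,\hat s)}[k]$ and the group isomorphism $\ker\sigma_X\cong\Z\oplus(X\times\HX)$ go through.

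The genuine gap is in surjectivity. You propose to generate $U(X\times\HX)$ by shears, diagonal automorphisms and a Weyl element. First, the lemma is stated for an arbitrary abelian variety $X$, where no Weyl element need exist: the matrix $\begin{pmatrix} 0 & -\phi_L^{-1} \\ \phi_L & 0 \end{pmatrix}$ presupposes an isomorphism $X\cong\HX$ coming from a principal polarization. Second, even when such an element is available, the assertion that these elementary isometries generate $U(X\times\HX)$ is left unproved and is genuinely delicate: $U(X\times\HX)$ behaves like an $\SL_2$-type group over the endomorphism algebra of $X$, the entries to be ``cleared'' are homomorphisms of abelian varieties rather than units, and generation by elementary matrices over such rings can fail. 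Orlov's proof does not proceed through a small generating set at all: he shows directly that every isometry whose upper-right block $q:\HX\to X$ is an isogeny equals $f_\E$ for an explicitly constructed semi-homogeneous bundle $\E$ on $X\times X$ (this is where Mukai's theory of semi-homogeneous bundles carries the weight), and that an arbitrary isometry is a composite of two isometries of this kind. That construction is the substance of surjectivity and is absent from your proposal. In the only case the rest of the paper actually uses --- $X$ principally polarized of Picard rank one, where $U(X\times\HX)\cong\SL(2,\Z)$ is generated by the images of $\Phi$ and $(-)\otimes L$ --- your generator argument does close up, but it does not prove the lemma as stated.
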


\begin{notation}
\rm
Assume  $(X,L)$ is a principally polarized abelian variety. 
Let $\widetilde{\SL(2,\Z)}$ be the central $\Z$-extension of the group
$\SL(2, \Z)$ generated by the FMTs $\Phi$, $(-)\otimes L$ and $[1]$
as a subgroup of $\Aut D^b(X)$.
So $(X \times \HX) \rtimes \widetilde{\SL(2,\Z)}$ is a subgroup
of $\Aut D^b(X)$. 
Consequently, we have the following diagram (see \cite[Chapter 9]{Huy1}):
$$
\xymatrixcolsep{2.25pc}
\xymatrixrowsep{2.25pc}
\xymatrix{
0 \ar[r] & \Z \oplus (X \times \HX)  \ar[r]  &  \Aut D^b(X)   \ar[r]  &  U(X \times \HX) \ar[r]  & 1 \\
0 \ar[r]  & \Z \oplus (X \times \HX) \ar[r] \ar@{=}[u]  &  (X \times \HX) \rtimes \widetilde{\SL(2,\Z)} \ar[r] \ar@{^{(}->}[u]  &   \SL(2,\Z) \ar[r] \ar@{^{(}->}[u] & 1.  \\
}
$$
The  isometric
automorphism of any FMT in $(X \times \HX) \rtimes \widetilde{\SL(2,\Z)}$ is of the form
$$
 \begin{pmatrix}
x & y \phi_L^{-1} \\
z \phi_L & w
\end{pmatrix}
$$ 
for some $x,y,z, w \in \Z$ satisfying $xw-yz =1$. In
the rest of the paper we abuse notation by dropping $\phi_L,
\phi_L^{-1}$ from this matrix. In this way we canonically identify
such isometric automorphisms  with elements of $ \SL(2,\Z)$.

%When $\End(X) \cong \Z$, it is known that $U(X \times \HX) \cong \SL(2,\Z)$, and so
%$\Aut D^b(X) \cong  (X \times \HX) \rtimes \widetilde{\SL(2,\Z)}$. See \cite[Example 4.16]{Orl2} for further details.
\end{notation}
%%%%%%%%%%%%%%%%%%%%%%%%%%%%%%%%%%%%%%%%%%%%%%%%%%%%%%%%%%%%%%%%%%%%%%%%%%%%%%%%%%%%%%%%%%%%%%%%
%%%%%%%%%%%%%%%%%%%%%%%%%%%%%%%%%%%%%%%%%%%%%%%%%%%%%%%%%%%%%%%%%%%%%%%%%%%%%%%%%%%%%%%%%%%%%%%%
\section{Matrix Representations of $\GL(2,\R)$ and Induced FMTs on $ H^{2*}(X, \Q)$}
\label{section2}
%%%%%%%%%%%%%%%%%%%%%%%%%%%%%%%%%%%%%%%%%%%%%%%%%%%%%%%%%%%%%%%%%%%%%%%%%%%%%%%%%%%%%%%%%%%%%%%%
\subsection{Finite dimensional matrix representations of $\GL(2,\R)$}
%%%%%%%%%%%%%%%%%%%%%%%%%%%%%%%%%%%%%%%%%%%%%%%%%%%%%%%%%%%%%%%%%%%%%%%%%%%%%%%%%%%%%%%%%%%%%%%%
Following \cite{Knapp}, we explicitly construct a variant of the
symmetric power representation of all dimensions ($\ge 2$) of $\GL(2, \R)$.

For $k \ge 2$, let $V_k$ be the vector space of  homogeneous
polynomials over $\R$ in variables $u_1, u_2$ of degree $k$.
Then $V_k = \bigoplus_{r=0}^k \R \left( u_1^{k-r} u_2^r \right)$.
So the set
$$
\Omega = \left\{ u_1^k, - \binom{k}{1} u_1^{k-1} u_2, \ldots, (-1)^r \binom{k}{r} u_1^{k-r} u_2^r , \ldots, (-1)^k u_2^k \right\}
$$
is a basis of $V_k$.
Here $\binom{k}{r} =
\begin{cases}
\frac{k !}{r ! (k-r)!} & \text{ if } 0 \le r \le k,\\
0 & \text{ otherwise}.
\end{cases}
$

We have $\dim_{\R} V_k  = k+1$.
Let us define the map $\rho^{(k)} : \GL(2, \R) \to \GL(V_k)$ by
$$
\rho^{(k)}(X) \left( Q
\begin{pmatrix}
 u_1 \\
 u_2
\end{pmatrix} \right) =
 Q  \left( X^T
\begin{pmatrix}
 u_1 \\
 u_2
\end{pmatrix} \right),
$$
for $X \in \GL(2, \R)$ and $Q
\begin{pmatrix}
 u_1 \\
 u_2
\end{pmatrix} \in V_k$.
Then one can easily check that $\rho^{(k)}$ is a  $(k+1)$-dimensional
linear representation of $\GL(2, \R)$. We now explicitly compute the
matrix representation of $\rho^{(k)}$ with respect to the basis
$\Omega$. Let $X = \begin{pmatrix}
 x & y \\
 z & w
\end{pmatrix} \in \GL(2, \R)$ and let $a^{(k)}_{m,n}(x,y,z,w)$ be the
$(m,n)$-entry of $\rho^{(k)}(X)$.
By definition
\begin{multline*}
(-1)^{n-1} \binom{k}{n-1} (x u_1 + z u_2)^{k-n+1} (y u_1 + w u_2)^{n-1} \\
=  \ldots + a^{(k)}_{m,n} (-1)^{m-1}  \binom{k}{m-1} u_1^{k-m+1} u_2^{m-1} + \ldots.
\end{multline*}
By setting $\lambda = k-m-i+2$, we have the following.

\begin{prop}
\label{prop2.1}
The $(m,n)$-entry  $a_{m,n}^{(k)}(x,y,z,w)$ of $\rho^{(k)}
\begin{pmatrix}
 x & y \\
 z & w
\end{pmatrix}$ is
\begin{align*}
(-1)^{n-m} \sum_{\lambda \in \Z } \binom{k-m+1}{\lambda-1}
  \binom{m-1}{n-\lambda}  x^{k - m- \lambda + 2} y^{\lambda -1}
  z^{m-n+\lambda -1} w^{n- \lambda}.
\end{align*}
\end{prop}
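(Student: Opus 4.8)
The plan is to read off $a_{m,n}^{(k)}$ directly from the displayed defining identity, by expanding the two factors on the left with the binomial theorem and comparing the coefficient of $u_1^{k-m+1}u_2^{m-1}$ on both sides. Since $X^T\begin{pmatrix}u_1\\u_2\end{pmatrix}=\begin{pmatrix}xu_1+zu_2\\ yu_1+wu_2\end{pmatrix}$, the $n$-th basis vector is sent to $(-1)^{n-1}\binom{k}{n-1}(xu_1+zu_2)^{k-n+1}(yu_1+wu_2)^{n-1}$, and the key organizing remark is that the sign $(-1)^{n-1}$ and the factor $\binom{k}{n-1}$ have already been stripped off in defining $\Omega$, so the expansion of the two factors contributes no further signs.

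First I would expand
\[
(xu_1+zu_2)^{k-n+1}=\sum_i\binom{k-n+1}{i}x^iz^{k-n+1-i}u_1^iu_2^{k-n+1-i}
\]
and similarly $(yu_1+wu_2)^{n-1}=\sum_j\binom{n-1}{j}y^jw^{n-1-j}u_1^ju_2^{n-1-j}$, where $i$ records the exponent of $x$ (equivalently of $u_1$) from the first factor. Multiplying, the monomial $u_1^{k-m+1}u_2^{m-1}$ is picked out exactly by the constraint $i+j=k-m+1$, and the substitution $\lambda=k-m-i+2$ (so that $i=k-m-\lambda+2$ and $j=\lambda-1$) turns the four exponents into precisely $x^{k-m-\lambda+2}$, $z^{m-n+\lambda-1}$, $y^{\lambda-1}$, $w^{n-\lambda}$, matching the statement. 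Letting $\lambda$ range over all of $\Z$ is harmless once $\binom{a}{b}$ is given its standard convention of vanishing unless $0\le b\le a$ (as fixed in the statement), which automatically restricts the sum to the admissible values of $i$.

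Dividing by the prefactor $(-1)^{m-1}\binom{k}{m-1}$ attached to the $m$-th basis vector on the right then isolates
\[
a_{m,n}^{(k)}=(-1)^{n-m}\,\frac{\binom{k}{n-1}}{\binom{k}{m-1}}\sum_\lambda\binom{k-n+1}{k-m-\lambda+2}\binom{n-1}{\lambda-1}\,x^{k-m-\lambda+2}y^{\lambda-1}z^{m-n+\lambda-1}w^{n-\lambda},
\]
the sign $(-1)^{n-m}$ coming from the ratio $(-1)^{n-1}/(-1)^{m-1}$. The only remaining, and main, step is then the purely combinatorial identity
\[
\frac{\binom{k}{n-1}}{\binom{k}{m-1}}\binom{k-n+1}{k-m-\lambda+2}\binom{n-1}{\lambda-1}=\binom{k-m+1}{\lambda-1}\binom{m-1}{n-\lambda},
\]
which I would prove by writing every binomial coefficient in factorials: both sides reduce to $\dfrac{(m-1)!\,(k-m+1)!}{(\lambda-1)!\,(k-m-\lambda+2)!\,(n-\lambda)!\,(m-n+\lambda-1)!}$ once the common factors $(n-1)!$ and $(k-n+1)!$ cancel. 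Substituting this into the previous display gives the claimed formula. The whole argument is elementary bookkeeping; the only place demanding genuine care is keeping the index conventions consistent so that the single substitution $\lambda=k-m-i+2$ delivers all four exponents and the two binomial coefficients at once, and verifying the factorial identity without off-by-one slips in the arguments $k-m-\lambda+2$ and $m-n+\lambda-1$.
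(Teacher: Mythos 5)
Your proof is correct and follows essentially the same route as the paper: expand the defining identity $(-1)^{n-1}\binom{k}{n-1}(xu_1+zu_2)^{k-n+1}(yu_1+wu_2)^{n-1}=\sum_m a^{(k)}_{m,n}(-1)^{m-1}\binom{k}{m-1}u_1^{k-m+1}u_2^{m-1}$ by the binomial theorem and perform the substitution $\lambda=k-m-i+2$, which is exactly what the paper does. The only addition is that you explicitly verify the binomial-coefficient identity $\frac{\binom{k}{n-1}}{\binom{k}{m-1}}\binom{k-n+1}{k-m-\lambda+2}\binom{n-1}{\lambda-1}=\binom{k-m+1}{\lambda-1}\binom{m-1}{n-\lambda}$, which the paper leaves implicit; your factorial computation of it is correct.
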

Here $a^{(k)}_{m,n} (x,y,z,w)$ are polynomials of $x$, $y$, $z$, $w$
with coefficients from $\Z$. Therefore $\rho^{(k)}(\SL(2, \Z)) \subset
\GL(k+1, \Z)$.

%%%%%%%%%%%%%%%%%%%%%%%%%%%%%%%%%%%%%%%%%%%%%%%%%%%%%%%%%%%%%%%%%%%%%%%%%%%%%%%%%%%%%%%%%%%%%%%%
%%%%%%%%%%%%%%%%%%%%%%%%%%%%%%%%%%%%%%%%%%%%%%%%%%%%%%%%%%%%%%%%%%%%%%%%%%%%%%%%%%%%%%%%%%%%%%%%
\subsection{Induced cohomological FMTs}
%%%%%%%%%%%%%%%%%%%%%%%%%%%%%%%%%%%%%%%%%%%%%%%%%%%%%%%%%%%%%%%%%%%%%%%%%%%%%%%%%%%%%%%%%%%%%%%%
We now recall some important notions from finite continued fraction
theory (see \cite{HW} for further details).
Let $\m = (m_1, \ldots, m_n)$ be a sequence of integers.
Define $s_i, t_i$ for $0 \le i \le n$  by
\begin{align*}
 &s_0=1, \ \ s_1 = m_1, \ \  s_k = m_k s_{k-1} + s_{k-2} \ (2 \le k \le n), \\
 &t_0=0, \ \ t_1 = 1,  \ \ t_k = m_k t_{k-1} + t_{k-2} \ (2 \le k \le n).
\end{align*}
The key result for us is the following standard fact which we
reproduce for the reader's convenience:
\begin{prop}
\label{prop2.2}
If we write the finite continued fraction by
\begin{equation*}
 [m_1, m_2, \ldots , m_n] =  m_1 + \cfrac{1}{\displaystyle m_2   +
   \cfrac{1}{\displaystyle \cdots + \frac{1}{\displaystyle m_n}}}
\end{equation*}
then
$ \dfrac{s_n}{s_{n-1}} = [m_n, \ldots, m_1]$, $\dfrac{t_n}{t_{n-1}} =
[m_n, \ldots, m_2]$, $\dfrac{s_n}{t_n} = [m_1, \ldots, m_n]$ and
$s_n t_{n-1} - s_{n-1} t_n = (-1)^n$.
\end{prop}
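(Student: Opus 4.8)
The plan is to encode the two coupled recurrences as a single matrix product and then read off all four assertions from that product. Set $M_k = \begin{pmatrix} m_k & 1 \\ 1 & 0 \end{pmatrix}$ for $1 \le k \le n$. The recurrences for $s_k$ and $t_k$ (which share the same transition rule and differ only in initial data) are precisely the statement that
\[
\begin{pmatrix} s_k & t_k \\ s_{k-1} & t_{k-1} \end{pmatrix} = M_k \begin{pmatrix} s_{k-1} & t_{k-1} \\ s_{k-2} & t_{k-2} \end{pmatrix},
\]
so a one-line induction starting from $\begin{pmatrix} s_1 & t_1 \\ s_0 & t_0 \end{pmatrix} = M_1$ gives
\[
\begin{pmatrix} s_n & t_n \\ s_{n-1} & t_{n-1} \end{pmatrix} = M_n M_{n-1} \cdots M_1.
\]
Since each $M_k$ is symmetric, transposing yields the companion identity $M_1 M_2 \cdots M_n = \begin{pmatrix} s_n & s_{n-1} \\ t_n & t_{n-1} \end{pmatrix}$.

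First I would dispatch the determinant identity: taking $\det$ of the displayed product and using $\det M_k = -1$ gives $s_n t_{n-1} - s_{n-1} t_n = (-1)^n$ at once.

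For the three continued-fraction identities I would use the M\"obius action of $\GL(2,\R)$ on the projective line $\mathbb{P}^1(\R) = \R \cup \{\infty\}$, under which $\begin{pmatrix} a & b \\ c & d \end{pmatrix}$ sends $z \mapsto (az+b)/(cz+d)$ and matrix multiplication corresponds to composition; note $A(\infty)$ and $A(0)$ are exactly the ratios of the first and second columns of $A$. The key observation is that $M_a$ realizes the map $z \mapsto a + 1/z$, so the composition $M_{a_1} \circ \cdots \circ M_{a_n}$ evaluated at $\infty$ telescopes to $[a_1, \ldots, a_n]$, while evaluating at $0$ makes the innermost map send $0 \mapsto \infty$ (dropping the last partial quotient) and hence telescopes to $[a_1, \ldots, a_{n-1}]$. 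Applying this to $M_1 \cdots M_n$ (so $a_i = m_i$) gives $s_n/t_n = [m_1, \ldots, m_n]$, the third assertion; applying it to $M_n \cdots M_1$ (so $a_i = m_{n+1-i}$) gives $s_n/s_{n-1} = [m_n, \ldots, m_1]$ from the first column and $t_n/t_{n-1} = [m_n, \ldots, m_2]$ from the second column, which are the first two assertions.

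The only genuine subtlety, and the step I would be most careful about, is the ordering: the prescribed initial conditions force the product to appear as $M_n \cdots M_1$, so the natural object its columns compute is the \emph{reversed} continued fraction $[m_n, \ldots, m_1]$ — this is exactly why (1) and (2) involve reversed sequences whereas (3), coming from the transposed product $M_1 \cdots M_n$, keeps the original order. A purely inductive alternative avoids matrices entirely: (1) and (2) follow from the front-recursion $[a_1, a_2, \ldots] = a_1 + 1/[a_2, \ldots]$ combined with the recurrences for $s_k$ and $t_k$, while (3) follows from the observation that $s_k$ and $t_k$ satisfy the same recurrence and initial conditions as the numerators and denominators of the convergents of $[m_1, m_2, \ldots]$. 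I would also record the trivial base cases and the convention for the empty continued fraction to cover small $n$.
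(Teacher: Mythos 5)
Your proof is correct and complete. Note that the paper itself offers no argument for Proposition \ref{prop2.2}: it is stated as ``a standard fact which we reproduce for the reader's convenience'' with a pointer to Hardy--Wright, so there is no in-text proof to compare against. Your matrix encoding $\left(\begin{smallmatrix} s_n & t_n \\ s_{n-1} & t_{n-1}\end{smallmatrix}\right) = M_n \cdots M_1$ is the standard device, and you use it efficiently: the determinant identity falls out of $\det M_k = -1$, and the three continued-fraction identities follow from reading column ratios as values of the associated M\"obius maps at $\infty$ and $0$, with the transposition trick (each $M_k$ symmetric) cleanly explaining why assertion (3) involves the original ordering while (1) and (2) involve the reversal. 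The one point worth making explicit if this were written out in full: in this paper the $m_i$ are arbitrary integers (they come from the Euclidean algorithm applied to $f_{\m}$), so intermediate partial quotients can vanish and the continued fractions should be read as formal identities, or the telescoping evaluation checked to avoid indeterminate forms; you flag the empty-continued-fraction convention for $t_1/t_0$, which covers the base case, and the purely inductive alternative you sketch at the end sidesteps the issue entirely. No gaps.
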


Let $(X,L)$ be a $g$-dimensional principally polarized abelian variety.
The transform $\Phi_{\m}: D^b(X) \to D^b(X)$ is defined by
\begin{equation*}
\Phi_{\m}: = \Phi \circ L^{(-1)^{n+1} m_n} \circ \Phi \circ \cdots
\circ {L}^{-m_2} \circ \Phi \circ {L}^{m_1} \circ \Phi.
\end{equation*}
Here $\Phi$ is the FMT from $X$ to $X$ with the Poincar\'{e} line
bundle $\Po$ on $X \times X$ as its kernel and $L^k$ is  $(-) \otimes
L^k$.

\begin{prop}
\label{prop2.3}
The isometric automorphism  associated to the FMT $\Phi_{\m}$ is
$$
f_{\m} =(-1)^{\frac{n(n+1)}{2}}
\begin{pmatrix}
(-1)^{n+1}t_n & (-1)^{n+1} s_n  \\
 t_{n-1}  &  s_{n-1}
\end{pmatrix}.
$$
\end{prop}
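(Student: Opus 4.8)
The plan is to reduce everything to a matrix product in $\SL(2,\Z)$ via the group homomorphism $\sigma_X : \Aut D^b(X) \to U(X \times \HX)$, $\Phi_\E \mapsto f_\E$, using the identification $U(X \times \HX) \cong \SL(2,\Z)$ available in the principally polarized case. Since $\sigma_X$ preserves composition, applying it to the defining expression
\[
\Phi_{\m} = \Phi \circ L^{(-1)^{n+1} m_n} \circ \Phi \circ \cdots \circ L^{-m_2} \circ \Phi \circ L^{m_1} \circ \Phi
\]
turns the computation of $f_{\m}$ into an ordered product of the matrices attached to the generators $\Phi$ and $L^k$. By Example~\ref{exmp1.2}, after dropping $\phi_L,\phi_L^{-1}$, these are the matrix of $\Phi$, namely $S = \begin{pmatrix} 0 & -1 \\ 1 & 0 \end{pmatrix}$, and $f_{L^k} = (f_L)^k = \begin{pmatrix} 1 & 0 \\ -k & 1 \end{pmatrix}$. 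I would first record the single building block $S \cdot f_{L^k} = \begin{pmatrix} k & -1 \\ 1 & 0 \end{pmatrix}$, which is precisely the matrix generating continued fractions and explains the appearance of the $s_i,t_i$ recursions.

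I would then argue by induction on $n$. For the base case $n=1$ one has $\Phi_{\m} = \Phi \circ L^{m_1} \circ \Phi$, and $S \cdot f_{L^{m_1}} \cdot S = \begin{pmatrix} -1 & -m_1 \\ 0 & -1 \end{pmatrix}$, which matches the claimed formula since $s_0=1,\ s_1=m_1,\ t_0=0,\ t_1=1$. For the inductive step, the key observation is that the sign of the exponent of $L^{m_i}$ is $(-1)^{i+1}$, independent of the total length, so the first $n$ factors (read from the right) of $\Phi_{(m_1,\dots,m_{n+1})}$ coincide with $\Phi_{(m_1,\dots,m_n)}$ and only $\Phi \circ L^{(-1)^{n} m_{n+1}}$ is prepended. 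Hence
\[
\Phi_{(m_1,\dots,m_{n+1})} = \Phi \circ L^{(-1)^{n} m_{n+1}} \circ \Phi_{(m_1,\dots,m_n)},
\]
and applying $\sigma_X$ gives the matrix recursion $f_{(m_1,\dots,m_{n+1})} = \bigl(S \cdot f_{L^{(-1)^n m_{n+1}}}\bigr) \cdot f_{(m_1,\dots,m_n)}$.

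Substituting the inductive form of $f_{(m_1,\dots,m_n)}$ and multiplying by $\begin{pmatrix} (-1)^n m_{n+1} & -1 \\ 1 & 0 \end{pmatrix}$, the two columns of the top row reproduce exactly the recursions $s_{n+1} = m_{n+1}s_n + s_{n-1}$ and $t_{n+1} = m_{n+1}t_n + t_{n-1}$, while the bottom row of the new matrix inherits the old top row scaled by $(-1)^{n+1}$. The main obstacle, and genuinely the only delicate point, is the sign bookkeeping: one must verify that the scalar prefactor $(-1)^{n(n+1)/2}$ together with the internal $(-1)^{n+1}$ factors transform into the correct data for index $n+1$. This reduces to the single congruence $(n+1)(n+2)/2 \equiv n(n+1)/2 + (n+1) \pmod 2$, which holds identically, together with the parallel check for the top row; everything else is mechanical matrix arithmetic and the recursions of Proposition~\ref{prop2.2}.

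Finally, as a consistency check I would confirm $f_{\m}\in\SL(2,\Z)$ directly. Expanding the claimed matrix, $\det f_{\m} = (-1)^{n(n+1)}\,(-1)^{n+1}\bigl(t_n s_{n-1} - s_n t_{n-1}\bigr)$; since $n(n+1)$ is even and $s_n t_{n-1} - s_{n-1} t_n = (-1)^n$ by Proposition~\ref{prop2.2}, this evaluates to $1$, which both validates the sign conventions and confirms that $f_{\m}$ indeed lands in $\SL(2,\Z)$.
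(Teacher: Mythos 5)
Your proof is correct and takes essentially the same route as the paper, whose entire proof is the single line ``By induction on $n$''; you have simply supplied the details (the reduction via $\sigma_X$, the building block $S\cdot f_{L^k}$, and the sign bookkeeping) that the authors left implicit. The base case, the recursion, and the parity congruence all check out.
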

\begin{proof}
By induction on $n$.
\end{proof}

Assume the Picard rank of $X$ is one and let $\ell$ be
$c_1(L)$. As usual, we write
 $\ch(E) = (a_0,  a_1,  a_2, \ldots,  a_g)$ and so the induced transform on
 $ H^{2*}(X, \Q)$ can be expressed as a $(g+1) \times (g+1)$ invertible matrix.

\begin{exmp}
\label{exmp2.1}
The following examples of induced FMTs on  $ H^{2*}(X, \Q)$ are important in this paper.
We identify them in matrix form as images of the corresponding
isometric automorphisms under $\rho^{(g)}$ as given by Proposition
\ref{prop2.1}.
\begin{center}
\begin{spacing}{1.5}
\begin{tabular}{ p{5cm} p{6.5cm}}
\toprule
$\Phi_\E$ & $\Phi_\E^{\SH} $   \\
\midrule
$[1] = \Phi_{\OO_{\Delta}[1]}^{X \to X}$   &
     $- I_{g} =  - \rho^{(g)}\left(f_{[1]}\right)$ \\

$ \Phi_{(x, \widehat{x})} = t_{x*}(-) \otimes \Po_{\widehat{x}}$   &
     $I_{g} = \rho^{(g)}\left(f_{(x, \widehat{x})} \right)$ \\

$\Phi = \Phi_{\Po}^{X \to X}$   &
     $\adiag \left( 1,-1, \ldots, (-1)^g \right) =  \rho^{(g)}\left( f_{\Po} \right)$ \\

$(-) \otimes L = \Phi_{\delta_* L}^{X \to X}$   &
     $\binom{i-1}{j-1}_{1 \le i,j \le g+1} =  \rho^{(g)}\left( f_{\delta_* L} \right)$ \\
\bottomrule
\end{tabular}
\end{spacing}
\end{center}
\end{exmp}

Since $(X,L)$ is principally polarized, any FMT $\Phi_\E$ in $ (X \times \HX) \rtimes \widetilde{\SL(2,\Z)}$ is isomorphic to
$\Phi_{\m} \circ \Phi_{(s, \widehat{s})} \circ [p]$ for some sequence of
integers $\m$, $s \in X$, $\widehat{s} \in \HX$ and $p \in \Z$.
The  induced transform $\Phi_\E^{\SH}$ on $H^{2*}(X, \Q)$ gives a well defined group homomorphism
$$
 (X \times \HX) \rtimes \widetilde{\SL(2,\Z)} \to \GL(H^{2*}(X, \Q)), \ \ \Phi_{\E} \mapsto \Phi_{\E}^{\SH}
$$
given by $\Phi_\E^{\SH} = (-1)^p \Phi_{\m}^{\SH}$ and
$\Phi_{\m}^{\SH} =  \rho^{(g)}\left(  f_{\m}  \right)$. Also $f_{\E} = f_{\m}$.

For $\m = (m_1, \ldots, m_n)$, let $x= (-1)^{\frac{(n+1)(n+2)}{2}}
t_n$, $y= (-1)^{\frac{(n+1)(n+2)}{2}} s_n$, $z =
(-1)^{\frac{n(n+1)}{2}} t_{n-1}$ and $w=(-1)^{\frac{n(n+1)}{2}}
s_{n-1}$. By Proposition \ref{prop2.3},  the induced transform on
$H^{2*}(X, \Q)$ is
$
\Phi_{\m}^{\SH}   = \rho^{(g)}  \begin{pmatrix}
x & y \\
z &  w
\end{pmatrix}
$
and its $(m,n)$-entry is given explicitly in Proposition \ref{prop2.1}.

Up to shift, any non-trivial FMT in $ (X \times \HX) \rtimes \widetilde{\SL(2,\Z)}$ is
isomorphic to some FMT $\Phi_\E$ with a universal bundle $\E$ on $X
\times X$ as the kernel.
Therefore  $\Phi_\E^{\SH} = \rho^{(g)}  \begin{pmatrix}
x & y \\
z &  w
\end{pmatrix}$
for some $x,y, z, w \in \Z$ such that $xw-yz =1$ and $\rk(\E_{\{s\} \times X}) = \ch_0(\Phi_\E(\OO_s)) =  (-1)^gy^g>0$ for any $s \in X$.

\begin{exmp}
\label{exmp2.2}
For the case $g=2$
$$
\Phi_{\E}^{\SH}=
\begin{pmatrix}
x^2 & -2 x y  & y^2 \\
-xz & xw+yz &  -yw \\
z^2  & -2zw   & w^2
\end{pmatrix}.
$$
\end{exmp}

\begin{exmp}
\label{exmp2.3}
For the case $g=3$
$$
\Phi_{\E}^{\SH} =
\begin{pmatrix}
x^3 & -3x^2y & 3xy^2 & -y^3  \\
-x^2 z & x^2w + 2xyz  & -y^2z - 2xyw  &  y^2 w \\
 x z^2 & -yz^2 - 2xzw  & x w^2 + 2yzw & -yw^2 \\
-z^3 & 3z^2w & -3zw^2 & w^3
\end{pmatrix}.
$$
\end{exmp}

Let us simply denote the twisted Chern character $\ch^{b\ell}$ by $\ch^b$.
Since $\left( (-)\otimes L^k \right)^{\SH} = \rho^{(g)} \begin{pmatrix}
1 & 0 \\
-k &  1
\end{pmatrix}$,
we have
\begin{align*}
\ch^{-w/y} \left( \Phi_{\E}(E)\right) & = e^{w\ell/y} \ \Phi_{\E}^{\SH} \left(e^{x\ell/y }  \ch^{x/y}(E)\right) \\
& =  \rho^{(g)} \begin{pmatrix}
1 & 0 \\
-{w}/{y} &  1
\end{pmatrix}
 \begin{pmatrix}
x & y \\
z &  w
\end{pmatrix}
\begin{pmatrix}
1 & 0 \\
-{x}/{y} &  1
\end{pmatrix}
\ch^{x/y}(E).
\end{align*}

Since $xw- yz =1$, we obtain the following presentation.
\begin{thm}
\label{prop2.4}
\begin{align*}
\ch^{-w/y} \left( \Phi_{\E}(E)\right) & = \rho^{(g)} \begin{pmatrix}
0 & y \\
-{1}/{y} &  0
\end{pmatrix}
\ch^{x/y}(E) \\
& = (-1)^g y^g
\adiag \left(1, \frac{-1}{y^2}, \ldots, \frac{(-1)^{g-1}}{y^{2(g-1)}},  \frac{(-1)^g}{y^{2g}} \right)
\ch^{x/y}(E).
\end{align*}
\end{thm}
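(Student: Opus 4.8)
The plan is to use the display immediately preceding the statement as the starting point. There $\ch^{-w/y}(\Phi_\E(E))$ has already been rewritten as $\rho^{(g)}$ applied to a product of three elementary $2\times 2$ matrices acting on $\ch^{x/y}(E)$. Since $\rho^{(g)}$ is a linear representation of $\GL(2,\R)$, it is multiplicative, so $\rho^{(g)}$ of the product of the three matrices equals the product of the three $\rho^{(g)}$-images already written. Thus the whole statement reduces to two essentially independent computations: first, evaluate the $2\times 2$ matrix product; second, compute $\rho^{(g)}$ of the single matrix that results.

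For the first equality I would simply multiply out
\[
\begin{pmatrix} 1 & 0 \\ -w/y & 1 \end{pmatrix}
\begin{pmatrix} x & y \\ z & w \end{pmatrix}
\begin{pmatrix} 1 & 0 \\ -x/y & 1 \end{pmatrix}.
\]
Working right to left, the product of the last two factors is $\begin{pmatrix} 0 & y \\ (zy-wx)/y & w \end{pmatrix}$, and the relation $xw - yz = 1$ turns the lower-left entry into $-1/y$. Multiplying on the left by the remaining lower-triangular factor then cancels the $w$ in the lower-right slot, leaving exactly $\begin{pmatrix} 0 & y \\ -1/y & 0 \end{pmatrix}$. Applying $\rho^{(g)}$ gives the first line of the statement.

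For the second equality I would feed $(x,y,z,w)=(0,y,-1/y,0)$ into the closed form of Proposition \ref{prop2.1}. Each summand of the $(m,n)$-entry carries the factors $x^{g-m-\lambda+2}$ and $w^{n-\lambda}$, whose exponents are non-negative over the range of $\lambda$ in which the two binomial coefficients are nonzero; hence with $x=w=0$ a term survives only when both exponents vanish, i.e. $\lambda = g-m+2$ and $\lambda = n$ simultaneously. This forces $m+n = g+2$, so every entry off the anti-diagonal is zero and only the anti-diagonal survives, with a single term at $\lambda = n$. Substituting $\lambda = n = g+2-m$ collapses both binomials to $1$ and leaves $(-1)^{g+1-m}\,y^{g+2-2m}$ as the $(m,g+2-m)$-entry. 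Comparing this with the $m$-th anti-diagonal entry $(-1)^{m-1}/y^{2(m-1)}$ of the claimed matrix scaled by $(-1)^g y^g$ confirms the match: the $y$-powers agree, and the two sign exponents differ by $2-2m$, hence agree. The only genuine effort lies in the bookkeeping of binomial coefficients and signs in Proposition \ref{prop2.1}; there is no conceptual obstacle, with the sign reconciliation being the one place where I would proceed carefully.
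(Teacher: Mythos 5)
Your proposal is correct and follows essentially the same route as the paper, which derives the first equality from the displayed three-matrix factorization together with $xw-yz=1$, and the second by specializing the closed formula of Proposition \ref{prop2.1} to the matrix $\begin{pmatrix} 0 & y \\ -1/y & 0 \end{pmatrix}$. Your sign and exponent bookkeeping on the anti-diagonal entries checks out, so nothing further is needed.
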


\begin{rem}
As a result of this theorem, we can see that the induced transform on
$H^{2*}(X, \Q)$ of any non-trivial FMT in $ (X \times \HX) \rtimes \widetilde{\SL(2,\Z)}$  with respect to the
appropriate twisted Chern characters looks somewhat similar to the
induced transform of $\Phi$ on $H^{2*}(X, \Q)$ with usual Chern
characters.
\end{rem}

%%%%%%%%%%%%%%%%%%%%%%%%%%%%%%%%%%%%%%%%%%%%%%%%%%%%%%%%%%%%%%%%%%%%%%%%%%%%%%%%%%%%%%%%%%%%%%%%
%%%%%%%%%%%%%%%%%%%%%%%%%%%%%%%%%%%%%%%%%%%%%%%%%%%%%%%%%%%%%%%%%%%%%%%%%%%%%%%%%%%%%%%%%%%%%%%%
\section{Action of FMTs on Stability Conditions}
\label{section3}
%%%%%%%%%%%%%%%%%%%%%%%%%%%%%%%%%%%%%%%%%%%%%%%%%%%%%%%%%%%%%%%%%%%%%%%%%%%%%%%%%%%%%%%%%%%%%%%%
%%%%%%%%%%%%%%%%%%%%%%%%%%%%%%%%%%%%%%%%%%%%%%%%%%%%%%%%%%%%%%%%%%%%%%%%%%%%%%%%%%%%%%%%%%%%%%%%
A Bridgeland stability condition $\sigma$ on a triangulated category
$\D$ consists of a stability function $Z$ together with a slicing $\PP$
of $\D$ satisfying certain axioms. Equivalently, one can define
$\sigma$ by giving a bounded t-structure on $\D$ together with a
stability function $Z$ on the corresponding heart $\A$ satisfying the
H-N property. Then $\sigma$ is usually written as the pair $(Z, \PP)$
or $(Z, \A)$. See \cite{Bri1}, \cite{Huy3} or \cite[Appendix D]{BBR} for
further details.
Let $\Upsilon  \in \Aut \D$ and let $W: K(\D) \to \C$ be a group homomorphism. Then
$$
\left( \Up \cdot W \right) ([E]) = W \left( [ \Up^{-1}(E) ] \right)
$$
defines a  left action of the  group $\Aut \D$ on $\Hom (K(\D),
\C)$. Moreover this can be extended to the natural left action of
$\Aut \D$ on the space of all stability conditions on $\D$ by defining
$\Up \cdot (Z, \A) = (\Upsilon \cdot Z , \Up(\A))$.

Let $(X, L)$ be a principally polarized $g$-dimensional abelian
variety with Picard rank one and let $\ell$ be $c_1(L)$. Then the Todd
class of $X$ is trivial and so for any object in $D^b(X)$ the Mukai vector
is  the Chern character. Any complexified class in  $\NS_{\C}(X)$ is
of the form $u \ell$ for some $u = b+ i m \in \C$,  where $b,m \in
\R$. Assume $m \ne 0$.
Define the function $Z_{u \ell}: K(X) \to \C$ by
$$
Z_{u \ell}(E) = - \int_X e^{-u \ell}  \ch(E).
$$
 If we denote the Mukai pairing on $X$ by $\langle -  , - \rangle$
then $Z_{u \ell}(E) = \langle e^{u \ell}  ,  \ch(E)   \rangle$.
It is expected  that $Z_{u \ell}$ is a central charge of some
stability condition on $X$ (see \cite[Conjecture 2.1.2]{BMT},
\cite{Pol1}).
This is already known to be true for $g=1,2$ completely, and the
authors proved the case of $m =\sqrt{3}/2$, $b= 1/2$ for $g=3$ in
\cite[Theorem 3.3]{MP}.

Let $ \Phi_{\E}$ be a non-trivial FMT in $ (X \times \HX) \rtimes \widetilde{\SL(2,\Z)}$ with kernel the universal bundle $\E$ on $X \times X$.
From the previous section, the induced transform on  $H^{2*}(X, \Q)$ is  $\Phi_{\E}^{\SH} = \rho^{(g)}
\begin{pmatrix}
 x & y \\
 z &  w
\end{pmatrix}$ for some  $x,y,z,w \in \Z$ satisfying $xw-yz =1$ and
$(-1)^g y^g >0$.  Also $e^{k \ell} = \left((-) \otimes
L^k\right)^{\SH}\left(\ch(\OO)\right)= \rho^{(g)}
\begin{pmatrix}
 1 & 0\\
-k &  1
\end{pmatrix} \ch(\OO)$.
Then
$$
\Phi_{\E}^{\SH} (e^{u \ell}) = \rho^{(g)} \begin{pmatrix}  x & y \\  z &  w \end{pmatrix}
                     \begin{pmatrix} 1 & 0 \\ -u &  1 \end{pmatrix} \ch(\OO)
                    = \rho^{(g)} \begin{pmatrix} x-yu & y \\ z-wu &  w \end{pmatrix} \ch(\OO)
$$
and from Proposition \ref{prop2.1} it is equal to $(x -yu)^g e^{(-z + wu)\ell/(x- yu)}$.

By C\u{a}ld\u{a}raru-Willertons' generalization, the cohomological FMTs are
isometries with respect to the Mukai pairing (see \cite{Cal},
\cite[Proposition 5.44]{Huy1}). Therefore for any $E \in D^b(X)$ we
have
$$
\left(\Phi_{\E} \cdot Z_{u \ell} \right)(E) = \left\langle e^{u
  \ell}  \ , \ \ch(\Phi_{\E}^{-1}(E)) \right\rangle
                                            =  \left\langle
                                            \Phi_{\E}^{\SH} (e^{u
                                              \ell})   \ , \ \ch(E)
                                            \right\rangle.
$$
So the function $Z_{u \ell} \in \Hom(K(X), \C)$ satisfies the
following key relation under the action of $\Aut D^b(X)$.

\begin{prop}
\label{prop3.1}
We have 
$
\Phi_{\E} \cdot Z_{u \ell} = \zeta \, \mathcal{Z}_{v \ell}
$
for $\zeta= (x-yu)^g $ and $v = (-z + wu)/(x- yu)$.
\end{prop}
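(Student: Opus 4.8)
The plan is to obtain the proposition by assembling the two facts recorded immediately before its statement: the isometry relation coming from the C\u{a}ld\u{a}raru--Willerton theorem, and the explicit evaluation of $\Phi_{\E}^{\SH}$ on an exponential class provided by Proposition \ref{prop2.1}. Both sides of the asserted identity are elements of $\Hom(K(X), \C)$, so it is enough to check that they agree on $[E]$ for an arbitrary $E \in D^b(X)$; the whole argument is then a pointwise computation with no homological input beyond the isometry property.

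First I would use that the cohomological transform is an isometry for the Mukai pairing to write, for every $E$, $\left(\Phi_{\E} \cdot Z_{u\ell}\right)([E]) = \langle \Phi_{\E}^{\SH}(e^{u\ell}), \ch(E)\rangle$. The content is then to identify $\Phi_{\E}^{\SH}(e^{u\ell})$. Writing $e^{u\ell} = \rho^{(g)}\begin{pmatrix} 1 & 0 \\ -u & 1\end{pmatrix}\ch(\OO)$ and using multiplicativity of $\rho^{(g)}$ together with $\Phi_{\E}^{\SH} = \rho^{(g)}\begin{pmatrix} x & y \\ z & w\end{pmatrix}$, the two matrices combine to $\begin{pmatrix} x - yu & y \\ z - wu & w\end{pmatrix}$, and Proposition \ref{prop2.1} evaluates the result at $\ch(\OO)$ as $(x - yu)^g e^{v\ell}$ with $v = (-z + wu)/(x - yu)$. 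A one-line substitution, using $\C$-linearity of the Mukai pairing in its first slot, then gives $\left(\Phi_{\E} \cdot Z_{u\ell}\right)([E]) = (x - yu)^g \langle e^{v\ell}, \ch(E)\rangle = (x - yu)^g Z_{v\ell}([E])$, and since $E$ is arbitrary this is the claimed equality.

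The only real obstacle is the computation behind the evaluation $\rho^{(g)}\begin{pmatrix} x - yu & y \\ z - wu & w\end{pmatrix}\ch(\OO) = (x - yu)^g e^{v\ell}$: one must track the signs built into the basis $\Omega$, under which $\ch(\OO)$ corresponds to $u_1^g$ and $e^{v\ell}$ to $(u_1 - v u_2)^g$, so that the exponent lands on $v = (-z + wu)/(x - yu)$ rather than its negative. I would also check, so that the right-hand side is meaningful, that $x - yu \neq 0$ --- which holds because $y \neq 0$ by $(-1)^g y^g > 0$ and $\Im u = m \neq 0$ force $\Im(x - yu) = -ym \neq 0$ --- and that $v$ again has nonzero imaginary part, since $u \mapsto v$ is a M\"obius transformation with integer coefficients and determinant $xw - yz = 1$.
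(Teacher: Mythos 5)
Your argument is correct and is essentially identical to the paper's: the text immediately preceding Proposition \ref{prop3.1} derives it in exactly this way, combining the matrix identity $\Phi_{\E}^{\SH}(e^{u\ell}) = \rho^{(g)}\left(\begin{smallmatrix} x-yu & y \\ z-wu & w\end{smallmatrix}\right)\ch(\OO) = (x-yu)^g e^{(-z+wu)\ell/(x-yu)}$ from Proposition \ref{prop2.1} with the C\u{a}ld\u{a}raru--Willerton isometry of the Mukai pairing. Your extra checks that $x-yu\neq 0$ and that $\Im v \neq 0$ are sound and harmless additions.
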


\begin{note}
\rm
\label{prop3.2}
Assume there exist a stability condition for any complexified ample class  $\theta \ell$ with a heart $\A_{\theta \ell}$ and slicing $\PP_{\theta \ell}$ associated to the central charge function $Z_{\theta \ell}$.  
From Proposition \ref{prop3.1} for any $\phi \in \R$,  \ $\zeta \, Z_{v\ell}\left( \Phi_\E\left( \PP_{u\ell}(\phi) \right)\right) \subset \R_{>0} e^{i\pi \phi}$; that is 
$$
Z_{v\ell}\left( \Phi_\E\left( \PP_{u\ell}(\phi) \right)\right) \subset \R_{>0} e^{i\left(\pi \phi - \arg(\zeta)\right)}.
$$
So  we would expect
$$
\Phi_\E \left( \PP_{u \ell}(\phi) \right) = \PP_{v\ell}\left( \phi - \frac{\arg (\zeta)}{\pi}\right),
$$
and so 
$$
\Phi_\E \left( \PP_{u \ell}(0,\,1] \right) = \PP_{v\ell}\left( -\frac{\arg (\zeta)}{\pi},  \, -\frac{\arg (\zeta)}{\pi} +1\right].
$$
For $0\le \alpha <1$, 
$\PP_{v\ell}(\alpha, \alpha+1] =\left \langle \PP_{v\ell}(0, \alpha]\,[1] , \, \PP_{v\ell}(\alpha, 1] \right \rangle$ is a tilt of $\A_{v\ell} = \PP_{v \ell}(0,1]$
 associated to a torsion theory coming from $Z_{v \ell}$-stability. 
Therefore, one would expect $\Phi_\E(\A_{u \ell})$  is a
tilt of $\A_{v \ell}$ associated to a torsion theory coming from $Z_{v \ell}$-stability, up to shift. 

Moreover, for the nontrivial FMT
$\Phi_\E$ (i.e. $y  \ne 0$), when $\zeta = (x-yu)^g$ is real, we would expect the equivalence
(for some integer $q$)
$$
\Phi_\E(\A_{u \ell}) \cong \A_{v \ell}[q].
$$
Here $u = {x}/{y} + \lambda e^{i l\pi/g}$ and $v =- {w}/{y} -
\frac{1}{\lambda y^2 } e^{- i l\pi/g}$
for some  $l \in \Z \setminus g \Z$ and $0 \ne \lambda \in \R$.
The numerology given for $g=3$ case is very important in the rest of this paper.
\end{note}

\begin{rem}
Let us consider the numerology for $g=2$ case. For $b, m \in \Q$ with
$m>0$, recall $\B_{m \ell, b \ell}$ is the tilt of $\coh(X)$ with
respect to the torsion theory coming from twisted slope $\mu_{m \ell,
  b \ell}$-stability on $\coh(X)$. Since this category is independent
of $m$, we simply denote it  by $\B_b$.
Following the ideas in \cite{Yosh1} together with the
representation of the induced transform on $H^{2*}(X, \Q)$ with
respect to the twisted Chern characters as in Theorem \ref{prop2.4},
one can prove the equivalence
$$
\Phi_{\E}[1]\left( \B_b \right) \cong  \B_{b'},
$$
where $b = x/y$ and $b'= -w/y$ (see \cite{Huy2}).
\end{rem}

%%%%%%%%%%%%%%%%%%%%%%%%%%%%%%%%%%%%%%%%%%%%%%%%%%%%%%%%%%%%%%%%%%%%%%%%%%%%%%%%%%%%%%%%%%%%%%%%
%%%%%%%%%%%%%%%%%%%%%%%%%%%%%%%%%%%%%%%%%%%%%%%%%%%%%%%%%%%%%%%%%%%%%%%%%%%%%%%%%%%%%%%%%%%%%%%%
\section{Relation of FMTs to the Strong B-G Type Inequality}
\label{section4}
%%%%%%%%%%%%%%%%%%%%%%%%%%%%%%%%%%%%%%%%%%%%%%%%%%%%%%%%%%%%%%%%%%%%%%%%%%%%%%%%%%%%%%%%%%%%%%%%
\subsection{Some properties of $\A_{\om, B}$}
%%%%%%%%%%%%%%%%%%%%%%%%%%%%%%%%%%%%%%%%%%%%%%%%%%%%%%%%%%%%%%%%%%%%%%%%%%%%%%%%%%%%%%%%%%%%%%%%
Let $(X,L)$ be a polarized projective threefold with Picard rank one
and let $\ell$ be $c_1(L)$. Let $D, B$ be in $\NS_\Q(X)$.
Then there exists $b \in \Q$ such that $B= b \ell$. Assume $b > 0$.
Then with respect to the twisted Chern character $\ch^D$ the central charge function is
$$
Z_{\sqrt{3} B, D + B}(-) = - \int_X e^{-\left( B + i \sqrt{3}B \right)} \ch^D(-).
$$
So for $E \in D^b(X)$,   $\Im \, Z_{\sqrt{3} B, D + B}(E) = \sqrt{3} b
\ell \left( \ch_2^D(E) - b \ell \ch_1^D(E) \right)$.

\begin{prop}
\label{prop4.1}
Let $E \in \B_{\sqrt{3}B, D+B}$ and  let $E_{i} =
H^{i}_{\coh(X)}(E)$. Let $E_i^\pm$ be the  H-N semistable factors of
$E_i$ with highest and lowest $\mu_{\sqrt{3}B,D+B}$ slopes. Then we have
the following:
\begin{enumerate}[(i)]
\item if $E \in \HN^{\nu}_{\sqrt{3}B , D+B}(-\infty, 0) $ and $E_{-1}
  \ne 0$, then $\ell^2 \ch_1^D(E_{-1}^+) < 0$;
\item if $E \in \HN^{\nu}_{\sqrt{3}B , D+B}(0, +\infty]$ and $\rk(E_0)
  \ne 0$, then $\ell^2 \ch_1^D(E_0^-) > 2 b \ell^3 \ch_0^D(E_0^-)$;
  and
\item if $E$ is tilt-stable with $\nu_{\sqrt{3}B, D+B}(E) =0$, then
\begin{enumerate}
\item for $E_{-1} \ne 0$, $\ell^2 \ch_1^D(E_{-1}) \le  0$ with equality
  if and only if $\ch_2^D(E_{-1}) = 0$, and
\item for $\rk(E_0) \ne 0$, $\ell^2 \ch_1^D(E_0) \ge 2 b \ell^3
  \ch_0^D(E_0)$ with equality
  if and only if $\ch_2^D(E_0) = 2b^2 \ell^2\ch_0^D(E_0)$.
\end{enumerate}
\end{enumerate}
\end{prop}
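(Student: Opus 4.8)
The plan is to convert every hypothesis into sign information about the two additive numerical functions attached to the central charge,
$$
\Im\,Z_{\sqrt3 B, D+B}(-) = \sqrt3\,b\,\ell\!\left(\ch_2^D(-) - b\,\ell\,\ch_1^D(-)\right),\qquad P(-) := \om^2\ch_1^{D+B}(-) = 3b^2\!\left(\ell^2\ch_1^D(-) - b\,\ell^3\ch_0^D(-)\right),
$$
where $\om=\sqrt3 B$, and then to feed them into the classical Bogomolov inequality: for a $\mu_{\sqrt3 B,D+B}$-semistable torsion-free sheaf $G$,
$$
\overline\Delta(G) := \left(\ell^2\ch_1^D(G)\right)^2 - 2\left(\ell^3\ch_0^D(G)\right)\left(\ell\,\ch_2^D(G)\right) \ge 0 .
$$
I would organise everything around the canonical short exact sequence in $\B_{\sqrt3 B, D+B}$,
$$
0 \to E_{-1}[1] \to E \to E_0 \to 0 ,
$$
in which $E_{-1}[1]$ is a subobject and $E_0$ a quotient. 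Membership of $E$ in a prescribed $\HN^{\nu}$-slice then transfers to $E_{-1}[1]$ and $E_0$, while $E_{-1}\in\F_{\sqrt3 B,D+B}$ and $E_0\in\T_{\sqrt3 B,D+B}$ control the twisted slopes of the $\mu$-Harder--Narasimhan factors of these cohomology sheaves; throughout I use that $P(-)\ge0$ on $\B_{\sqrt3 B,D+B}$, with $P=0$ exactly at $\nu=+\infty$.

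For (i) and (ii) a single extremal factor suffices. In (i), $E_{-1}^{+}$ is the maximal destabilising subsheaf, so after shifting $E_{-1}^{+}[1]\hookrightarrow E_{-1}[1]\hookrightarrow E$ is a subobject; since $\nu^{+}(E)<0$ its slope is negative, which both forces $\mu(E_{-1}^{+})<0$ (else $P(E_{-1}^{+}[1])=0$ and $\nu=+\infty$) and gives $\Im\,Z(E_{-1}^{+})>0$, i.e. $\ell\,\ch_2^D(E_{-1}^{+})>b\,\ell^2\ch_1^D(E_{-1}^{+})$. Substituting this into $\overline\Delta(E_{-1}^{+})\ge0$ and using $\ell^2\ch_1^D(E_{-1}^{+})<b\,\ell^3\ch_0^D(E_{-1}^{+})$ yields $\ell^2\ch_1^D(E_{-1}^{+})\bigl(\ell^2\ch_1^D(E_{-1}^{+})-2b\,\ell^3\ch_0^D(E_{-1}^{+})\bigr)>0$ with the second factor negative, hence $\ell^2\ch_1^D(E_{-1}^{+})<0$. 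Part (ii) is dual: $E_0^{-}$ is a quotient sheaf of $E_0$, hence of $E$, so $\nu^{-}(E)>0$ gives $\Im\,Z(E_0^{-})>0$; plugging $\ell\,\ch_2^D(E_0^{-})>b\,\ell^2\ch_1^D(E_0^{-})$ into $\overline\Delta(E_0^{-})\ge0$ and cancelling the positive quantity $\ell^2\ch_1^D(E_0^{-})$ (positive because $\mu(E_0^{-})>0$) produces exactly the factor $2$ in $\ell^2\ch_1^D(E_0^{-})>2b\,\ell^3\ch_0^D(E_0^{-})$.

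For (iii) the object $E$ sits on the wall $\nu(E)=0$, the strict inequalities of (i)/(ii) degenerate, and a single extremal factor is no longer enough; here I would use the entire $\mu$-HN filtration $0\subset F_1\subset\cdots\subset F_k=E_{-1}$ together with stability. Each $F_j[1]\hookrightarrow E$ is a subobject, so $\nu$-stability of $E$ forces $\nu(F_j[1])<0$ at every proper step, equivalently the partial sums $\sum_{i\le j}\Im\,Z(G_i)>0$ for the factors $G_i=F_i/F_{i-1}$; the same stability excludes a factor of slope $\mu=0$ (its shift would be a subobject with $\nu=+\infty$), so all $\mu(G_i)<0$. A telescoping argument then closes (iii)(a): were some $\ell^2\ch_1^D(G_j)>0$, monotonicity of the $\mu$-slopes would force $\ell^2\ch_1^D(G_i)>0$ for all $i$ up to the largest such index, and combining $\overline\Delta(G_i)\ge0$ with $\mu(G_i)<0$ would make $\sum\ell\,\ch_2^D(G_i)$ too small to satisfy the partial-sum positivity — a contradiction; hence every $\ell^2\ch_1^D(G_i)\le0$ and $\ell^2\ch_1^D(E_{-1})\le0$. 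Part (iii)(b) is the mirror image, using the quotients $E_0/F_j$ and tail sums $\sum_{i>j}\Im\,Z(G_i)>0$, with the factor $2$ reappearing as in (ii). The equality statements I would obtain by tracing back: equality forces $\overline\Delta=0$ on each factor and $\Im\,Z=0$ on the cohomology sheaf itself (the wall $\ell\,\ch_2^D=b\,\ell^2\ch_1^D$), which for $E_{-1}$ collapses to $\ch_2^D(E_{-1})=0$ and for $E_0$ to $\ch_2^D(E_0)=2b^2\ell^2\ch_0^D(E_0)$.

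The main obstacle is (iii). A purely factor-by-factor estimate is genuinely insufficient: one can satisfy Bogomolov and $\mu<0$ on each factor, and even $\Im\,Z(E_{-1})\ge0$ overall, while still arranging $\sum\ell^2\ch_1^D(G_i)>0$ if the factors are permitted in an arbitrary order; what rescues the inequality is precisely the \emph{strict ordering} of the $\mu$-HN slopes combined with the partial- and tail-sum positivity that only tilt-stability of $E$ supplies. The most delicate bookkeeping is therefore in (iii): building the correct filtration inside $\B_{\sqrt3 B,D+B}$, extracting the partial-sum inequalities from stability, and separating the genuinely mixed case ($E_{-1}\neq0\neq E_0$), where both inequalities are strict, from the pure case ($E_0=0$ or $E_{-1}=0$), where the object meets the wall and the stated $\ch_2^D$-equalities are attained.
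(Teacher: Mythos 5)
Your argument is correct and is essentially the paper's own proof, which consists of observing that the classical Bogomolov--Gieseker inequality holds in the $\ch^D$-twisted form for $\mu_{\sqrt{3}B,D+B}$-semistable torsion-free sheaves and then running exactly the analysis of \cite[Proposition 3.1]{MP} that you reconstruct: extremal $\mu$-HN factors of $E_{-1}$ and $E_0$ embedded as sub-/quotient objects of $E$ in $\B_{\sqrt{3}B,D+B}$ for (i)--(ii), and the full filtration with partial- and tail-sum positivity of $\Im\,Z_{\sqrt{3}B,D+B}$ supplied by tilt-stability for (iii). The only place your sketch is thinner than it should be is the ``if and only if'' clauses of (iii): tracing back does show that equality forces every $\mu$-HN factor to be extremal and hence $\Im\,Z_{\sqrt{3}B,D+B}$ of the cohomology sheaf to vanish, which yields the stated $\ch_2^D$ conditions, but the converse implications rest on the fact that on this wall the $\ch_1^D$- and $\ch_2^D$-equalities determine one another, and that step (in particular ruling out the mixed case $E_{-1}\neq 0\neq E_0$ there) deserves to be made explicit.
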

\begin{proof}
For a slope semistable torsion free sheaf $G$,  the usual B-G
inequality in terms of the twisted Chern character is
$\left(\ch_1^D(G)\right)^2\ell \ge 2 \ch_0^D(G) \ch_2^D(G)\ell$. The
proposition follows in exactly the same way as \cite[Proposition 3.1]{MP}.
\end{proof}

We have
$
Z_{\sqrt{3} B, D - B}(-) = - \int_X e^{-\left( B + i \sqrt{3}B \right)} \ch^{D-2B}(-)
$
and  also $ \ch^{D-2B} = e^{2B} \ch^{D}$.
Therefore from the above proposition we get the following.
\begin{prop}
\label{prop4.2}
Let $E \in \B_{\sqrt{3}B, D-B}$ and  let $E_{i} =
H^{i}_{\coh(X)}(E)$. Let $E_i^\pm$ be the  H-N semistable factors of
$E_i$ with highest and lowest $\mu_{\sqrt{3}B,D-B}$ slopes. Then we have
\begin{enumerate}[(i)]
\item if $E \in \HN^{\nu}_{\sqrt{3}B , D-B}(-\infty, 0)$ and $E_{-1}
  \ne 0$, then $\ell^2 \ch_1^D(E_{-1}^+) < - 2 b \ell^3
  \ch_0^D(E_{-1}^+)$;
\item if $E \in \HN^{\nu}_{\sqrt{3}B , D-B}(0, +\infty]$ and $\rk(E_0)
  \ne 0$, then $\ell^2 \ch_1^D(E_{0}^-) > 0$; and
\item if $E$ is tilt-stable with $\nu_{\sqrt{3}B, D-B}(E) =0$, then
\begin{enumerate}
\item for $E_{-1} \ne 0$, $\ell^2 \ch_1^D(E_{-1}) \le - 2 b \ell^3 \ch_0^D(E_{-1})$ with equality
  if and only if $\ch_2^D(E_{-1}) = 2b^2 \ell^2\ch_0^D(E_{-1})$, and
\item for $\rk(E_0) \ne 0$, $\ell^2 \ch_1^D(E_0) \ge 0$ with equality
  if and only if $\ch_2^D(E_0) = 0$.
\end{enumerate}
\end{enumerate}
\end{prop}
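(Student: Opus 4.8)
The plan is to deduce every part from Proposition \ref{prop4.1} by the single substitution $D \mapsto D - 2B$, avoiding any fresh analysis of semistable sheaves. The decisive observation is the one recorded just before the statement: since
$$
Z_{\sqrt{3}B, D-B}(-) = -\int_X e^{-(B+i\sqrt{3}B)}\ch^{D-2B}(-),
$$
the central charge governing $\B_{\sqrt{3}B, D-B}$, read through the twisted character $\ch^{D-2B}$, has exactly the same shape as the one governing $\B_{\sqrt{3}B, D+B}$ read through $\ch^D$. Because $(D-2B)+B = D-B$, replacing $D$ by $D-2B$ everywhere in Proposition \ref{prop4.1} leaves the ample class $\om = \sqrt{3}B$, the parameter $b$ (which is tied to $B$, not $D$), the tilted heart $\B_{\sqrt{3}B, D-B}$, the slope $\mu_{\sqrt{3}B, D-B}$, and all the subcategories $\HN^{\nu}_{\sqrt{3}B, D-B}(I)$ unchanged. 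So first I would simply invoke Proposition \ref{prop4.1} in this substituted form: its three conclusions hold verbatim, but phrased in $\ch^{D-2B}$ in place of $\ch^D$, and with the factors $E_i^\pm$ now taken with respect to $\mu_{\sqrt{3}B, D-B}$, exactly as required here.

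The second step is to translate each conclusion back into $\ch^D$ using $\ch^{D-2B} = e^{2B}\ch^D$ with $B = b\ell$. Reading off the relevant graded pieces gives $\ch_0^{D-2B} = \ch_0^D$ and $\ell^2\ch_1^{D-2B} = \ell^2\ch_1^D + 2b\ell^3\ch_0^D$. Feeding these into the inequalities produced by the substituted Proposition \ref{prop4.1} yields (i), (ii), and the two inequalities of (iii) directly: the bound $\ell^2\ch_1^{D-2B}(E_{-1}^+) < 0$ becomes $\ell^2\ch_1^D(E_{-1}^+) < -2b\ell^3\ch_0^D(E_{-1}^+)$, while $\ell^2\ch_1^{D-2B}(E_0^-) > 2b\ell^3\ch_0^{D-2B}(E_0^-)$ collapses (the $2b\ell^3\ch_0^D$ terms cancelling) to $\ell^2\ch_1^D(E_0^-) > 0$; the inequalities in (iii) transform in the same way.

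For the equality clauses in (iii) I would additionally use $\ch_2^{D-2B} = \ch_2^D + 2b\ell\,\ch_1^D + 2b^2\ell^2\ch_0^D$ together with the boundary relation already forced at equality. In case (a), equality means $\ell^2\ch_1^D(E_{-1}) = -2b\ell^3\ch_0^D(E_{-1})$; substituting this into the $\ch_2$-identity collapses the condition $\ch_2^{D-2B}(E_{-1}) = 0$ of Proposition \ref{prop4.1} to $\ch_2^D(E_{-1}) = 2b^2\ell^2\ch_0^D(E_{-1})$. In case (b), equality means $\ell^2\ch_1^D(E_0) = 0$, and the same identity reduces $\ch_2^{D-2B}(E_0) = 2b^2\ell^2\ch_0^{D-2B}(E_0)$ to $\ch_2^D(E_0) = 0$. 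None of this is deep; the only point demanding care is precisely this bookkeeping in the equality cases, where one must impose the boundary relation \emph{before} simplifying the transformed $\ch_2$ condition, and must keep in mind that $b$ is determined by $B$ and so is untouched by the shift in $D$.
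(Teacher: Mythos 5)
Your proposal is correct and follows exactly the paper's own route: the paper's entire proof consists of the two identities $Z_{\sqrt{3}B,D-B}(-)=-\int_X e^{-(B+i\sqrt{3}B)}\ch^{D-2B}(-)$ and $\ch^{D-2B}=e^{2B}\ch^{D}$, from which Proposition \ref{prop4.2} is read off from Proposition \ref{prop4.1} by the substitution $D\mapsto D-2B$. You have merely made the bookkeeping (including the equality cases) explicit, and all of it checks out.
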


%%%%%%%%%%%%%%%%%%%%%%%%%%%%%%%%%%%%%%%%%%%%%%%%%%%%%%%%%%%%%%%%%%%%%%%%%%%%%%%%%%%%%%%%%%%%%%%%
\subsection{Relation of FMTs to stability conditions}
\label{subsec4.2}
%%%%%%%%%%%%%%%%%%%%%%%%%%%%%%%%%%%%%%%%%%%%%%%%%%%%%%%%%%%%%%%%%%%%%%%%%%%%%%%%%%%%%%%%%%%%%%%%

From here onwards let $(X,L)$ be a principally polarized abelian
threefold with Picard rank one and let $\ell$ be $c_1(L)$. As before, we also abbreviate the
twisted Chern character $\ch^{b\ell}(E) = e^{-b \ell}
\ch(E)$ by $\ch^b(E)$.

Similar to \cite[Example 2.5]{MP} we can identify some examples of
minimal objects of any $\A_{\om, B}$ as follows.

\begin{exmp}
\label{exmp4.1}
Let $p, q \in \Q$ and $q>0$. There exist simple semi-homogeneous
vector bundles  $\E^{\pm}_{s}$ parameterized by $s \in X$ having the
Chern character $(u^3, u^2v , uv^2 , v^3 )$ with $u ,v \in \Z$ such
that $u>0$, $\gcd(u,v) =1$ and $v/u = {p \pm q}$.
Here the vector bundles $\E^{\pm}_{s}$ are restrictions of the
universal bundles $\E^{\pm}$ on $X \times X$ associated to FMTs.
Also $\E^{\pm}_{s}$ are slope stable (see \cite[Proposition 6.16]{Muk1}).
Then the discriminant in the sense of Dre\'{z}et
$\overline{\Delta}_{\sqrt{3}q \ell}(\E^\pm_{s}) = 0$ and so by
\cite[Proposition 7.4.1]{BMT}  $\E^+_{s} , \E^-_{s}[1] \in
\B_{\sqrt{3}q \ell, p \ell}$ are $\nu_{\sqrt{3}q \ell, p
  \ell}$-stable. Also we have $\Im \, Z_{\sqrt{3}q \ell, p
  \ell}(\E^\pm_{s}) = 0$ and $\ch_1^{p}(\E^\pm_{s}) \ne 0$ and so
$\nu_{\sqrt{3}q \ell, p \ell}(\E^+_{s}) = \nu_{\sqrt{3}q \ell, p
  \ell}(\E^-_{s}[1]) = 0$. Therefore by \cite[Lemma 2.3]{MP}
$\E^+_{s}[1] , \E^-_{s}[2] \in \A_{\sqrt{3}q \ell, p \ell}$ are
minimal objects. Moreover one can check by direct computation that
$\E^+_{s} , \E^-_{s}[1] \in \SM_{\sqrt{3}q \ell, p \ell}$ satisfy the
strong B-G type inequality.
\end{exmp}

Let $\Up$ be a non-trivial FMT in $ (X \times \HX) \rtimes \widetilde{\SL(2,\Z)}$ with kernel the  universal bundle $\E$
on $X \times X$. Then the induced transform on $H^{2*}(X, \Q)$ is
$\Up^{\SH} = \rho \begin{pmatrix}
x & y \\
z &  w
\end{pmatrix}
$ for some $x,y,z,w \in \Z$ with $xw-yz =1$ and $y <0$ (see Example
\ref{exmp2.3}). Here and in the rest of the paper we write $\rho$ for
$\rho^{(3)}$. Now we have $\ch(\E_{\{ s \} \times X}) = (-y^3, y^2 w,
-yw^2, w^3)$.  Let $\HUp$ be the FMT with kernel given by 
$\Sigma^*\E^*$, where 
$$
\Sigma: X\times X \to  X\times X: \, (x_1, x_2) \mapsto (x_2, x_1)
$$ 
switches the
factors. Then $\HUp[3]$ is the quasi-inverse of $\Up$ 
and so we have $\HUp^{\SH} = \rho \begin{pmatrix}
-w & y \\
z &  -x
\end{pmatrix}$. Also $\ch(\E^*_{X \times\{s\}}) = (-y^3, -y^2 x, -yx^2, -x^3)$.
For $g=3$ case, Theorem \ref{prop2.4} says
$$
\ch^{-w/y} \left( \Up(E)\right)  = \rho \begin{pmatrix}
0 & y \\
-{1}/{y} &  0
\end{pmatrix}
\ch^{x/y}(E)
 =  \adiag\left(- y^3, y, -\frac{1}{y}, \frac{1}{y^3} \right) \ch^{x/y}(E),
$$
and we have $\ch^{-w/y} (\E_{\{s\} \times X}) = (-y^3, 0,0,0)$ and
$\ch^{x/y} (\E_{ X \times \{s\}}^* ) = (-y^3, 0,0,0)$.

For some given $\lambda \in \Q_{>0}$, let
$$
b   = \left( \frac{x}{y} + \frac{ \lambda}{2} \right), \ m =  \frac{\sqrt{3} \lambda}{2},\
b'  = \left(- \frac{w}{y} - \frac{1}{2 \lambda y^2}\right)  \ \text{ and } \  m' =  \frac{\sqrt{3}}{2 \lambda y^2}.
$$

\begin{prop}
\label{prop4.3}
For $E \in D^b(X)$, we have
\begin{itemize}
\item $ \text{if } \ch^{x/y}(E) = (a_0, a_1,a_2, a_3) \text{ then }
  \Im \, Z_{m \ell, b \ell}(E) = \frac{3 \sqrt{3} \lambda}{2}\left(a_2
  - \lambda a_1\right), \text{ and}$
\item $\text{if } \ch^{-w/y}(E) = (a_0, a_1,a_2, a_3) \text{ then }
  \Im \, Z_{m' \ell, b' \ell}(E) = \frac{3 \sqrt{3}}{2 \lambda
    y^2}\left(a_2 + \frac{1}{\lambda y^2} a_1\right)$.
\end{itemize}
\end{prop}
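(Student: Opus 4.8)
The plan is to compute $\Im Z_{m\ell, b\ell}$ directly from the definition and reduce both statements to a single master formula. First I would rewrite the central charge in terms of the twisted Chern character: since $Z_{\om, B}(E) = -\int_X e^{-B - i\om}\ch(E)$ and $e^{-B}\ch = \ch^{B}$, taking $\om = m\ell$ and $B = b\ell$ gives $Z_{m\ell, b\ell}(E) = -\int_X e^{-im\ell}\,\ch^{b}(E)$. Since $\dim_\C X = 3$, higher powers of $\ell$ vanish and $e^{-im\ell} = \left(1 - \frac{m^2}{2}\ell^2\right) - i\left(m\ell - \frac{m^3}{6}\ell^3\right)$. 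Multiplying by $\ch^{b}(E)$, extracting the top-degree coefficient, and using the principal-polarization identity $\int_X \ell^3 = 3! = 6$ (which follows from $\chi(L) = \ell^3/3! = 1$), I expect the clean formula $\Im Z_{m\ell, b\ell}(E) = 3m\,\ch^{b}_2(E) - m^3\,\ch^{b}_0(E)$. Only the degree-$2$ and degree-$0$ parts of $\ch^{b}(E)$ survive, which is exactly why both claimed identities involve $a_2, a_1, a_0$ alone.

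For the first identity I would then re-express $\ch^{b}_2$ and $\ch^{b}_0$ in terms of the components of $\ch^{x/y}(E) = (a_0, a_1, a_2, a_3)$. Since $b = x/y + \lambda/2$, we have $\ch^{b}(E) = e^{-(\lambda/2)\ell}\,\ch^{x/y}(E)$, so $\ch^{b}_0 = a_0$ and $\ch^{b}_2 = a_2 - \lambda a_1 + \frac{\lambda^2}{4}a_0$. Substituting $m = \sqrt{3}\lambda/2$ (so that $3m = \frac{3\sqrt{3}\lambda}{2}$ and $m^3 = \frac{3\sqrt{3}\lambda^3}{8}$) into the master formula, the two $a_0$ contributions, namely $3m\cdot\frac{\lambda^2}{4}a_0$ and $-m^3 a_0$, cancel exactly, leaving $\Im Z_{m\ell, b\ell}(E) = \frac{3\sqrt{3}\lambda}{2}(a_2 - \lambda a_1)$ as claimed.

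The second identity is entirely parallel. Here $b' = -w/y - \frac{1}{2\lambda y^2}$, so writing $\mu = \frac{1}{2\lambda y^2}$ we have $\ch^{b'}(E) = e^{\mu\ell}\,\ch^{-w/y}(E)$, hence $\ch^{b'}_0 = a_0$ and $\ch^{b'}_2 = a_2 + 2\mu a_1 + \mu^2 a_0$, where now $(a_0, a_1, a_2, a_3) = \ch^{-w/y}(E)$. Inserting $m' = \sqrt{3}\mu$ into the master formula again forces the $a_0$ terms to cancel, yielding $\Im Z_{m'\ell, b'\ell}(E) = 3\sqrt{3}\mu(a_2 + 2\mu a_1) = \frac{3\sqrt{3}}{2\lambda y^2}\left(a_2 + \frac{1}{\lambda y^2}a_1\right)$, which is the second formula. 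I do not anticipate any genuine obstacle: the whole argument is a bookkeeping computation, and the only points needing care are the normalization conventions (each $\ch_i$ carrying the factor $\ell^i/i!$ together with $\int_X \ell^3 = 6$) and translating each shift of the twist parameter into the correct $e^{\mp\delta\ell}$ multiplier. The pleasant cancellation of the $a_0$ terms is forced by the defining relation $m = \sqrt{3}\,(b - x/y)$ between the imaginary and real parts of the complexified class, so I would flag that relation as the conceptual heart of the computation even though it presents no technical difficulty.
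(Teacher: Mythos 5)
Your computation is correct, and it is essentially the argument the paper intends: Proposition \ref{prop4.3} is stated without proof as an immediate specialization of the displayed identity $\Im\,Z_{\sqrt{3}B,\,D\pm B}(E)=\sqrt{3}\,b\ell\left(\ch_2^{D}(E)\mp b\ell\,\ch_1^{D}(E)\right)$ derived at the start of Section \ref{section4}, with $B=\tfrac{\lambda}{2}\ell$, $D=\tfrac{x}{y}\ell$ in the first case and $B=\tfrac{1}{2\lambda y^2}\ell$, $D=-\tfrac{w}{y}\ell$ in the second. Your ``master formula'' $\Im\,Z_{m\ell,b\ell}=3m\,\ch_2^{b}-m^3\ch_0^{b}$ and the cancellation of the $a_0$ terms forced by $\om=\sqrt{3}B$ are exactly the content of that identity, so there is nothing to add.
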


%%%%%%%%%%%%%%%%%%%%%%%%
%%%%%%%%%%%%%%%%%%%%%%%%
The following result is a  generalization of \cite[Proposition 6.2]{MP}.
\begin{prop}
\label{prop4.4}
For $E \in D^b(X)$, we have
\begin{align*}
& \Im\,Z_{m'\ell, b' \ell}(\Up(E)) = - \frac{1}{|\lambda y|^3} \Im\,Z_{m \ell, b \ell}(E), \text{ and } \\
& \Im\,Z_{m\ell, b \ell}(\HUp[1](E)) = - |\lambda y|^3 \Im\,Z_{m' \ell, b' \ell}(E).
\end{align*}
\end{prop}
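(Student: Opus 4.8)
The plan is to reduce both equalities to numerical identities between twisted Chern characters, combining the explicit anti-diagonal description of the cohomological transform in Theorem \ref{prop2.4} with the two formulas for $\Im Z$ recorded in Proposition \ref{prop4.3}.

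First I would treat the $\Up$-identity. Writing $\ch^{x/y}(E) = (a_0, a_1, a_2, a_3)$, Theorem \ref{prop2.4} (in the $g=3$ form displayed before the proposition) gives
\[
\ch^{-w/y}(\Up(E)) = \adiag\left(-y^3,\, y,\, -\tfrac{1}{y},\, \tfrac{1}{y^3}\right)\ch^{x/y}(E) = \left(-y^3 a_3,\ y a_2,\ -\tfrac{a_1}{y},\ \tfrac{a_0}{y^3}\right).
\]
Only the two middle coordinates enter the formula for $\Im Z_{m'\ell, b'\ell}$ in Proposition \ref{prop4.3}; substituting $A_1 = y a_2$ and $A_2 = -a_1/y$ and simplifying produces an expression proportional to $(a_2 - \lambda a_1)$, which is exactly the quantity computing $\Im Z_{m\ell, b\ell}(E)$. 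Cancelling the explicit constants yields $\Im Z_{m'\ell, b'\ell}(\Up(E)) = \tfrac{1}{\lambda^3 y^3}\,\Im Z_{m\ell, b\ell}(E)$.

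For the second identity I would apply Theorem \ref{prop2.4} to $\HUp$, whose matrix is $\rho\begin{pmatrix} -w & y \\ z & -x \end{pmatrix}$, i.e.\ with parameters $(x', y', z', w') = (-w, y, z, -x)$. Since $-w'/y' = x/y$ and $x'/y' = -w/y$, the theorem now reads $\ch^{x/y}(\HUp(E)) = \adiag(-y^3, y, -\tfrac1y, \tfrac1{y^3})\,\ch^{-w/y}(E)$. Passing to $\HUp[1]$ negates the Chern character, so with $\ch^{-w/y}(E) = (d_0, d_1, d_2, d_3)$ the relevant coordinates of $\ch^{x/y}(\HUp[1](E))$ are $-y d_2$ and $d_1/y$; feeding these into the first formula of Proposition \ref{prop4.3} and simplifying gives $\Im Z_{m\ell, b\ell}(\HUp[1](E)) = \lambda^3 y^3\,\Im Z_{m'\ell, b'\ell}(E)$, by a computation mirroring the first part.

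Finally I would rewrite the prefactors in the stated absolute-value form. Since $\lambda \in \Q_{>0}$ and $y < 0$ we have $|\lambda y|^3 = -\lambda^3 y^3$, whence $\tfrac{1}{\lambda^3 y^3} = -\tfrac{1}{|\lambda y|^3}$ and $\lambda^3 y^3 = -|\lambda y|^3$, recovering the two signs. The argument is entirely bookkeeping rather than conceptual; the one place requiring care, and hence the main obstacle, is matching the twist parameters correctly — $\ch^{-w/y}$ on the $\Up$ side and $\ch^{x/y}$ on the $\HUp$ side — together with the sign introduced by the $[1]$ shift, so that the anti-diagonal matrix of Theorem \ref{prop2.4} acts on the correct twisted character.
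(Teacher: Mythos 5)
Your proposal is correct and follows essentially the same route as the paper: write $\ch^{x/y}(E)$, apply Theorem \ref{prop2.4} to get the anti-diagonal action on the twisted Chern character, feed the middle two coordinates into Proposition \ref{prop4.3}, and use $y<0$, $\lambda>0$ to convert $\tfrac{1}{\lambda^3 y^3}$ into $-\tfrac{1}{|\lambda y|^3}$. The only difference is that the paper proves the first identity explicitly and dismisses the second with ``similarly,'' whereas you carry out the $\HUp[1]$ computation in full (correctly matching the twist parameters $(x',y',z',w')=(-w,y,z,-x)$ and the sign from the shift), which is a welcome completion rather than a deviation.
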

\begin{proof}
Let $\ch^{x/y}(E) = (a_0, a_1, a_2, a_3)$. Then by Proposition
\ref{prop4.3} $ \Im\,Z_{m \ell, b \ell}(E) = \frac{3 \sqrt{3}
  \lambda}{2}(a_2 - \lambda a_1)$.
By Theorem \ref{prop2.4} we have
$\ch^{-w/y}(\Up(E)) = (-y^3a_3, y a_2, -a_1/y , a_0/y^3)$.   So by Proposition \ref{prop4.3}
$$
\Im\,Z_{m'\ell, b' \ell}(\Up(E)) = \frac{3 \sqrt{3}}{2 \lambda y^2}
\left(-\frac{1}{y} a_1 + \frac{1}{\lambda y^2} ya_2 \right) =
\frac{1}{\lambda^3 y^3} \Im\,Z_{m \ell, b \ell}(E).
$$
The result follows as $y<0$. Similarly one can prove the other equality.
\end{proof}

The aim of the next sections is to prove the following equivalences of
abelian categories which generalizes \cite[Theorem 6.10]{MP}.
\begin{thm}
\label{prop4.5}
The FMTs $\Up[1]$ and $\HUp[2]$ give the equivalences of abelian categories
$$
\Up[1]\left(\A_{m \ell, b \ell}\right) \cong \A_{m' \ell, b' \ell}
\text{ and } \HUp[2]\left(\A_{m' \ell, b' \ell}\right) \cong \A_{m
  \ell, b \ell}.
$$
\end{thm}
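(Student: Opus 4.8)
The plan is to deduce the two stated equivalences from two one-sided inclusions, and to prove each inclusion by tracking an object of the source heart through the transform at the level of its cohomology sheaves. First I would record that $\HUp[3]$ is the quasi-inverse of $\Up$, so that $\HUp[2]\circ\Up[1]\cong\id_{D^b(X)}$ and $\Up[1]\circ\HUp[2]\cong\id_{D^b(X)}$; thus $\Up[1]$ and $\HUp[2]$ are mutually inverse autoequivalences of $D^b(X)$. It therefore suffices to establish the two inclusions
$$
\Up[1]\bigl(\A_{m\ell,b\ell}\bigr)\subseteq\A_{m'\ell,b'\ell}
\qquad\text{and}\qquad
\HUp[2]\bigl(\A_{m'\ell,b'\ell}\bigr)\subseteq\A_{m\ell,b\ell}.
$$
Indeed, applying $\Up[1]$ to the second inclusion and comparing with the first yields $\Up[1](\A_{m\ell,b\ell})=\A_{m'\ell,b'\ell}$, and the other equivalence follows symmetrically. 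The two inclusions are interchanged by replacing $\Up$ with $\HUp$ and the unprimed data by the primed data, using the second identity of Proposition~\ref{prop4.4} in place of the first; so I would concentrate on the first inclusion.

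For the inclusion I would peel off the two tilts defining $\A_{m'\ell,b'\ell}$. Since $\A_{m'\ell,b'\ell}$ is a tilt of $\B_{m'\ell,b'\ell}$, which is itself a tilt of $\coh(X)$, membership of $\Up[1](E)$ in $\A_{m'\ell,b'\ell}$ is a condition on the cohomology sheaves $H^i_{\coh(X)}(\Up[1](E))$ together with the $\mu_{m'\ell,b'\ell}$- and $\nu_{m'\ell,b'\ell}$-slopes of their Harder--Narasimhan factors. Because the kernel $\E$ is an honest semi-homogeneous sheaf with $\ch(\E_{\{s\}\times X})=(-y^3,y^2w,-yw^2,w^3)$, a Mukai-type WIT/spectral-sequence analysis as in \cite{MP} shows that a $\mu_{m\ell,b\ell}$-semistable sheaf of suitably controlled slope is $\Up$-WIT$_i$ for a single index $i$, and that $\Up$ matches $\mu_{m\ell,b\ell}$-(semi)stability on the source with $\mu_{m'\ell,b'\ell}$-(semi)stability on the target; the numerical bookkeeping is exactly the anti-diagonal action of Theorem~\ref{prop2.4}, which sends $\ch^{x/y}$ to $\ch^{-w/y}$.

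With this in hand I would assemble the inclusion as follows. For $E\in\A_{m\ell,b\ell}$ the sheaves $H^i_{\coh(X)}(E)$ vanish outside $i\in\{-2,-1,0\}$ and their $\mu_{m\ell,b\ell}$-semistable factors satisfy the slope inequalities supplied by Proposition~\ref{prop4.1}, which is stated precisely for $\B_{\sqrt3 B,D+B}=\B_{m\ell,b\ell}$ with $B=\tfrac{\lambda}{2}\ell$ and $D=\tfrac{x}{y}\ell$. Transforming by $\Up[1]$ and reading off Chern characters through Theorem~\ref{prop2.4} converts these into exactly the inequalities of Proposition~\ref{prop4.2}, which is stated for $\B_{\sqrt3 B',D'-B'}=\B_{m'\ell,b'\ell}$ with $B'=\tfrac{1}{2\lambda y^2}\ell$ and $D'=-\tfrac{w}{y}\ell$; this shows that $\Up[1](E)$ has $\B_{m'\ell,b'\ell}$-cohomology concentrated in degrees $-1$ and $0$, lying in $\F'_{m'\ell,b'\ell}$ and $\T'_{m'\ell,b'\ell}$ respectively. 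The sign in Proposition~\ref{prop4.4} --- the transform reverses the sign of $\Im\,Z$ while the shift $[1]$ restores it --- is precisely what aligns the torsion pairs on the two sides, so that $\Up[1](E)\in\A_{m'\ell,b'\ell}$.

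The main obstacle is the boundary locus $\nu=0$, i.e. the objects with $\Im\,Z_{m\ell,b\ell}=0$: there the Harder--Narasimhan inequalities of Proposition~\ref{prop4.1} degenerate to the equalities of its part (iii), so one cannot separate cohomological degrees by strict slope comparison and must instead invoke the sharp equality conditions in Propositions~\ref{prop4.1} and \ref{prop4.2}, together with the fact that the semi-homogeneous building blocks $\E^{\pm}_s$ of Example~\ref{exmp4.1} --- and more generally the minimal objects of $\SM_{m\ell,b\ell}$ --- are carried to objects of the same type. Controlling this degenerate stratum is the delicate heart of the argument, and it is exactly the locus on which the strong Bogomolov--Gieseker inequality of Theorem~\ref{prop4.6} will subsequently be extracted.
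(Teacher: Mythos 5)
Your reduction of the theorem to the two inclusions $\Up[1](\A_{m\ell,b\ell})\subseteq\A_{m'\ell,b'\ell}$ and $\HUp[2](\A_{m'\ell,b'\ell})\subseteq\A_{m\ell,b\ell}$ via the quasi-inverse property is exactly how the paper concludes (end of Section \ref{section6}), and your final picture --- $\Up[1](E)$ having $\B_{m'\ell,b'\ell}$-cohomology only in degrees $-1$ and $0$, landing in $\F'_{m'\ell,b'\ell}$ and $\T'_{m'\ell,b'\ell}$ respectively --- is the correct one. But the step that gets you there is asserted, not proved, and the assertions it rests on are not true as stated. The transform does not act degree by degree on cohomology: Theorem \ref{prop2.4} controls the Chern character of the \emph{total complex} $\Up(E)$, while membership in $\A_{m'\ell,b'\ell}$ is a condition on the individual sheaves $H^i_{\coh(X)}(\Up(E))$ and the slopes of \emph{their} H-N factors, which are related to the source data only through the Mukai spectral sequence. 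So "transforming the inequalities of Proposition \ref{prop4.1} into those of Proposition \ref{prop4.2}" is not a valid inference. Likewise, a $\mu$-semistable sheaf of controlled slope is generally \emph{not} $\Up$-WIT$_i$ for a single $i$, and the paper never claims that $\Up$ matches slope (semi)stability across the transform; its Propositions \ref{prop5.2}--\ref{prop5.4} prove much weaker statements (vanishing of $\Up^0_{\coh(X)}$ or $\Up^3_{\coh(X)}$, reflexivity of $\Up^0_{\coh(X)}$ and $\Up^1_{\coh(X)}$, and one-sided slope bounds on $\Up^1_{\coh(X)}$, $\Up^2_{\coh(X)}$), obtained from the Mukai and duality spectral sequences and, crucially, from the enlarged class of composite FM functors $\Pi$ of the form \eqref{generalFM} needed to run the contradiction argument in Proposition \ref{prop5.4} with the auxiliary functor $\Xi=\Pi\circ H_{-\lambda}\circ\HPi[3]$. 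Even after all that, Theorem \ref{prop5.5} only yields $\B_{m'\ell,b'\ell}$-cohomology in \emph{three} positions; cutting this down to two is the content of Proposition \ref{prop6.1} and the subsequent table.

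You do correctly identify the boundary locus $\Im\,Z=0$ as the delicate point, but you defer precisely the argument that the paper actually has to supply there: in Proposition \ref{prop6.1}(4)(ii), for instance, the equality case is excluded by taking a Jordan--H\"{o}lder filtration, invoking Lemma \ref{prop5.6} to recognize the factors as (reflexive hulls of) restrictions of universal bundles $\E_{\{x_i\}\times X}$, upgrading $V^{\HUp}_{\coh(X)}(2,3)$ to $V^{\HUp}_{\coh(X)}(3)$ by induction on the filtration length, and then deriving a contradiction from the Spectral Sequence \ref{Bss}. Without this, the torsion pairs on the two sides are not actually aligned and the inclusion is unproven. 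In short: the skeleton of your argument matches the paper's, but the body --- Sections \ref{section5} and \ref{section6} in their entirety --- is missing, and the shortcuts you propose in its place (WIT-ness, stability preservation, degree-by-degree numerical transfer) would fail.
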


\begin{rem}
One can see that $b,m,b',m'$ in the above theorem are exactly the
numbers given for $g=3$ case in Note \ref{prop3.2}. Moreover, the
shifts are compatible with the images of $\OO_x$ under the FMTs that
are minimal objects in the corresponding abelian categories, as
discussed in Example \ref{exmp4.1}.
\end{rem}

The notion of tilt stability can be extended from rational to real
as considered in \cite{Macri2} for
$\mathbb{P}^3$. As a result of the above theorem we get the
following.

\begin{thm}
\label{prop4.6}
The strong B-G type inequality holds for tilt stable objects of $X$ with zero tilt slope.
\end{thm}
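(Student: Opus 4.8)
The plan is to reduce the strong Bogomolov--Gieseker inequality of Conjecture \ref{prop1.3} to the category equivalences of Theorem \ref{prop4.5} by tracking minimal objects through the Fourier--Mukai transform. As observed in the preliminaries, for any $E \in \SC_{\om, B}$ there is a short exact sequence $0 \to E \to E' \to T \to 0$ in $\B_{\om, B}$ with $E' \in \SM_{\om, B}$ and $T \in \coh^0(X)$, so it suffices to establish the inequality $\ch_3^B(E') \le \frac{\om^2}{18}\ch_1^B(E')$ for objects $E'$ in $\SM_{\om, B}$. Moreover, by the paragraph following Conjecture \ref{prop1.3}, the objects in $\SM_{\om, B}[1]$ are exactly the minimal (simple) objects in $\A_{\om, B}$. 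The first step is therefore to fix a tilt-stable object with zero tilt slope, pass to the associated minimal object $E'$, and reduce to proving the numerical inequality on the top part of its twisted Chern character.

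Next I would exploit the fact that minimal objects are preserved by equivalences of abelian categories. By Theorem \ref{prop4.5}, the transform $\Up[1]$ carries $\A_{m\ell, b\ell}$ isomorphically onto $\A_{m'\ell, b'\ell}$, and since an equivalence of abelian categories sends simple objects to simple objects, it sends minimal objects of $\A_{m\ell, b\ell}$ to minimal objects of $\A_{m'\ell, b'\ell}$. Thus for a minimal object $E'$ in one heart, its image $\Up(E')$ (up to the appropriate shift) is a minimal object in the other heart. The key computational input is Theorem \ref{prop2.4} together with Proposition \ref{prop4.4}, which give the explicit anti-diagonal action of $\Up^{\SH}$ on the twisted Chern characters $\ch^{x/y}$ and $\ch^{-w/y}$. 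Writing $\ch^{x/y}(E') = (a_0, a_1, a_2, a_3)$, both $E'$ and $\Up(E')$ being minimal (hence both lying in their respective $\SM$-classes) forces, via Propositions \ref{prop4.1} and \ref{prop4.2} applied to the cohomology sheaves $E'_i = H^i_{\coh(X)}(E')$, sign constraints on the relevant twisted Chern components, and these constraints combine to pin down $a_3$ from below and above.

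The heart of the argument is the comparison of the two inequalities one obtains by viewing the same minimal object $E'$ from the two stability conditions related by the FMT. On one side, the condition that $E'$ lie in $\SM_{m\ell, b\ell}$ constrains $\ch_3^{x/y}(E')$; on the other side, applying the same reasoning to $\Up(E')$ in $\SM_{m'\ell, b'\ell}$ and translating back through the anti-diagonal transform $\adiag\bigl(-y^3, y, -\frac{1}{y}, \frac{1}{y^3}\bigr)$ of Theorem \ref{prop2.4} yields a complementary constraint on the same quantity. Specializing to the B-field $B = b\ell$ and polarization $\om = m\ell$, and using $\om^2 = 3m^2\ell^2 = \frac{9}{4}\lambda^2\ell^2$, the two constraints sandwich $\ch_3^B(E')$ and produce precisely the bound $\frac{\om^2}{18}\ch_1^B(E')$. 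Finally, since the construction is carried out for all rational $b, m$ with the stated form $b = x/y + \lambda/2$, a limiting argument over the rational parameters $\lambda$ (extending tilt stability from rational to real classes as in \cite{Macri2}, invoked immediately before the statement) gives the inequality for arbitrary real tilt-stable objects of zero tilt slope.

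The main obstacle I anticipate is the bookkeeping needed to ensure that the minimal object $E'$ and its transform $\Up(E')$ simultaneously satisfy the hypotheses of Propositions \ref{prop4.1} and \ref{prop4.2}; one must verify that the various cohomology sheaves $E'_{-1}, E'_0$ genuinely fall into the Harder--Narasimhan ranges assumed there, and control the boundary cases where equality holds (which correspond to the semi-homogeneous bundles $\E^\pm_s$ of Example \ref{exmp4.1} that saturate the bound). Matching up the shifts $\Up[1]$ versus $\HUp[2]$ correctly with the gradings of the minimal objects, so that no spurious sign or off-by-one in the Chern character sandwich appears, is where the argument is most delicate.
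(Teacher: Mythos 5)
Your proposal follows essentially the same route as the paper: reduce to minimal objects of $\SM_{m\ell,b\ell}$, transport them through the equivalence of Theorem \ref{prop4.5}, and extract the inequality from the anti-diagonal Chern character action of Theorem \ref{prop2.4} together with the slope constraints of Proposition \ref{prop4.2} on the cohomology sheaves of the transform. Two corrections to your description of the mechanism: the bound is not a two-sided ``sandwich'' --- the untransformed side contributes only the relation $a_2=\lambda a_1$ from the zero-tilt-slope condition, and the inequality $a_3\le\lambda^2 a_1$ comes entirely from Proposition \ref{prop4.2} applied to $F=\Up[1](E)$, the point being that $\ch_3^{x/y}(E)$ reappears as $\ch_0^{-w/y}(F)$ under the anti-diagonal transform so that the slope bound on $F$ directly controls the top Chern character of $E$; and ``minimal goes to minimal'' is not by itself sufficient --- one must first set aside the objects $\E^*_{X\times\{s\}}[1]$ (which satisfy the inequality directly by Example \ref{exmp4.1}) in order to rule out $\Up[1](E[1])$ landing in $\T'_{m'\ell,b'\ell}\cap\coh^0(X)$, and then verify $\Ext^1(\OO_s,\Up[1](E))=0$ so that the image lies in $\SM_{m'\ell,b'\ell}$ rather than merely $\SC_{m'\ell,b'\ell}$.
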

\begin{proof}
By \cite[Proposition 2.4]{Macri2} it is enough to consider a dense
family of classes $\om = \alpha \ell$, $B= \beta \ell$ such that
$\alpha/{\sqrt{3}} \in \Q_{>0}$, $\beta \in \Q$.
Then for given $\alpha, \beta$ one can easily find $x,y \in \Z$,
$\lambda \in \Q$ such that $\gcd(x,y)=1$, $\alpha =  \sqrt{3}
\lambda/{2}$, $\beta= {x}/{y} + {\lambda}/{2}$.
Now using the Euclid algorithm and Proposition \ref{prop2.2} (for example, see  Appendix A of
\cite{BH}), one can find a non-trivial FMT $\Up$ which gives the
equivalence of abelian categories as in Theorem \ref{prop4.5}.
Therefore we only need to prove the claim for objects in $\SC_{m\ell, b\ell}$.

By \cite[Proposition 2.9]{MP} it is enough to check that the strong
B-G type inequality is satisfied by each object in $\SM_{m \ell, b
  \ell} \subset \SC_{m\ell, b\ell}$.
Moreover,  the objects  in $\{M: M \cong \E^*_{X \times\{s\}}[1]
\text{ for some } s \in X \} \subset \SM_{\om,B}$ satisfy the strong
B-G type inequality (see Example \ref{exmp4.1}).
So we only need to check the strong B-G type inequality for objects in
$\SM_{m \ell, b \ell} \setminus \{M: M \cong \E^*_{X \times\{s\}}[1]
\text{ for some } s \in X \}$.

Let  $E \in \SM_{m \ell, b \ell} \setminus \{M: M \cong \E^*_{X
  \times\{s\}}[1] \text{ for some } s \in X \}$.
Then $E[1] \in \A_{m\ell,b\ell}$ is a minimal object and so by the
equivalence in Theorem \ref{prop4.5}
$\Up[1](E[1]) \in \A_{m'\ell,b'\ell}$ is also a minimal object. So
$\Up[1](E[1]) \in \F'_{m'\ell,b'\ell}[1]$ or $\Up[1](E[1]) \in
\T'_{m'\ell,b'\ell}$.
By Proposition \ref{prop4.4}, $\Im \, Z_{m'\ell,b'\ell}(\Up[1](E[1]))
=0$. Suppose $\Up[1](E[1]) \in \T'_{m'\ell,b'\ell}$. Since $\nu_{m'\ell,b'\ell} (\Up[1](E[1])) >0$ and
$\Im \, Z_{m'\ell,b'\ell}(\Up[1](E[1]))=0$, we have $m'^2\ell^2 \ch_1^{b'}(\Up[1](E[1])) = 0$.
By  \cite[Lemma 1.1 (iii)]{MP}  $\Up[1](E[1]) \in \coh^{0}(X)$, and so $E \in \HUp[1](\coh^{0}(X))$. That is, $E$ has a filtration
of objects from $\{M: M \cong \E^*_{X \times\{s\}}[1] \text{ for some
} s \in X \}$; which is not possible.
So $\Up[1](E) \in \SC_{m'\ell,b'\ell}$. Moreover, for any $x \in X$ we have
$$
\Ext^1(\OO_s, \Up[1](E)) \cong \Hom(\OO_s, \Up[2](E)) \cong \Hom(\E^*_{X \times\{s\}}[1],E) = 0
$$
as $E \not \cong  \E^*_{X \times\{s\}}[1]$. Hence  $\Up[1](E) \in \SM_{m'\ell,b'\ell}$.

Let $\ch^{x/y}(E) = (a_0, a_1,a_2, a_3)$. Write $F=\Up[1](E)$ and let $F_i =
H^{i}_{\coh(X)}(F)$. Then $\Im \, Z_{m \ell, b
  \ell}(E) = 0$ implies $a_2 = \lambda a_1$ (see Proposition
\ref{prop4.3}).
Now the strong B-G type inequality says
$$
a_3 - \lambda^2a_1 \le 0.
$$
By Theorem \ref{prop2.4}, $\ch^{-w/y}(F) = (y^3 a_3, -y \lambda a_1,
a_1/y , - a_0/y^3)$. Then by Proposition \ref{prop4.2} we have
$$
\ell^2 \ch_1^{-w/y}(F_{-1}) \le -\frac{1}{\lambda y^2} \ell^3 \ch_0^{-w/y}(F_{-1}) \ \text{ and } \ \ell^2 \ch_1^{-w/y}(F_{0}) \ge 0.
$$
Therefore $\ell^2 \ch_1^{-w/y}(F) \ge -\frac{1}{\lambda y^2} \ell^3
\ch_0^{-w/y}(F)$. That is $- y \lambda a_1 \ge - \frac{1}{\lambda y^2}
y^3 a_3$ and so $\lambda^2 a_1 \ge a_3$ as required.
\end{proof}

Then we can deduce the main theorem of this paper:
\begin{thm}
\label{prop4.7}
Let $\alpha, \beta$ such that  $\alpha/{\sqrt{3}} \in \Q_{>0}$ and
$\beta \in \Q$. Then the pair $(\A_{\alpha \ell, \beta \ell},
Z_{\alpha\ell,\beta\ell})$ defines a Bridgeland stability condition on
$D^b(X)$.
\end{thm}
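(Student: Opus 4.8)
The plan is to read Theorem~\ref{prop4.7} as the final assembly of the machinery of the preceding sections, with Theorem~\ref{prop4.6} providing the one substantive input. Write $\om = \alpha\ell$ and $B = \beta\ell$. First I would check that the hypotheses needed to reduce Conjecture~\ref{prop1.1} to Conjecture~\ref{prop1.2} hold for our choice of $\alpha,\beta$: the assumption $\beta \in \Q$ gives $B \in \NS_\Q(X)$, while $\alpha/\sqrt{3} \in \Q_{>0}$ forces $\alpha^2 = 3(\alpha/\sqrt{3})^2 \in \Q$, so that $\om$ is ample and $\om^2 = \alpha^2\ell^2$ is rational. By the Noetherianness of $\A_{\om,B}$ established as in \cite[Proposition~5.2.2]{BMT}, the equivalence between Conjecture~\ref{prop1.1} and Conjecture~\ref{prop1.2} recorded in the excerpt applies, so it suffices to verify that every $E \in \SC_{\om,B}$ satisfies the weak inequality $\ch_3^B(E) < \tfrac{\om^2}{2}\ch_1^B(E)$.

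Next I would invoke Theorem~\ref{prop4.6}, which is proved for precisely this dense family of classes, to conclude that every tilt stable object with vanishing tilt slope---that is, every object of $\SC_{\om,B}$---satisfies the \emph{strong} inequality $\ch_3^B(E) \le \tfrac{\om^2}{18}\ch_1^B(E)$ of Conjecture~\ref{prop1.3}. The remaining point is the deduction of the weak inequality from the strong one. For $E \in \SC_{\om,B}$ the condition $\nu_{\om,B}(E) = 0$ excludes $\om^2\ch_1^B(E) = 0$, and since $\om^2\ch_1^B(E) \ge 0$ on the whole heart $\B_{\om,B}$ we obtain $\om^2\ch_1^B(E) > 0$. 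As $\tfrac{1}{2} - \tfrac{1}{18} = \tfrac{4}{9} > 0$, the chain
$$
\ch_3^B(E) \;\le\; \frac{\om^2}{18}\,\ch_1^B(E) \;<\; \frac{\om^2}{2}\,\ch_1^B(E)
$$
holds, which is exactly Conjecture~\ref{prop1.2}; combining this with the reduction of the first paragraph gives the theorem.

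I do not anticipate a genuine obstacle at this stage, since all the analytic content lives inside Theorem~\ref{prop4.6}, and hence ultimately in the categorical equivalence of Theorem~\ref{prop4.5}; the work here is organizational. The single point deserving care is the strictness of the last inequality, which relies on the positivity $\om^2\ch_1^B(E) > 0$ on $\SC_{\om,B}$ rather than mere nonnegativity. It is also worth noting that the reduction to $\SM_{\om,B}$ observed after Conjecture~\ref{prop1.3} is consistent with this argument, as $\SM_{\om,B} \subset \SC_{\om,B}$.
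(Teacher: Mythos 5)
Your proposal is correct and follows exactly the route the paper intends (the paper leaves this deduction implicit, saying only ``Then we can deduce the main theorem''): the rationality hypotheses give Noetherianness of $\A_{\om,B}$ and hence the reduction of Conjecture \ref{prop1.1} to Conjecture \ref{prop1.2}, and the strong inequality of Theorem \ref{prop4.6} implies the weak one via $\om^2\ch_1^B(E)>0$ on $\SC_{\om,B}$. Your attention to the strictness of the final inequality is exactly the right point to flag.
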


%%%%%%%%%%%%%%%%%%%%%%%%%%%%%%%%%%%%%%%%%%%%%%%%%%%%%%%%%%%%%%%%%%%%%%%%%%%%%%%%%%%%%%%%%%%%%%%%
%%%%%%%%%%%%%%%%%%%%%%%%%%%%%%%%%%%%%%%%%%%%%%%%%%%%%%%%%%%%%%%%%%%%%%%%%%%%%%%%%%%%%%%%%%%%%%%%
\section{Fourier-Mukai Transforms on $\coh(X)$ and $\B_{\om, B}$}
\label{section5}
%%%%%%%%%%%%%%%%%%%%%%%%%%%%%%%%%%%%%%%%%%%%%%%%%%%%%%%%%%%%%%%%%%%%%%%%%%%%%%%%%%%%%%%%%%%%%%%%
%%%%%%%%%%%%%%%%%%%%%%%%%%%%%%%%%%%%%%%%%%%%%%%%%%%%%%%%%%%%%%%%%%%%%%%%%%%%%%%%%%%%%%%%%%%%%%%%
Let us continue the setting introduced in subsection \ref{subsec4.2}
for the principally polarized abelian threefold $(X,L)$ with Picard
rank one.

For $E \in \coh(X)$  and $q \in \Q$, define the twisted slope
$\mu_q(E) = \mu_{{\ell}/{\sqrt{6}}, q \ell}(E)$.
If $\ch(E) = (a_0, a_1, a_2, a_3)$ then $\mu_q(E) = {a_1}/{a_0} - q$
when $a_0 \ne 0$, and $\mu_q(E) = +\infty$ when $a_0 = 0$. In the rest
of the paper we mostly use $\mu_q$ slope for coherent sheaves and we
simply write $\HN_q = \HN^{\mu}_{{\ell}/{\sqrt{6}}, q  \ell}$.
Moreover define $\T_q = \HN_q(0,+\infty]$ and $\F_q = \HN_q(-\infty, 0]$.

The isomorphisms $\HUp \circ \Up \cong \id_{D^b(X)}[-3]$ and $\Up
\circ \HUp \cong \id_{D^b(X)}[-3]$ give us the following convergence
of spectral
sequences.
%%%%%%%%%%%%%%%%%%%%%%%%
%%%%%%%%%%%%%%%%%%%%%%%%
\begin{mukaispecseq}
\label{mukaiss}
\begin{align*}
E_2^{p,q} & = \HUp^p_{\coh(X)}  \Up^q_{\coh(X)}(E) \Longrightarrow H^{p+q-3}_{\coh(X)}(E), \\
E_2^{p,q} & = \Up^p_{\coh(X)}  \HUp^q_{\coh(X)}(E) \Longrightarrow H^{p+q-3}_{\coh(X)}(E),
\end{align*}
for $E$. Here $\Up^i_{\coh(X)} (-) = H^i_{\coh(X)}(\Up(-))$.
\end{mukaispecseq}

For $E \in \coh(X)$, from the above spectral sequences we immediately
have $\HUp^0_{\coh(X)}  \Up^0_{\coh(X)}(E)$ $=$ $\HUp^1_{\coh(X)}
\Up^0_{\coh(X)}(E)$ $=$ $\HUp^2_{\coh(X)}  \Up^3_{\coh(X)}(E)$ $=$
$\HUp^3_{\coh(X)}  \Up^3_{\coh(X)}(E) = 0$,
$\HUp^0_{\coh(X)}  \Up^1_{\coh(X)}(E) \cong \HUp^2_{\coh(X)}
\Up^0_{\coh(X)}(E)$, and $\HUp^1_{\coh(X)}  \Up^3_{\coh(X)}(E) \cong
\HUp^3_{\coh(X)}  \Up^2_{\coh(X)}(E)$.

Let $\RR \Delta$ denote the derived dualizing functor $\RR \calHom(-,
\OO)[3]$. Let $\TUp$ be the FMT
with  kernel the universal bundle $\E^*$ on $X \times X$. As in
\cite[(3.8)]{Muk2} we have the following isomorphism.

\begin{prop}{\rm(\cite[Lemma 2.2]{PP})}
\label{prop5.1}
$$
 \RR \Delta \circ \TUp  \cong (\Up \circ \RR \Delta) [3]
$$
\end{prop}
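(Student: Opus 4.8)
The statement to prove is the commutation relation
\[
\RR\Delta \circ \TUp \cong (\Up \circ \RR\Delta)[3],
\]
where $\RR\Delta = \RR\calHom(-,\OO)[3]$, $\TUp$ is the FMT with kernel the universal bundle $\E^*$ on $X \times X$, and $\Up$ is the FMT with kernel $\E$. The natural strategy is to reduce everything to a computation of Fourier-Mukai kernels. Both sides of the asserted isomorphism are compositions of Fourier-Mukai functors, hence are themselves Fourier-Mukai functors (by composition of kernels), so it suffices to show that the two kernels agree in $D^b(X \times X)$. The plan is therefore to compute the kernel of $\RR\Delta \circ \TUp$ and the kernel of $(\Up \circ \RR\Delta)[3]$ separately and match them.

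\textbf{Key steps.}
First I would recall that the derived dual $\RR\Delta$ is itself a Fourier-Mukai functor: its kernel is $\OO_\Delta[3]$ composed with relative dualization, or more usefully, that dualizing interacts with a Fourier-Mukai functor $\Phi_{\E}$ by dualizing the kernel together with a Serre-duality/Grothendieck-duality twist. On an abelian variety the canonical bundle is trivial and the Todd class vanishes, which removes all the usual twisting line-bundle corrections and makes the bookkeeping clean. So the second step is to apply the standard formula for how $\RR\calHom(-,\OO)$ conjugates a Fourier-Mukai transform: for a kernel $\E$ on $X \times X$, one gets a Fourier-Mukai functor whose kernel is the derived dual $\E^\vee$ of $\E$ up to a shift and a pullback of the dualizing sheaf, which here is trivial. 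Concretely, I would transport $\RR\Delta$ past $\TUp$ (kernel $\E^*$) and check that the resulting kernel, after the dualization and shift, coincides with the kernel of $\Up$ (namely $\E$) followed by $\RR\Delta$. The degree count $+3 = \dim(X \times X) - \dim X$ should fall out exactly from the relative-dimension shifts in Grothendieck duality for the projections $p_1, p_2 : X \times X \to X$.

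\textbf{Main obstacle.}
The delicate point will be keeping the shifts and the relative duality twists consistent: $\RR\Delta$ carries a shift by $3 = \dim X$, and Grothendieck-Verdier duality for the Fourier-Mukai projections introduces further relative-dimension shifts, so the arithmetic of the overall shift $[3]$ must be tracked carefully rather than guessed. A clean way to control this is to follow Mukai's original argument in \cite[(3.8)]{Muk2}, which establishes precisely such an intertwining for the classical Poincar\'e-bundle transform; the present statement is the analogue for a general universal-bundle kernel $\E$, and the proof should run in parallel, using that the quasi-inverse relation $\HUp[3] \cong \Up^{-1}$ and the self-duality properties of $\E$ on the abelian variety replace the special features of the Poincar\'e bundle. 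I would verify the duality-of-kernels identity $\RR\calHom(\E^*, \OO) \cong \E$ (up to the expected shift) as the computational heart, and then conclude by invoking uniqueness of Fourier-Mukai kernels (Orlov) to upgrade the equality of kernels to the claimed isomorphism of functors.
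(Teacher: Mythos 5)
Your outline is correct, and it is essentially the standard argument. Note that the paper itself offers no proof of this proposition: it simply cites \cite[Lemma 2.2]{PP} and remarks that the isomorphism is as in \cite[(3.8)]{Muk2}. Your kernel-level computation via Grothendieck--Verdier duality for the projection $p_2$ (relative dimension $3$, trivial relative dualizing complex since $X$ is abelian), together with the identification $\RR\calHom(\E^*,\OO)\cong\E$ --- which is immediate here because the universal bundle $\E$ is locally free, so derived dual is ordinary dual --- is precisely the argument underlying the cited result, and the shift bookkeeping works out as you describe: one $[3]$ from each application of $\RR\Delta$ and one $[3]$ from $p_2^!\OO_X\cong\OO_{X\times X}[3]$, matching on both sides. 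Two small points: the Todd class of an abelian variety is trivial (equal to $1$), not vanishing; and the final appeal to Orlov's uniqueness of kernels is unnecessary --- you only need the easy direction, namely that isomorphic kernels induce isomorphic functors, which holds by definition of the Fourier--Mukai functor (Orlov's theorem \cite{Orl1} is the converse and is not needed here).
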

This gives us the convergence of the following spectral sequence.
%%%%%%%%%%%%%%%%%%%%%%%%
%%%%%%%%%%%%%%%%%%%%%%%%
\begin{dualspecseq}
\label{dualss}
$$
\Up^p_{\coh(X)} \left( \calExt^{q+3} (E, \OO) \right) \Longrightarrow \ \ ? \ \ \Longleftarrow \calExt^{p+3}\left( \TUp^{3-q}_{\coh(X)}(E), \OO \right)
$$
for $E \in \coh(X)$.
\end{dualspecseq}
%%%%%%%%%%%%%%%%%%%%%%%%
%%%%%%%%%%%%%%%%%%%%%%%%
\begin{note}
\rm
Let $\ch^q(E) = (a_0,a_1, a_2, a_3)$. Then we have
$\ch^{-q}(\RR \calHom(E, \OO)) = (a_0, -a_1, a_2, -a_3)$.
Therefore for the FMT $\TUp$ we have
$\ch^{w/y}(\TUp(\OO_s)) = \ch^{w/y}(\E^*_{\{s\} \times X}) = (-y^3,0,0,0)$. So
the induced transform is $\TUp^{\SH} = \rho \begin{pmatrix}
-x & y \\
z &  -w
\end{pmatrix}$. Similar results for abelian surfaces have been
considered in \cite[Lemma 6.18]{YY1}.
\end{note}
%%%%%%%%%%%%%%%%%%%%%%%%
%%%%%%%%%%%%%%%%%%%%%%%%

The following proposition generalizes a series of  results in Section 4 of \cite{MP}.
%%%%%%%%%%%%%%%%%%%%%%%%
%%%%%%%%%%%%%%%%%%%%%%%%
\begin{prop}
\label{prop5.2}
We have the following:
\begin{enumerate}[(1)]
\item for $E \in \coh(X)$
\begin{enumerate}[(i)]
\item $\Up_{\coh(X)}^0(E)$ is a reflexive sheaf,
\item $\Up_{\coh(X)}^3(E) \in \T_{-w/y}$,
\item $\Up_{\coh(X)}^0(E) \in \F_{-w/y}$;
\end{enumerate}
\item for $E \in \T_{x/y}$
\begin{enumerate}[(i)]
\item $\Up_{\coh(X)}^3(E) = 0$,
\item if $E \in \coh^{\le 1}(X)$ then $\Up_{\coh(X)}^1(E) \in \T_{-w/y}$,
\item $\Up_{\coh(X)}^2(E) \in \T_{-w/y}$;
\end{enumerate}
\item for $E \in \F_{x/y}$
\begin{enumerate}[(i)]
\item $\Up_{\coh(X)}^0(E) = 0$,
\item $\Up_{\coh(X)}^1(E)$ is a reflexive sheaf,
\item $\Up_{\coh(X)}^1(E) \in \F_{-w/y}$.
\end{enumerate}
\end{enumerate}
\end{prop}
%%%%%%%%%%%%%%%%%%%%%%%%
%%%%%%%%%%%%%%%%%%%%%%%%
\begin{proof}
Proofs of (1), (2) and (3) are identical to the corresponding
propositions in \cite{MP} as listed below after replacing the Chern
characters with their twisted counterparts.
\begin{enumerate}[(1)]
\item (i) \cite[Proposition 4.5]{MP}, (ii) and (iii) \cite[Proposition 4.7]{MP}.
%%%%%%%%%%%%%%%%%%%%%%%%
%%%%%%%%%%%%%%%%%%%%%%%%
\item (i) \cite[Proposition 4.6]{MP}, (ii) \cite[Proposition
  4.10]{MP}, (iii) \cite[Corollary 4.17]{MP}.
%%%%%%%%%%%%%%%%%%%%%%%%
%%%%%%%%%%%%%%%%%%%%%%%%
\item (i) \cite[Proposition 4.6]{MP}, (ii) \cite[Proposition 4.8]{MP},
  (iii) \cite[Proposition 4.16 (i)]{MP}.
\end{enumerate}
\end{proof}
%%%%%%%%%%%%%%%%%%%%%%%%
%%%%%%%%%%%%%%%%%%%%%%%%

For $\lambda \in \Q$,  let $\HH_\lambda$ be the abelian subcategory of
$\coh(X)$ generated by stable semi-homogeneous bundles having the
Chern character  $(a^3, a^2b,a b^2, b^3)$ satisfying $\lambda = b/a$
and $\gcd(a,b)=1$. 
Then $\HH_0$ consists of all homogeneous bundles on $X$.

Let $H_\lambda \in \HH_\lambda$. The functor  $(-)\otimes H_{\lambda}$
is of Fourier-Mukai type with kernel $\delta_*(H_\lambda)$ on $X
\times X$, where $\delta: X \to X \times X$ is the diagonal
embedding. We abuse notation to write  $H_{\lambda}$ for the functor
$(-)\otimes H_{\lambda}$. 
If the rank of $H_\lambda$ is $r$ then the functor $H_{\lambda}$
induces a linear map on $H^{2*}(X, \Q)$ and in matrix form it is given
by 
$$
H_{\lambda }^{\SH} = r ~ \rho \begin{pmatrix}
1 & 0 \\
-\lambda  & 1
\end{pmatrix}.
$$

For some integer $n>0$, let $\Phi_j$, $j=1,\ldots, n+1$ be any
collection of FMTs
in $ (X \times \HX) \rtimes \widetilde{\SL(2,\Z)}$. For $\lambda_i \in \Q$, $i=1, \ldots, n$  let
$H_{\lambda_i} \in \HH_{\lambda_i}$. 
Consider the functor $\Pi: D^b(X) \to D^b(X)$ defined by
\begin{equation}
\label{generalFM}
\Pi  = \Phi_{n+1} \circ H_{\lambda_n} \circ \Phi_{n} \circ \cdots
\circ H_{\lambda_2} \circ \Phi_2 \circ  H_{\lambda_1} \circ
\Phi_1[p]. 
\tag{\ding{93}}
\end{equation}
Since the image of a skyscraper sheaf  $\OO_s$ under any $\Phi_i$ is a
(shift of) a semi-homogeneous bundle and being semi-homogeneous is
closed under tensoring, the image of any skyscraper sheaf $\OO_s$ under a composition of
$(-)\otimes H_{\lambda_i}$s and FMTs in $(X \times \HX) \rtimes \widetilde{\SL(2,\Z)}$ has
$\coh(X)$-cohomology concentrated in one position. So we can fix $p$
to be the unique integer for which $\Pi^i_{\coh(X)}(\OO_s) = 0$ for $i \ne
0$. 
Then $\Pi$ is an FM functor with kernel a sheaf $\U$ on $X \times X$.
Hence for any $E \in \coh(X)$, $\Pi(E)$ can have non-trivial $\coh(X)$ cohomology at $0,1,2,3$ positions only.
Also one can show that $\Pi$ induces a linear map $\Pi^{\SH}$ on $H^{2*}(X,\Q)$ given by
$$
\Pi^{\SH} = a ~ \rho \begin{pmatrix}
x & y \\
z  & w
\end{pmatrix}
$$
for some $a \in \Z_{>0}$ and $x,y,z,w \in \Q$ with $xw-yz =1$. So
$\mathcal{U}_{\{s\} \times X} = \Pi(\OO_s)$ has the Chern character 
$a(-y^3, y^2w, -yw^3, w^3)$.
Assume $\mathcal{U}_{\{s\} \times X}$ is not torsion, i.e. $y <0$.

The functor $\HPi: D^b(X) \to D^b(X)$ is defined by
$$
\HPi =  \left(\Phi_{1}\right)^{-1} \circ H_{\lambda_1}^* \circ
\left(\Phi_{2}\right)^{-1}  \circ \cdots \circ H_{\lambda_{n-1}}^*
\circ \left(\Phi_{n}\right)^{-1} \circ  H_{\lambda_n}^* \circ
\left(\Phi_{n+1}\right)^{-1} [-p-3]. 
$$
One can check that $\HPi$ is an FM functor with kernel $\Sigma^* \RR
\calHom(\U, \OO)$ on $X \times X$ (as before, $\Sigma:X \times X \to  X\times
X$ switches the factors).
Moreover, $\HPi(\OO_s) \in \coh(X)$ for any $s \in X$. So the FM
kernel of $\HPi$ is $\Sigma^* \U^*$. Also $\U$ is locally free as
$\U_{\{s\} \times X}$ and $\U_{X \times \{s\} }^*$ are locally free. 
Moreover, for any $E \in \coh(X)$, $\HPi(E)$ can have non-trivial $\coh(X)$ cohomology at $0,1,2,3$ positions only,
and $\HPi[3]$ is  left and right adjoint to $\Pi$ (and vice versa).
The FM functor $\HPi$ induces a linear map $\HPi^{\SH}$ on $H^{2*}(X, \Q)$  given by
$$
\HPi^{\SH} = a ~ \rho \begin{pmatrix}
-w & y \\
z  & -x
\end{pmatrix}.
$$

We have the isomorphisms
$$
\HPi \circ \Pi \cong H_0 [-3],  \ \ \text{and} \ \ \Pi \circ \HPi \cong H_0 [-3]
$$
for some homogeneous bundles $H_0 \in \HH_0$ with $\OO$ as a direct summand of $H_0$.
Therefore we have the convergence of spectral sequences
\begin{align*}
\label{mukaitypess}
E_2^{p,q} & = \HPi^p_{\coh(X)}  \Pi^q_{\coh(X)}(E) \Longrightarrow H^{p+q-3}_{\coh(X)}(H_0 E), \tag{\ding{61}} \\
E_2^{p,q} & = \Pi^p_{\coh(X)}  \HPi^q_{\coh(X)}(E) \Longrightarrow H^{p+q-3}_{\coh(X)}(H_0 E),
\end{align*}
for $E$. Here $\Pi^i_{\coh(X)} (-) = H^i_{\coh(X)}(\Pi(-))$.

For any $E, F \in D^b(X)$ we have
\begin{align*}
\Hom(E,F) &  \hookrightarrow \Hom(E, H_0F),  \ \text{since }  \OO \text{ is a direct summand of } H_0 \\
          &  \cong \Hom(E, \HPi \circ \Pi(F)[3])  \\
          &  \cong \Hom (\Pi(E), \Pi(F)), \ \text{from the adjointness of } \HPi[3] \text{ and } \Pi.
\end{align*}

Note that $ \RR \Delta \circ  H_{\lambda}^*  \cong H_{\lambda} \circ \RR \Delta$.
Therefore by iteratively using Proposition \ref{prop5.1} for each of the FMTs $\Phi_j$ together with the above isomorphism we have
$$
 \RR \Delta \circ \TPi  \cong (\Pi \circ \RR \Delta) [3]
$$
for some FM functor $\TPi$ which is of the form \eqref{generalFM}. Moreover, the FM kernel of $\TPi$ is $\U^*$ on $X \times X$ and the induced linear map on $H^{2*}(X , \Q)$ of $\TPi$ is
$$
\TPi^{\SH} = a ~ \rho \begin{pmatrix}
-x & y \\
z  & -w
\end{pmatrix}.
$$
The above isomorphism involving the derived dualizing functor gives us the convergence of the spectral sequence
\begin{equation}
\label{dualtypess}
\Pi^p_{\coh(X)} \left( \calExt^{q+3} (E, \OO) \right) \Longrightarrow \ \ ? \ \ \Longleftarrow \calExt^{p+3}\left( \TPi^{3-q}_{\coh(X)}(E), \OO \right),
\tag{\ding{61}\ding{61}}
\end{equation}
 for $E \in \coh(X)$.

The following proposition generalizes the results on FMTs in Proposition \ref{prop5.2} for FM functors of the form \eqref{generalFM}.
%%%%%%%%%%%%%%%%%%%%%%%%
%%%%%%%%%%%%%%%%%%%%%%%%
\begin{prop}
\label{prop5.3}
We have the following:
\begin{enumerate}[(1)]
\item for $E \in \coh(X)$
\begin{enumerate}[(i)]
\item $\Pi_{\coh(X)}^0(E)$ is a reflexive sheaf,
\item $\Pi_{\coh(X)}^3(E) \in \T_{-w/y}$,
\item $\Pi_{\coh(X)}^0(E) \in \F_{-w/y}$;
\end{enumerate}
\item for $E \in \T_{x/y}$
\begin{enumerate}[(i)]
\item $\Pi_{\coh(X)}^3(E) = 0$,
\item if $E \in \coh^{\le 1}(X)$ then $\Pi_{\coh(X)}^1(E) \in \T_{-w/y}$,
\item $\Pi_{\coh(X)}^2(E) \in \T_{-w/y}$;
\end{enumerate}
\item for $E \in \F_{x/y}$
\begin{enumerate}[(i)]
\item $\Pi_{\coh(X)}^0(E) = 0$,
\item $\Pi_{\coh(X)}^1(E)$ is a reflexive sheaf,
\item $\Pi_{\coh(X)}^1(E) \in \F_{-w/y}$.
\end{enumerate}
\end{enumerate}
\end{prop}
%%%%%%%%%%%%%%%%%%%%%%%%
%%%%%%%%%%%%%%%%%%%%%%%%
\begin{proof}
The proofs are similar to that of Proposition \ref{prop5.2} or the series of similar results in Section 4 of \cite{MP}. To illustrate the similarity, we shall give the proof for (1)(i) as follows.

Let $s \in X$. Then for $0 \le i \le 2$, we have
\begin{align*}
\Hom(\OO_s, \Pi^0_{\coh(X)}(E)[i])& \hookrightarrow \Hom(\HPi (\OO_s) , \HPi \, \Pi^0_{\coh(X)}(E)[i]) \\
                                  & \cong \Hom(\U^*_{X \times \{s\}}, \HPi^2_{\coh(X)} \Pi^0_{\coh(X)}[-2+i])
\end{align*}
from the convergence of the Spectral Sequence \eqref{mukaitypess} for $E$.
So $\Hom(\OO_s, \Pi^0_{\coh(X)}(E)) = \Ext^1(\OO_s, \Pi^0_{\coh(X)}(E))=  0$, and
\begin{align*}
\Ext^2(\OO_s, \Pi^0_{\coh(X)}(E)) & \hookrightarrow \Hom(\U^*_{X \times \{s\}}, \HPi^2_{\coh(X)} \Pi^0_{\coh(X)}(E)) \\
                   & \cong \Hom(\U^*_{X \times \{s\}}, \HPi^0_{\coh(X)} \Pi^1_{\coh(X)}(E)),  \ \ \mbox{from Spectral Sequence \eqref{mukaitypess}}\\
                   & \cong \Hom(\HPi(\OO_s), \HPi \, \Pi^1_{\coh(X)}(E)),  \\
                   & \cong \Hom(\OO_s, \Pi \,\HPi \, \Pi^1_{\coh(X)}(E)[3]), \ \ \text{from the adjointness of } \HPi[3] \text{ and } \Pi \\
                   & \cong \Hom(\OO_s,  H_0 \Pi^1_{\coh(X)}(E)).
\end{align*}
Hence $\dim \{ s \in X: \Ext^2(\OO_s, \Pi^0_{\coh(X)}(E) ) \ne 0 \} \le 0$. Therefore $\Pi^0_{\coh(X)}(E)$ is a reflexive sheaf.
\end{proof}
%%%%%%%%%%%%%%%%%%%%%%%%
%%%%%%%%%%%%%%%%%%%%%%%%
\begin{prop}
\label{prop5.4}
For $\lambda \in \Q_{> 0}$
\begin{enumerate}[(i)]
\item if $E \in \HN_{x/y}(0,\lambda]$ then $\Pi_{\coh(X)}^0(E) \in \HN_{-w/y}(-\infty, -\frac{1}{2\lambda y^2}]$,
\item if $E \in \HN_{x/y}[-\lambda,0]$  then $\Pi_{\coh(X)}^3(E) \in \HN_{-w/y}[\frac{1}{2\lambda y^2}, +\infty]$.
\end{enumerate}
\end{prop}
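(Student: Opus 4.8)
The plan is to establish (i) directly and then obtain (ii) from it by Serre duality. Throughout, recall that $E\in\HN_{x/y}(0,\lambda]$ lies in $\T_{x/y}$, so $\Pi^3_{\coh(X)}(E)=0$ by Proposition \ref{prop5.3}(2)(i), while Proposition \ref{prop5.3}(1)(iii) already gives $\Pi^0_{\coh(X)}(E)\in\F_{-w/y}$, i.e. $\mu^+_{-w/y}(\Pi^0_{\coh(X)}(E))\le 0$; the whole content of (i) is to push this bound down to $-1/(2\lambda y^2)$. First I would reduce to the case that $E$ is $\mu_{x/y}$-semistable with $\mu_{x/y}(E)=t\in(0,\lambda]$. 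Indeed $\HN_{-w/y}(-\infty,c]$ is closed under subobjects and extensions, and for a short exact sequence $0\to E'\to E\to E''\to 0$ the long exact cohomology sequence presents $\Pi^0_{\coh(X)}(E)$ as an extension of a subsheaf of $\Pi^0_{\coh(X)}(E'')$ by $\Pi^0_{\coh(X)}(E')$; iterating along the Harder--Narasimhan filtration of $E$ therefore reduces (i) to its semistable factors.

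Next I would argue by contradiction. Suppose $s:=\mu^+_{-w/y}(\Pi^0_{\coh(X)}(E))>-1/(2\lambda y^2)$ and let $B\hookrightarrow\Pi^0_{\coh(X)}(E)$ be the maximal destabilizing subsheaf, so $B$ is $\mu_{-w/y}$-semistable of slope $s$, with $s\le 0$ by the remark above, whence $B\in\HN_{-w/y}(-1/(2\lambda y^2),0]$. Since $\Pi(E)$ has cohomology only in degrees $\ge 0$, the truncation triangle together with the vanishing of $\Hom_{D^b(X)}(A,C)$ whenever $A$ is concentrated in degrees $\le 0$ and $C$ in degrees $\ge 1$ gives $\Hom(B,\Pi^0_{\coh(X)}(E))\cong\Hom_{D^b(X)}(B,\Pi(E))$. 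Using that $\HPi[3]$ is left adjoint to $\Pi$ this is $\Hom_{D^b(X)}(\HPi[3](B),E)$, and truncating $\HPi[3](B)$ (whose cohomology sits in degrees $-3,\dots,0$ with $H^0=\HPi^3_{\coh(X)}(B)$) yields a surjection $\Hom(\HPi^3_{\coh(X)}(B),E)\twoheadrightarrow\Hom_{D^b(X)}(\HPi[3](B),E)$. Hence the inclusion $B\hookrightarrow\Pi^0_{\coh(X)}(E)$ lifts to a nonzero morphism $\HPi^3_{\coh(X)}(B)\to E$.

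The crux is to locate $\HPi^3_{\coh(X)}(B)$ sharply. Since $\HPi$ is again of the form \eqref{generalFM}, with parameters $(-w,y,z,-x)$, statement (ii) applied to $\HPi$ is available: for $s<0$ one has $B\in\HN_{-w/y}[-\mu,0]$ with $\mu=-s<1/(2\lambda y^2)$, which yields $\HPi^3_{\coh(X)}(B)\in\HN_{x/y}[1/(2\mu y^2),+\infty]$ where $1/(2\mu y^2)>\lambda$; for $s=0$ one may take $\mu\to 0^+$, forcing $\HPi^3_{\coh(X)}(B)$ to be torsion. In either case a nonzero map from $\HPi^3_{\coh(X)}(B)$, all of whose $\mu_{x/y}$-slopes strictly exceed $\lambda\ge t$ (or which is torsion), into the $\mu_{x/y}$-semistable torsion-free sheaf $E$ of slope $t$ is impossible, which is the desired contradiction. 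Finally (ii) follows by dualizing: the isomorphism $\RR\Delta\circ\TPi\cong(\Pi\circ\RR\Delta)[3]$ of the excerpt gives $\Pi(E)\cong\RR\Delta\,\TPi\,\RR\Delta(E)[-3]$, so $\Pi^3_{\coh(X)}(E)$ is the reflexive dual of $\TPi^0_{\coh(X)}(\RR\calHom(E,\OO))$; as $\RR\calHom(-,\OO)$ negates both the twist and the degree-one part of the Chern character (the Note preceding this proposition), the hypothesis $E\in\HN_{x/y}[-\lambda,0]$ becomes the hypothesis of (i) for $\TPi$ (whose parameters are $(-x,y,z,-w)$), and the conclusion $\HN_{w/y}(-\infty,-1/(2\lambda y^2)]$ of (i) dualizes to $\HN_{-w/y}[1/(2\lambda y^2),+\infty]$.

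The main obstacle I expect is twofold. The genuine analytic difficulty is the \emph{sharp constant}: proving the uniform bound with the exact factor $1/(2\lambda y^2)$ and the correct power of $y$ requires feeding the anti-diagonal Chern-character formula of Theorem \ref{prop2.4} for $\ch^{-w/y}(\Pi(E))$ into Bogomolov--Gieseker type inequalities, in the spirit of the cohomology-slope estimates of Propositions \ref{prop4.1} and \ref{prop4.2} and the explicit minimal objects of Example \ref{exmp4.1}. The second, logical, obstacle is that the proof of (i) for $\Pi$ invokes (ii) for $\HPi$, and (ii) for $\Pi$ is obtained from (i) for $\TPi$; to avoid circularity these must be organized as a single statement proved simultaneously for all functors of the form \eqref{generalFM}, most cleanly by induction on the number of elementary factors $\Phi_j$ and $H_{\lambda_i}$, tracking the (linear) slope transformation through each factor.
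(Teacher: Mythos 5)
Your reduction of (ii) to (i) by duality is exactly the paper's argument, and the adjunction step in (i) (lifting $B\hookrightarrow\Pi^0_{\coh(X)}(E)$ to a nonzero map $\HPi^3_{\coh(X)}(B)\to E$) is sound. But the core of your proof of (i) is circular, and the fix you propose does not repair it. You prove (i) for $\Pi$ (matrix $(x,y,z,w)$) by invoking (ii) for $\HPi$ (matrix $(-w,y,z,-x)$); (ii) for $\HPi$ would in turn be deduced from (i) for $\widetilde{\HPi}$ (matrix $(w,y,z,x)$), which needs (ii) for its own hat, namely $\TPi$ (matrix $(-x,y,z,-w)$), which needs (i) for $\Pi$ again. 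This is a genuine cycle of length four. An induction ``on the number of elementary factors'' cannot break it, because $\HPi$ and $\TPi$ are compositions of exactly the same number of FMTs and twists as $\Pi$ — they sit at the same level of any such induction. Nor do the slope parameters decrease: you pass from $\lambda$ to some $\mu<\frac{1}{2\lambda y^2}$ and then to some $\nu<\frac{1}{2\mu y^2}$, and since $\frac{1}{2\mu y^2}>\lambda$ the parameter can return to (or exceed) its starting value. So there is no well-founded measure, no base case, and the sharp constant $\frac{1}{2\lambda y^2}$ — which you correctly identify as the whole content of the proposition — is never actually established anywhere in your scheme.

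What the paper does instead is make the argument for (i) self-contained. It introduces the auxiliary FM functor $\Xi=\Pi\circ H_{-\lambda}\circ\HPi[3]$ for a semi-homogeneous bundle $H_{-\lambda}\in\HH_{-\lambda}$; since $\Xi$ is again of the form \eqref{generalFM}, Proposition \ref{prop5.3} (which is proved independently) applies to it. The isomorphism $\Xi\circ\Pi\cong\Pi\circ H_{-\lambda}\circ H_0$ and the vanishing $\Pi^0_{\coh(X)}(H_{-\lambda}H_0E)=0$ (because twisting by $H_{-\lambda}$ moves $E$ into $\F_{x/y}$) force $\Xi^0_{\coh(X)}$ to kill the offending maximal piece $F\in\HN_{-w/y}(-\frac{1}{2\lambda y^2},0]$ of $\Pi^0_{\coh(X)}(E)$. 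Combining the resulting constraints on $\Xi^i_{\coh(X)}(F)$ with the inequality $ya_2\le 0$ (obtained by applying $\HPi$ to the defining short exact sequence of $F$) and an explicit computation of $\ch_1$ of $\Xi(F)$ in the appropriate twist yields the contradiction, and this computation is precisely where the constant $\frac{1}{2\lambda y^2}$ enters. If you want to salvage your adjunction-based approach, you would need an independent, non-circular proof of the sharp location of $\HPi^3_{\coh(X)}(B)$; as it stands, that step is exactly the missing content.
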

\begin{proof}
(i) \ The following proof  has a similar structure to that of \cite[Proposition 4.18]{MP}.

Let $E \in \HN_{x/y}(0,\lambda]$.
Pick a bundle $H_{-\lambda} \in \HH_{-\lambda}$ of rank $r$.
Let $\Xi$ be the FM functor defined by 
$$
\Xi = \Pi \circ H_{-\lambda} \circ \HPi[3].
$$
The induced liner map of $\Xi$ on $H^{2*}(X, \Q)$ is
$$
\Xi^{\SH} = - ra^2 ~ \rho \begin{pmatrix}
x & y \\
z & w
\end{pmatrix} \begin{pmatrix}
1 & 0 \\
\lambda & 1
\end{pmatrix} \begin{pmatrix}
-w & y \\
z & -x
\end{pmatrix} = ra^2 ~ \rho \begin{pmatrix}
1+\lambda yw & -\lambda y^2 \\
\lambda w^2 & 1-\lambda yw
\end{pmatrix}.
$$
The isomorphism $\Xi \circ \Pi \cong \Pi \circ H_{-\lambda} \circ H_0$ gives us the
convergence of spectral sequence:
$$
E_2^{p,q} = \Xi^p_{\coh(X)} \Pi^q_{\coh(X)}(E) \Longrightarrow \Pi^{p+q}_{\coh(X)} (H_{-\lambda}'E)
$$
for $E$. Here $H_{-\lambda}' = H_{-\lambda}H_0$. So $H_{-\lambda}'E \in \HN_{x/y}(-\lambda,0]$.
By (3)(i) of Proposition \ref{prop5.3} $\Pi_{\coh(X)}^0(H_{-\lambda}'E)  = 0$.
Now from the convergence of the above spectral sequence, we have
$\Xi^0_{\coh(X)} \Pi^0_{\coh(X)}(E) = 0$ and
$\Xi^1_{\coh(X)} \Pi^0_{\coh(X)}(E) \hookrightarrow \Pi_{\coh(X)}^1(H_{-\lambda}'E)$.
By (3)(iii) of Proposition~\ref{prop5.3}, $\Pi_{\coh(X)}^1(H_{-\lambda}'E) \in \HN_{-w/y}(-\infty, 0]$.
Since we have $\HN_{-w/y}(-\infty, 0]  \subset \HN_{(1-\lambda yw)/\lambda y^2}(-\infty, 0]$,

\begin{equation}
\label{firstbound}
\Xi^1_{\coh(X)} \Pi^0_{\coh(X)}(E) \in \HN_{(1-\lambda yw)/\lambda y^2}(-\infty, 0].
\tag{\ding{169}}
\end{equation}

By the H-N property $\Pi_{\coh(X)}^0(E) \in \HN_{-w/y}(-\infty, 0]$ fits into the $\coh(X)$-SES
\begin{equation}
\label{mu-ses}
0 \to F \to \Pi_{\coh(X)}^0(E) \to G \to 0,
\tag{\ding{168}}
\end{equation}
for some $F \in \HN_{-w/y}(-\frac{1}{2\lambda y^2},0]$ and $G \in
  \HN_{-w/y}(-\infty , -\frac{1}{2\lambda y^2}]$. Assume $F \ne 0$ for a
    contradiction.
Then we can write $\ch^{-w/y}(F) = (a_0, \mu a_0, a_2, a_3)$ for $0 \ge \mu > - \frac{1}{2\lambda y^2}$.

By applying the FM functor $\HPi$ to the $\coh(X)$-SES \eqref{mu-ses} we have the
following long exact sequence in $\coh(X)$:
$$
0 \to \HPi_{\coh(X)}^1(G) \to \HPi_{\coh(X)}^2(F) \to \HPi_{\coh(X)}^2\Pi_{\coh(X)}^0(E) \to \cdots.
$$
By Spectral Sequence \eqref{mukaitypess} $\HPi_{\coh(X)}^2 \Pi_{\coh(X)}^0(E) \cong \HPi_{\coh(X)}^0 \Pi_{\coh(X)}^1(E)$ 
and so by (1)(iii) of Proposition \ref{prop5.3} it is in $\HN_{x/y}(-\infty, 0]$.
Also by (3)(iii) of Proposition \ref{prop5.3} $\HPi_{\coh(X)}^1(G)
  \in \HN_{x/y}(-\infty, 0]$. Therefore $\HPi_{\coh(X)}^2(F) \in
    \HN_{x/y}(-\infty, 0]$.
By (1)(ii) of Proposition \ref{prop5.3} $\HPi_{\coh(X)}^3(F) \in \HN_{x/y}(0, +\infty]$.
Therefore $\ell^2 \ch^{x/y}_1(\HPi(F)) \le 0$, and so $ya_2 \le 0$.

Since $\Xi_{\coh(X)}^0(F) \hookrightarrow \Xi^0_{\coh(X)}
\Pi^0_{\coh(X)}(E)$ we have $\Xi_{\coh(X)}^0(F) =0$.
Moreover, since  $F \in \HN_{-w/y}(- \frac{1}{2\lambda y^2},0] =
  \HN_{-(1+\lambda yw)/\lambda y^2}(\frac{1}{2\lambda y^2},\frac{1}{\lambda y^2}]$ we have
    $\Xi_{\coh(X)}^3(F) = 0$.
Apply the FM functor $\Xi$ to  $\coh(X)$-SES  \eqref{mu-ses} and consider the long exact sequence of $\coh(X)$-cohomologies:
$$
0 \to \Xi^0_{\coh(X)}(G) \to \Xi^1_{\coh(X)}(F) \to \Xi^1_{\coh(X)} \Pi^0_{\coh(X)}(E)  \to \cdots.
$$
By \eqref{firstbound}, $\Xi^1_{\coh(X)} \Pi^0_{\coh(X)}(E) \in \HN_{(1-\lambda yw)/\lambda y^2}(-\infty, 0]$, and
by (1)(iii) of Proposition~\ref{prop5.3}, $\Xi^0_{\coh(X)}(G) \in  \HN_{(1-\lambda yw)/\lambda y^2}(-\infty, 0]$.
Therefore, $\Xi_{\coh(X)}^1(F) \in \HN_{(1-\lambda yw)/\lambda y^2}(-\infty,0]$.
By (2)(iii) of Proposition~\ref{prop5.3},
$\Xi_{\coh(X)}^2(F) \in \HN_{(1-\lambda yw)/\lambda y^2}(0,+\infty]$.
So 
$$
\ell^2 \ch_1^{(1-\lambda yw)\ell/\lambda y^2}(\Xi(F)) \ge 0.
$$   
    
On the other hand, we have
\begin{align*}
\ch^{(1-\lambda yw)/\lambda y^2}(\Xi(F)) & = ra^2 ~ \rho \begin{pmatrix}
1 & 0\\
\frac{1-\lambda yw}{\lambda y^2} & 1
\end{pmatrix} \begin{pmatrix}
1+\lambda yw & -\lambda y^2 \\
\lambda w^2 & 1-\lambda yw
\end{pmatrix} \begin{pmatrix}
1 & 0\\
\frac{w}{y} & 1
\end{pmatrix} \ch^{-w/y}(F) \\
& = ra^2 ~ \rho \begin{pmatrix}
1 & -\lambda y^2 \\
\frac{1}{\lambda y^2} & 0
\end{pmatrix} \ch^{-w/y}(F) \\
& = ra^2 \begin{pmatrix}
* & * & * & *  \\
-\frac{1}{\lambda y^2} & -2 & -\lambda y^2 & 0 \\
* & * & * & *  \\
* & * & * & *
\end{pmatrix} \begin{pmatrix}
a_0 \\
\mu a_0 \\
a_2 \\
a_3
\end{pmatrix} \\
& =  ra^2 \left(*, -2a_0 \left( \mu+ \frac{1}{2\lambda y^2}\right) - \lambda ^2y^2 a_2, *, * \right).
\end{align*}
Here $a_0 >0$, $ \left( \mu+ \frac{1}{2\lambda y^2} \right)>0$, $y a_2 \le 0$
and so $\ell^2 \ch_1^{(1-\lambda yw)/\lambda y^2}(\Xi(F))  < 0$.  This is the
required contradiction. \\
%%%%%%%%%%%%%%%%%%%%%%%%
%%%%%%%%%%%%%%%%%%%%%%%%

\noindent (ii) \ We shall give a proof which is similar to that of
\cite[Proposition 4.19]{MP}.

Let $E \in \HN_{x/y}[-\lambda ,0]$ for some  $\lambda  \in \Q_{> 0}$.
From Spectral Sequence \eqref{dualtypess} for $E$ we have
$$
\left( \Pi_{\coh(X)}^3(E) \right)^* \cong \TPi_{\coh(X)}^0(E^*).
$$
Here $\TPi^{\SH} = a ~ \rho  \begin{pmatrix}
-x & y \\
z & -w
\end{pmatrix}$ and we have $E^{*} \in \HN_{-x/y}[0,\lambda ]$. So by (3)(i) of Proposition \ref{prop5.3} and the above result we have
 $\TPi_{\coh(X)}^0(E^*)\in \HN_{w/y}(-\infty, -\frac{1}{2\lambda y^2}]$.
Hence $\left( \Pi_{\coh(X)}^3(E) \right)^* \in \HN_{w/y}(-\infty, -\frac{1}{2\lambda y^2}]$ and so
$\Pi_{\coh(X)}^3(E) \in \HN_{-w/y}[\frac{1}{2\lambda y^2}, +\infty]$ as required.

\end{proof}

%%%%%%%%%%%%%%%%%%%%%%%%
%%%%%%%%%%%%%%%%%%%%%%%%

Recall, for some fixed $\lambda \in \Q_{>0}$, $b   = \left(
\frac{x}{y} + \frac{ \lambda}{2} \right)$, $m =  \frac{\sqrt{3}
  \lambda}{2}$,
$b'  = \left(- \frac{w}{y} - \frac{1}{2 \lambda y^2}\right)$ and $m' =
\frac{\sqrt{3}}{2 \lambda y^2}$. Let $\Up, \HUp$ be the FMTs as introduced in subsection \ref{subsec4.2}.

\begin{thm}
\label{prop5.5}
We have the following:
\begin{itemize}
\item[(i)] $\Up \left( \B_{m \ell, b \ell} \right)  \subset \langle \B_{m'
  \ell, b' \ell} ,\B_{m' \ell, b' \ell}[-1], \B_{m' \ell, b' \ell}[-2]
  \rangle$, and
\item[(ii)] $\HUp [1] \left( \B_{m' \ell, b' \ell} \right)  \subset \langle
  \B_{m \ell, b \ell} ,\B_{m \ell, b \ell}[-1], \B_{m \ell, b
    \ell}[-2] \rangle$.
\end{itemize}
\end{thm}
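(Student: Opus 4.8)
The plan is to verify each inclusion on the two pieces of the torsion pair defining the source heart and then to assemble by extensions. Since the first tilt depends only on the twist, $\B_{m\ell,b\ell}=\langle\F_b[1],\T_b\rangle$, and every $E\in\B_{m\ell,b\ell}$ sits in a triangle $F[1]\to E\to T\to F[2]$ with $T=H^0_{\coh(X)}(E)\in\T_b$ and $F=H^{-1}_{\coh(X)}(E)\in\F_b$. As the target $\langle\B_{m'\ell,b'\ell},\B_{m'\ell,b'\ell}[-1],\B_{m'\ell,b'\ell}[-2]\rangle$ is extension-closed, for (i) it is enough to place $\Up(T)$ and $\Up(F)[1]$ in it, and symmetrically for (ii) with $\HUp[1]$ and the pieces $T'\in\T_{b'}$, $F'\in\F_{b'}$.

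The second ingredient is a $\coh(X)$-cohomological description of the band. Writing $\B'=\B_{m'\ell,b'\ell}$ and unwinding the tilted $t$-structure, an object $G$ lies in $\langle\B',\B'[-1],\B'[-2]\rangle$ exactly when $H^i_{\coh(X)}(G)=0$ for $i\notin\{-1,0,1,2\}$, $H^{-1}_{\coh(X)}(G)\in\F_{b'}$ and $H^2_{\coh(X)}(G)\in\T_{b'}$, the two middle cohomologies being unconstrained. Everything therefore reduces to controlling the extremal cohomology sheaves of the images, using that $\Up$ and $\HUp$ have cohomological amplitude $[0,3]$ on sheaves.

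For (i): if $T\in\T_b$ then $T\in\T_{x/y}$ (as $b>x/y$), so $\Up^3_{\coh(X)}(T)=0$ by Proposition \ref{prop5.2}(2)(i), its bottom cohomology vanishes, and $\Up^2_{\coh(X)}(T)\in\T_{-w/y}\subset\T_{b'}$ by Proposition \ref{prop5.2}(2)(iii) (a strict inclusion, since $b'<-w/y$); this puts $\Up(T)$ in the band. If $F\in\F_b$, then $\Up(F)[1]$ has top cohomology $\Up^3_{\coh(X)}(F)\in\T_{-w/y}\subset\T_{b'}$ by Proposition \ref{prop5.2}(1)(ii), and for its bottom cohomology I split $F$ by its $\mu_{x/y}$-Harder--Narasimhan filtration, $0\to F_{>0}\to F\to F_{\le 0}\to 0$ with $F_{\le 0}\in\F_{x/y}$ and $F_{>0}\in\HN_{x/y}(0,\lambda/2]$; since $\Up^0_{\coh(X)}(F_{\le 0})=0$ by Proposition \ref{prop5.2}(3)(i), the long exact sequence gives $\Up^0_{\coh(X)}(F)\cong\Up^0_{\coh(X)}(F_{>0})$, which lies in $\HN_{-w/y}(-\infty,-\frac{1}{2\lambda y^2}]=\F_{b'}$ by Proposition \ref{prop5.4}(i). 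This settles (i).

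Part (ii) is the mirror statement for the FMT $\HUp$, which has source twist $-w/y$ and target twist $x/y$, so that Propositions \ref{prop5.3} and \ref{prop5.4} apply to it. Here $\F_{b'}\subset\F_{-w/y}$ makes $\HUp^0_{\coh(X)}$ vanish on the $\F_{b'}$-piece by Proposition \ref{prop5.3}(3)(i), and the bottom cohomologies land in $\F_{x/y}\subset\F_b$ by Proposition \ref{prop5.3}(1)(iii) and (3)(iii); the top cohomology is the delicate point, and I expect it to be the main obstacle. For $T'\in\T_{b'}$ one removes the $\mu_{-w/y}$-positive part, which Proposition \ref{prop5.3}(2)(i) annihilates in top degree, leaving $T'_{\le 0}\in\HN_{-w/y}(-\frac{1}{2\lambda y^2},0]$; Proposition \ref{prop5.4}(ii) then gives only a \emph{closed} lower slope bound, whereas $\T_b$ is open at slope $0$. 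The resolution exploits the openness of $\T_{b'}$: the finitely many Harder--Narasimhan factors of $T'_{\le 0}$ have least $\mu_{-w/y}$-slope strictly above $-\frac{1}{2\lambda y^2}$, so Proposition \ref{prop5.4}(ii) applies with a strictly smaller parameter and forces $\HUp^3_{\coh(X)}(T'_{\le 0})$ into $\T_b$ (in the pure slope-$0$ case it is even torsion). Assembling the pieces by extensions then gives both inclusions.
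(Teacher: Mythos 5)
Your argument is correct and is essentially the paper's own proof: decompose along the torsion pair, characterize the band $\langle \B_{m'\ell,b'\ell},\B_{m'\ell,b'\ell}[-1],\B_{m'\ell,b'\ell}[-2]\rangle$ by its extremal $\coh(X)$-cohomologies, and control those via parts (1)(ii)--(iii), (2)(i),(iii), (3)(i),(iii) of Proposition \ref{prop5.2} together with Proposition \ref{prop5.4}. The only remark worth making is that your closed-versus-open worry in part (ii) is moot: applying Proposition \ref{prop5.4}(ii) to $\HUp$ with parameter $\frac{1}{2\lambda y^2}$ already places $\HUp^3_{\coh(X)}(T'_{\le 0})$ in $\HN_{x/y}[\lambda,+\infty]$, which sits strictly inside $\T_b=\HN_{x/y}(\lambda/2,+\infty]$ thanks to the factor of $2$ built into the definitions of $b$ and $b'$, so no shrinking of the parameter is required.
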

\begin{proof}
\begin{enumerate}[(i)]
\item If $E \in \F_{b} = \HN_{x/y}(-\infty, \frac{\lambda}{2}]$ then
  by (3)(i) of Proposition \ref{prop5.2} and (i) of Proposition \ref{prop5.4}   $\Up^0_{\coh(X)}(E) \in  \F_{b'}$.
   Also by (1)(ii) of Proposition \ref{prop5.2}
$\Up^3_{\coh(X)}(E) \in \T_{-w/y} \subset \T_{b'}$. Therefore $
  \Up(E)$ has $\B_{m' \ell, b' \ell}$-cohomologies in 1,2,3
  positions. That is
$$
\Up\left( \F_{b} \right)[1] \subset \langle \B_{m' \ell, b' \ell}
,\B_{m' \ell, b' \ell}[-1], \B_{m' \ell, b' \ell}[-2] \rangle.
$$
On the other hand, if $E \in \T_b = \HN_{x/y}( \frac{\lambda}{2},
+\infty]$ then by (2)(i) of Proposition \ref{prop5.2}  $\Up^3_{\coh(X)}(E) =0$ and by (2)(iii) of Proposition \ref{prop5.2}
  $\Up^2_{\coh(X)}(E) \in \HN_{-w/y}(0, +\infty] \subset \T_{b'}$.
So $\Up(E)$ has $\B_{m' \ell, b' \ell}$-cohomologies in positions
0,1,2 only. That is
$$
\Up \left( \T_{b} \right) \subset \langle \B_{m' \ell, b' \ell} ,\B_{m' \ell, b' \ell}[-1], \B_{m' \ell, b' \ell}[-2] \rangle.
$$
Hence $\Up \left( \B_{m \ell, b \ell} \right) \subset \langle \B_{m' \ell, b' \ell} ,\B_{m' \ell, b' \ell}[-1], \B_{m' \ell, b' \ell}[-2] \rangle$,
as $\B_{m \ell, b \ell} = \langle \F_b[1] ,\T_b \rangle$.

\item  We can use (3)(i), (3)(iii), (2)(i), (1)(iii) of Proposition \ref{prop5.2}  and (ii) of Proposition \ref{prop5.4}  in a
  similar way to the above proof.
\end{enumerate}
\end{proof}
%%%%%%%%%%%%%%%%%%%%%%%%
%%%%%%%%%%%%%%%%%%%%%%%%
Similar to section 5 in \cite{MP} one can prove the following. In this
case, we reduce to the special case of \cite[Theorem 5.1]{MP}.
\begin{lem}
\label{prop5.6}
Let $a, b \in \Z$ be such that $a>0$ and $\gcd(a,b)=1$. Let $E$ be a
slope stable torsion free sheaf with $\ch_k^{b/a}(E) = 0$ for
$k=1,2$. Then $E^{**}$ is a slope
stable semi-homogeneous bundle with $\ch(E^{**}) = (a^3, a^2b, ab^2, b^3)$.
\end{lem}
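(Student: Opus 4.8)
The plan is to push the vanishing of the twisted Chern character across the reflexive hull, then invoke Mukai's classification of semi-homogeneous bundles after reducing the slope to the normalisation of \cite[Theorem 5.1]{MP}. First I would unwind the hypotheses numerically: writing $\ch(E) = (a_0, a_1, a_2, a_3)$ and $t = b/a$, the relations $\ch_1^{b/a}(E) = \ch_2^{b/a}(E) = 0$ read $a_1 = t a_0$ and $a_2 = t^2 a_0$, and since $a_1, a_2 \in \Z$ with $\gcd(a,b) = 1$ this forces $a^2 \mid a_0$. In particular the Dr\'{e}zet discriminant of $E$, which is assembled from $\ch_1^{b/a}(E)$ and $\ch_2^{b/a}(E)$, already vanishes.

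Next I would pass to $E^{**}$. As $E$ is slope stable and torsion free, $E^{**}$ is slope stable of the same rank and first Chern class, and $Q = E^{**}/E$ is supported in codimension $\ge 2$; hence $\ch_1^{b/a}(E^{**}) = 0$ while $\ch_2(Q)$ is effective, so that $\ell^2\,\ch_2^{b/a}(E^{**}) = \ell^2\,\ch_2(Q) \ge 0$. Applying the Bogomolov inequality recalled in the proof of Proposition \ref{prop4.1} to the slope stable sheaf $E^{**}$, and using $\ch_1^{b/a}(E^{**}) = 0$ together with $\rk(E^{**}) = a_0 > 0$, gives the reverse inequality $\ell^2\,\ch_2^{b/a}(E^{**}) \le 0$. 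Therefore $\ell^2\,\ch_2(Q) = 0$, so $Q$ is $0$-dimensional and $\ch_2^{b/a}(E^{**}) = 0$. Thus $E^{**}$ is a slope stable reflexive sheaf with $\ch_1^{b/a}(E^{**}) = \ch_2^{b/a}(E^{**}) = 0$, i.e.\ of vanishing discriminant.

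I would then reduce to \cite[Theorem 5.1]{MP}. Running the Euclidean algorithm on $b/a$ as in Propositions \ref{prop2.2} and \ref{prop2.3}, I would assemble a composition of line-bundle twists and the transform $\Up$ carrying the slope $b/a$ to the slope normalised in \cite[Theorem 5.1]{MP}; each elementary step is an autoequivalence under which, by Propositions \ref{prop5.2} and \ref{prop5.3}, a slope stable reflexive sheaf of extremal slope and vanishing discriminant is sent to a complex concentrated in a single $\coh(X)$-degree which is again such a sheaf. Invoking \cite[Theorem 5.1]{MP} in the base case and transporting back shows that $E^{**}$ is a slope stable semi-homogeneous bundle, in particular locally free. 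Finally, being slope stable it is simple, so by Mukai's structure theory (\cite{Muk1}) one has $\ch(E^{**}) = \rk(E^{**})\,e^{(b/a)\ell} = \rk(E^{**})\,(1, b/a, b^2/a^2, b^3/a^3)$ in the abbreviated notation; integrality with $\gcd(a,b)=1$ forces $a^3 \mid \rk(E^{**})$, say $\rk(E^{**}) = a^3 n$, whence $\ch(E^{**}) = n\,(a^3, a^2 b, ab^2, b^3)$, and primitivity of the Mukai vector of a simple semi-homogeneous bundle (together with $\gcd(a^3, a^2 b, ab^2, b^3) = 1$) forces $n = 1$.

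The main obstacle is the reduction step: one must verify that at each elementary move the image really stays a sheaf, concentrated in one cohomological degree, rather than a genuine complex, which is precisely where the extremal-slope vanishing and the monotonicity statements of Propositions \ref{prop5.2} and \ref{prop5.3} are needed. The numerical endgame, transferring vanishing through $E^{**}$ via Bogomolov and then fixing $n=1$ by Mukai's primitivity, is by comparison routine.
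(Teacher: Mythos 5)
Your opening and closing steps track the paper: the passage to $E^{**}$ via $0 \to E \to E^{**} \to T \to 0$ with $T \in \coh^{\le 1}(X)$, the use of the (twisted) Bogomolov--Gieseker inequality to force $\ch_1^{b/a}(E^{**}) = \ch_2^{b/a}(E^{**}) = 0$, and the determination of $\ch(E^{**}) = (a^3, a^2b, ab^2, b^3)$ from Mukai's theory of simple semi-homogeneous bundles on a principally polarized abelian threefold are all essentially the paper's argument (the paper is terser but does the same thing). The problem is the central step, where you must upgrade ``slope stable reflexive with vanishing twisted discriminant'' to ``semi-homogeneous bundle''. Your proposed route --- running the Euclidean algorithm on $b/a$ and transporting $E^{**}$ through a chain of twists and copies of $\Up$ down to the normalisation of \cite[Theorem 5.1]{MP} --- has a genuine gap that you yourself flag but do not close: Propositions \ref{prop5.2} and \ref{prop5.3} only place the various $\Up^i_{\coh(X)}$ (resp.\ $\Pi^i_{\coh(X)}$) into the classes $\T_{-w/y}$, $\F_{-w/y}$ and assert reflexivity of the $0$th and $1$st cohomologies; they do not give vanishing of the remaining cohomologies for a sheaf sitting exactly on the wall $\mu_{x/y} = 0$, nor do they give preservation of slope stability. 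Establishing the required WIT property and stability of the image at each elementary move is comparable in difficulty to the lemma itself (indeed, such statements are usually proved \emph{using} semi-homogeneity), so the reduction as described does not go through.

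The paper sidesteps all of this with one move: consider $\calEnd(E^{**}) = E^{**} \otimes (E^{**})^{*}$. This is a slope semistable reflexive sheaf whose \emph{untwisted} Chern classes satisfy $\ch_1 = \ch_2 = 0$ (tensoring with the dual kills the twist by $b/a$, and the vanishing of $\ch_2$ is exactly the vanishing of the discriminant you already established). Hence \cite[Theorem 5.1]{MP} applies directly, with no Fourier--Mukai reduction, to show $\calEnd(E^{**})$ is a homogeneous bundle, and Mukai's characterisation in \cite{Muk1} (a simple bundle is semi-homogeneous if and only if its endomorphism bundle is homogeneous) then yields that $E^{**}$ is a stable semi-homogeneous bundle. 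I recommend you replace your reduction step with this argument; your numerical endgame can then be kept as is.
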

\begin{proof}
The slope stable torsion free sheaf $E$ fits into the short exact sequence
$
0 \to E \to E^{**} \to T \to 0
$
for some $T \in \coh^{\le 1}(X)$. Now $E^{**}$ is also slope stable
and so by the usual B-G inequality $\ch_k^{b/a}(E^{**}) = 0$ for
$k=1,2$.
Now we have $\ch_k(\calEnd( E^{**}) ) = 0$ for $k=1,2$ and  $\calEnd(E^{**})$ is
a slope semistable reflexive sheaf.
By \cite[Theorem 5.1]{MP} $\calEnd(E^{**})$ is a homogeneous
bundle. Therefore $E^{**}$ is a stable semi-homogeneous bundle (see
\cite{Muk1}) and so it is a restriction of a universal bundle which is
a kernel of some FMT. Since $X$ is principally polarized its Chern
character is $(a^3, a^2b, ab^2, b^3)$ as required.
\end{proof}

%%%%%%%%%%%%%%%%%%%%%%%%%%%%%%%%%%%%%%%%%%%%%%%%%%%%%%%%%%%%%%%%%%%%%%%%%%%%%%%%%%%%%%%%%%%%%%%%
%%%%%%%%%%%%%%%%%%%%%%%%%%%%%%%%%%%%%%%%%%%%%%%%%%%%%%%%%%%%%%%%%%%%%%%%%%%%%%%%%%%%%%%%%%%%%%%%
\section{Equivalences of the Categories $\A_{\om,B}$ Given by FMTs}
\label{section6}
%%%%%%%%%%%%%%%%%%%%%%%%%%%%%%%%%%%%%%%%%%%%%%%%%%%%%%%%%%%%%%%%%%%%%%%%%%%%%%%%%%%%%%%%%%%%%%%%
%%%%%%%%%%%%%%%%%%%%%%%%%%%%%%%%%%%%%%%%%%%%%%%%%%%%%%%%%%%%%%%%%%%%%%%%%%%%%%%%%%%%%%%%%%%%%%%%
The aim of this section is to complete the proof of Theorem \ref{prop4.5}.

It will be convenient to abbreviate the FMTs $\Up$ and $\HUp[1]$ by
$\Ga$ and $\HGa$ respectively.
Then by Theorem \ref{prop5.5}, the images of an object
from $\B_{m \ell, b \ell}$ (and $\B_{m' \ell, b' \ell}$) under $\Ga$ (and $\HGa$) are  complexes whose
cohomologies with respect to $\B_{m' \ell, b' \ell}$ (and $\B_{m \ell, b \ell}$) can only be non-zero in the $0$, $1$ or $2$ positions.

The abelian category $\B_{m \ell, b \ell} = \langle \F_b[1] ,\T_b \rangle$ does not depend on $m > 0$. So in the rest of the paper we write
$$
\Ga^i_{b} (E) : = H^{i}_{\B_{m \ell, b \ell}}(\Ga(E)).
$$
We  have $\Ga \circ \HGa \cong \id_{D^b(X)}[-2]$ and $\HGa \circ \Ga \cong \id_{D^b(X)}[-2]$. This gives us the following
convergence of spectral sequences and they generalize \cite[Spectral Sequence 6.1]{MP}.
%%%%%%%%%%%%%%%%%%%%%%%%
%%%%%%%%%%%%%%%%%%%%%%%%
\begin{specseq}
\label{Bss}
\begin{align*}
E_2^{p,q} &= \HGa^p_{b} \Ga^q_{b'} (E) \Longrightarrow H^{p+q-2}_{\B_{m \ell, b \ell}} (E),\\
E_2^{p,q} &= \Ga^p_{b'} \HGa^q_{b} (E) \Longrightarrow H^{p+q-2}_{\B_{m' \ell, b' \ell}} (E).
\end{align*}
\end{specseq}

%%%%%%%%%%%%%%%%%%%%%%%%
%%%%%%%%%%%%%%%%%%%%%%%%
\begin{prop}
\label{prop6.1}
For objects $E$ we have the following:
\begin{enumerate}[(1)]
%%%%%%%%%%%%%%%%%%%%%%%%%%%%%%%%%%%%%%%%%%%%%%%%
\item for $E \in \T'_{m' \ell, b' \ell}$
\begin{itemize}
\item[] (i) $H^0_{\coh(X)}(\HGa^2_{b}(E)) = 0$, and (ii) if
  $\HGa^2_{b}(E) \ne 0$ then $\Im\,Z_{m\ell,b\ell}(\HGa^2_{b}(E)) >
  0$,
\end{itemize}
%%%%%%%%%%%%%%%%%%%%%%%%%%%%%%%%%%%%%%%%%%%%%%%%
%%%%%%%%%%%%%%%%%%%%%%%%%%%%%%%%%%%%%%%%%%%%%%%%
\item for $E \in \F'_{m' \ell, b' \ell}$
\begin{itemize}
\item[] (i) $H^{-1}_{\coh(X)}(\HGa^0_{b}(E)) = 0$, and (ii) if
  $\HGa^0_{b}(E) \ne 0$ then $\Im\,Z_{m\ell,b\ell}(\HGa^0_{b}(E)) <
  0$,
\end{itemize}
%%%%%%%%%%%%%%%%%%%%%%%%%%%%%%%%%%%%%%%%%%%%%%%%
\item for $E \in \T'_{m \ell, b \ell}$
\begin{itemize}
\item[] (i) $H^0_{\coh(X)}(\Ga^2_{b'}(E)) = 0$, and (ii) if
  $\Ga^2_{b'}(E) \ne 0$ then $\Im\,Z_{m'\ell,b'\ell}(\Ga^2_{b'}(E)) >
  0$,
\end{itemize}
%%%%%%%%%%%%%%%%%%%%%%%%%%%%%%%%%%%%%%%%%%%%%%%%
\item for $E \in \F'_{m \ell, b \ell}$
\begin{itemize}
\item[] (i) $H^{-1}_{\coh(X)}(\Ga^0_{b'}(E)) = 0$, and (ii) if
  $\Ga^0_{b'}(E) \ne 0$ then $\Im\,Z_{m'\ell,b'\ell}(\Ga^0_{b'}(E)) <
  0$.
\end{itemize}
%%%%%%%%%%%%%%%%%%%%%%%%%%%%%%%%%%%%%%%%%%%%%%%%
\end{enumerate}
\end{prop}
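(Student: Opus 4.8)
The plan is to exploit the symmetry between the two transforms and then reduce each assertion to the structural results of Section~5 together with the spectral sequences. The four statements form two symmetric pairs: replacing $\Ga=\Up$ by $\HGa=\HUp[1]$ and simultaneously interchanging $(m,b)$ with $(m',b')$ (which on Chern characters is the substitution $(x,y,z,w)\mapsto(-w,y,z,-x)$ of Theorem~\ref{prop2.4}) carries (3) to (1) and (4) to (2). Since $\Up$ and $\HUp$ satisfy the same structural results (Propositions~\ref{prop5.2} and~\ref{prop5.4} apply to both, with the source/target slopes $x/y$ and $-w/y$ exchanged), I would prove (3) and (4) in detail and obtain (1) and (2) verbatim with the roles of the two transforms swapped. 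Throughout, Theorem~\ref{prop5.5} confines the $\B_{m'\ell,b'\ell}$-cohomology of $\Ga(E)$ to degrees $0,1,2$, so that $\Ga^0_{b'}(E)$ and $\Ga^2_{b'}(E)$ are genuinely the bottom and top cohomology objects of a three-term complex.

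For the vanishing statements (part (i) in each case) I would first extract corner terms from Spectral Sequence~\ref{Bss}. For $E\in\T'_{m\ell,b\ell}\subset\B_{m\ell,b\ell}$ the convergence $E_2^{p,q}=\HGa^p_b\Ga^q_{b'}(E)\Rightarrow H^{p+q-2}_{\B_{m\ell,b\ell}}(E)$ is concentrated in total degree $0$, so the surviving corner terms give $\HGa^2_b\Ga^2_{b'}(E)=0$ and $\HGa^0_b\Ga^0_{b'}(E)=0$. I would then analyse the $\coh(X)$-cohomology of the extreme term directly: decompose $E$ through its $\coh(X)$-cohomology, whose Harder--Narasimhan factors are slope-constrained by $E\in\T'_{m\ell,b\ell}$ (Proposition~\ref{prop4.1}), apply $\Up$, and locate each resulting sheaf using Proposition~\ref{prop5.2}. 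The torsion part $H^0_{\coh(X)}(\Ga^2_{b'}(E))\in\T_{b'}$ is the obstruction to remove; feeding it back through $\HGa$ and comparing with the corner vanishing $\HGa^2_b\Ga^2_{b'}(E)=0$ forces it to vanish, so that $\Ga^2_{b'}(E)\in\F_{b'}[1]$. Statement (4) is symmetric: there the bottom term $\Ga^0_{b'}(E)$ is shown to have no $H^{-1}_{\coh(X)}$, i.e.\ to lie in $\T_{b'}$, using $E\in\F'_{m\ell,b\ell}$ and Proposition~\ref{prop4.2} in place of Proposition~\ref{prop4.1}.

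For the sign statements (part (ii)) the starting point is Proposition~\ref{prop4.4}: for $E\in\T'_{m\ell,b\ell}$ one has $\Im\,Z_{m'\ell,b'\ell}(\Ga(E))=-|\lambda y|^{-3}\,\Im\,Z_{m\ell,b\ell}(E)$, and since $E\in\T'_{m\ell,b\ell}$ forces $\Im\,Z_{m\ell,b\ell}(E)>0$, the total imaginary part of $\Ga(E)$ is negative. I would then pin the sign of the extreme cohomology term by combining this with the refined slope location coming from Proposition~\ref{prop5.4}: the sharp thresholds $\pm\tfrac{1}{2\lambda y^2}$ place $\Ga^2_{b'}(E)$ (equivalently the sheaf $H^{-1}_{\coh(X)}(\Ga^2_{b'}(E))$ of part (i)) in a Harder--Narasimhan window of $\HN_{-w/y}$ from which $\Im\,Z_{m'\ell,b'\ell}(\Ga^2_{b'}(E))>0$ can be read off through the explicit formula of Proposition~\ref{prop4.3} and the twisted-character action of Theorem~\ref{prop2.4}. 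The analogous computation, with signs reversed, gives $\Im\,Z_{m'\ell,b'\ell}(\Ga^0_{b'}(E))<0$ for statement (4).

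The main obstacle is precisely the appearance of the extreme cohomology objects just outside the clean ranges $\T_{x/y},\F_{x/y}$: they are controlled not by the crude dichotomy of Proposition~\ref{prop5.2} but only by the finer Harder--Narasimhan windows cut out by the thresholds $\pm\tfrac{1}{2\lambda y^2}$ of Proposition~\ref{prop5.4} (the gap between $-w/y$ and $b'$, respectively between $x/y$ and $b$). Coordinating these slope windows with the corner vanishings of Spectral Sequence~\ref{Bss}, and then converting the resulting slope inequalities into the required sign of $\Im\,Z$ via Proposition~\ref{prop4.3}, is the delicate bookkeeping on which the argument turns; I expect this, rather than any conceptual difficulty, to be where the real work lies.
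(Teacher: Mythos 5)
Your overall architecture (prove two of the four cases and obtain the others from the symmetry $\Ga\leftrightarrow\HGa$, $(m,b)\leftrightarrow(m',b')$) matches the paper, but both of your key deductions have genuine gaps. For the vanishing statements (i), the corner terms of Spectral Sequence \ref{Bss} are too weak: $\HGa^2_{b}\Ga^2_{b'}(E)=0$ cannot force $H^0_{\coh(X)}(\Ga^2_{b'}(E))=0$, because $\HGa^2_{b}$ annihilates nonzero sheaves of $\T_{b'}$. For instance $\HGa(\OO_s)=\E^*_{X\times\{s\}}[1]$ is concentrated in $\B_{m\ell,b\ell}$-degree $0$, so $\HGa^2_{b}(\OO_s)=0$ while $0\ne\OO_s\in\T_{b'}$; indeed the table following the proposition records that $\HGa^2_{b}$ kills all of $\T'_{m'\ell,b'\ell}$, so the corner vanishing is consistent with the top cohomology being any object of that category. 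The paper's argument for (1)(i) is instead a direct Hom computation: $\Hom(\HGa^2_{b}(E),\OO_s)\cong\Hom(\HGa(E),\HGa(\E_{\{s\}\times X}))\cong\Hom(E,\E_{\{s\}\times X})=0$, using that $\OO_s\cong\HGa^2_{b}(\E_{\{s\}\times X})$ is the top cohomology of the image of an object of $\F'_{m'\ell,b'\ell}$, the faithfulness of $\HGa$, and the torsion-pair orthogonality $\Hom(\T',\F')=0$. This identification of skyscrapers as images of fibres of the universal bundle is the ingredient your sketch is missing.

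For the sign statements (ii), your plan only delivers the non-strict inequality. Locating $A=\Ga^0_{b'}(E)$ in the Harder--Narasimhan window $\HN_{-w/y}(-\tfrac{1}{2\lambda y^2},0]$ (via Propositions \ref{prop4.1} and \ref{prop5.2} and the $\coh(X)$-spectral sequence) and applying the Bogomolov--Gieseker inequality to its semistable factors gives $\Im\,Z_{m'\ell,b'\ell}(A)\le 0$; the strict inequality requires excluding the boundary case $\ch^{-w/y}(A)=(a_0,0,0,*)$, and that exclusion is where most of the paper's work lies: reflexivity of $A$, a Jordan--H\"older filtration whose factors are identified by Lemma \ref{prop5.6} as subsheaves of fibres $\E_{\{x_i\}\times X}$, a WIT induction showing $A\in V^{\HUp}_{\coh(X)}(3)$, and a final contradiction extracted from Spectral Sequence \ref{Bss}. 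Your proposed starting point is also shaky: knowing the total $\Im\,Z_{m'\ell,b'\ell}(\Ga(E))$ from Proposition \ref{prop4.4} does not isolate the sign of a single cohomology object of the three-term complex, and $E\in\T'_{m\ell,b\ell}$ only gives $\Im\,Z_{m\ell,b\ell}(E)\ge 0$, with equality occurring on $\coh^0(X)$.
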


\begin{proof}
The proofs for (1), (2), (3), (4) are similar to the proofs of
\cite[Propositions 6.4, 6.5, 6.6]{MP}.
However we give proofs of some of them to illustrate the similarities.

\noindent (1)(i).  Let $E \in \T'_{m' \ell, b' \ell}$.
 For any $s \in X$,
\begin{align*}
\Hom ( \HGa^2_{b}(E) , \OO_s  ) & \cong  \Hom ( \HGa^2_{b}(E) , \HGa^2_{b}(\E_{\{s\} \times X}) ) \\
                               & \cong  \Hom ( \HGa(E) , \HGa(\E_{\{s\} \times X}) ) \\
                               & \cong  \Hom ( E, \E_{\{s\} \times X} ) = 0,
\end{align*}
since $E \in \T'_{m' \ell, b' \ell}$ and $ \E_{\{s\} \times X}  \in
\F'_{m' \ell, b' \ell}$. Therefore $H^0_{\coh(X)}(\HGa^2_{b}(E)) = 0$
as required. \medskip

\noindent (4)(ii).  Let $E \in \F'_{m \ell, b \ell}$.
From (i) of (4), we have $\Ga^0_{b'}(E) \cong A$ for some $0
  \neq A \in \T_{b'} = \HN_{-w/y}(-\frac{1}{2 \lambda y^2},+\infty]$.

Consider the convergence of the spectral sequence:
$$
E^{p,q}_2=\Ga^{p}_{\coh(X)}(H^{q}_{\coh(X)}(E)) \Longrightarrow \Ga^{p+q}_{\coh(X)}(E)
$$
for $E$.
Let $E_i = H^{i}_{\coh(X)}(E)$. Then by Proposition \ref{prop4.1},
$E_{-1} \in \HN_{x/y}(-\infty, 0]$ and so by (3)(iii) and (1)(iii) of
  Proposition \ref{prop5.2} we have
$$
\Ga^1_{\coh(X)}(E_{-1}) \in \HN_{-w/y}(-\infty,0], \text{ and } \Ga^0_{\coh(X)}(E_{0}) \in \HN_{-w/y}(-\infty,0].
$$
Therefore from the convergence of the above spectral sequence for $E$, we have
$$
A \in  \HN_{-w/y}(-\frac{1}{2 \lambda y^2},+\infty] \cap \HN_{-w/y}(-\infty,0]  = \HN_{-w/y}(-\frac{1}{2 \lambda y^2},0] .
$$
Also by (3)(ii) and (1)(i) of Proposition \ref{prop5.2}
$\Ga^1_{\coh(X)}(E_{-1})$ and  $\Ga^0_{\coh(X)}(E_{0})$ are reflexive
sheaves and so $A$ is reflexive.
Let $\ch^{-w/y}(A)= (a_0, a_1, a_2, a_3)$. Then from the usual B-G
inequalities for all the H-N semistable factors of $A$ we obtain
$a_2 + \frac{1}{2\lambda y^2} a_1 \le 0$. So we have
$$
\Im\,Z_{m'\ell, b'\ell}(\Ga^0_{b'}(E)) =  \Im\,Z_{m'\ell,
  b'\ell}(\Ga^0_{b'}(A))  = \frac{3 \sqrt{3}}{2 \lambda y^2}\left(a_2
+ \frac{1}{\lambda y^2} a_1 \right) \le 0.
$$
Equality holds when $A \in \HN_{-w/y}[0]$ with $\ch^{-w/y}(A) = (a_0, 0,
0, *)$. Then, by considering a Jordan{-}H\"{o}lder filtration
for $A$ together with Lemma \ref{prop5.6}, $A$ has a
filtration of sheaves $K_i$ each of them fits into the $\coh(X)$-SESs
$$
0 \to K_i \to \E_{\{x_i\} \times X} \to \OO_{Z_i} \to 0
$$
for some 0-subschemes $Z_i \subset X$.
Here $\Ga^0_{b'}(E) \cong A \in V^{\HGa}_{\B_{m \ell, b \ell}}(2)$
implies $A \in V^{\HUp}_{\coh(X)}(2,3)$.  An easy induction on the number
of $K_i$ in $A$  shows that
$A \in V^{\HUp}_{\coh(X)}(1,3)$ and so $A \in
V^{\HUp}_{\coh(X)}(3)$. Therefore  $Z_i =\emptyset$ for all $i$ and so
$\HGa^2_{b} \Ga^0_{b'}(E) \in \coh^0(X)$. Now consider the
convergence of the Spectral Sequence \ref{Bss} for $E$. We have
$\B_{m \ell, b \ell}$-SES
$$
0 \to \HGa^0_{b}\Ga^1_{b'}(E)  \to \HGa^2_{b}\Ga^0_{b'}(E)  \to G \to 0,
$$
where $G$ is a subobject of $E$ and so $G \in \F'_{m \ell, b \ell}$. Now
$\HGa^2_{b} \Ga^0_{b'}(E) \in \coh^0(X) \subset \T'_{m \ell, b \ell}$ implies $G =0$ and so
$\HGa^0_{b}\Ga^1_{b'}(E)  \cong \HGa^2_{b}\Ga^0_{b'}(E)$.
Then we have $\Ga^0_{b'}(E) \cong \Ga^0_{b'} \HGa^0_{b}\Ga^1_{b'}(E) =0$.
This is not possible as  $\Ga^0_{b'}(E) \ne 0$.
Therefore we have the strict inequality $\Im\,Z_{m'\ell,
  b'\ell}(\Ga^0_{b'}(E)) <0$ as required. This completes the proof.
\end{proof}

%%%%%%%%%%%%%%%%%%%%%%%%
%%%%%%%%%%%%%%%%%%%%%%%%
As in \cite[Lemma 6.7, Corollary 6.8, Proposition 6.9]{MP} we
obtain the following table of results for the images of $\B$-objects
under the FMTs.

\begin{center}
\begin{spacing}{1.5}
%\begin{tabular}{ p{2.5cm} p{2.5cm} p{2.5cm} p{2.5cm}}
\setlength{\tabcolsep}{0.7cm}
\begin{tabular}{ c c c c}
\toprule
$E$ & $\Ga^0_{b'}(E)$ & $\Ga^1_{b'}(E)$ & $\Ga^2_{b'}(E)$  \\
\midrule
$\F'_{m \ell, b \ell}$ &  $0$ & $\F'_{m' \ell, b' \ell}$ & $\T'_{m' \ell, b' \ell}$ \\
$\T'_{m \ell, b \ell}$ &  $\F'_{m' \ell, b' \ell}$ & $\T'_{m' \ell, b' \ell}$ & $0$ \\
\midrule \midrule
$E$ & $\HGa^0_{b}(E)$ & $\HGa^1_{b}(E)$ & $\HGa^2_{b}(E)$  \\
\midrule
$\F'_{m' \ell, b' \ell}$ &  $0$ & $\F'_{m \ell, b \ell}$ & $\T'_{m \ell, b \ell}$ \\
$\T'_{m' \ell, b' \ell}$ &  $\F'_{m \ell, b \ell}$ & $\T'_{m \ell, b \ell}$ & $0$ \\
\bottomrule
\end{tabular}
\end{spacing}
\end{center}

Now we have
$\Ga[1]\left(\F'_{m \ell, b\ell}[1]\right) \subset \A_{m' \ell,
  b'\ell}$ and $\Ga[1]\left(\T'_{m \ell, b\ell}\right) \subset \A_{m'
  \ell, b'\ell}$.
Since $\A_{m \ell, b\ell} = \langle \F'_{m \ell, b\ell}[1], \T'_{m
  \ell, b\ell} \rangle$,  $\Ga[1]\left( \A_{m \ell, b\ell} \right)
\subset \A_{m' \ell, b'\ell}$.
Similarly $\HGa[1]\left(\A_{m' \ell, b'\ell}\right) \subset \A_{m \ell, b\ell}$.
The isomorphisms $\HGa[1] \circ \Ga[1] \cong  \id_{D^b(X)}$ and
$\Ga[1] \circ \HGa[1] \cong \id_{D^b(X)}$ give us the equivalences
$$
\Ga[1]\left( \A_{m \ell, b\ell} \right) \cong \A_{m' \ell, b'\ell},
\text{ and } \HGa[1]\left(\A_{m' \ell, b'\ell}\right) \cong \A_{m
  \ell, b\ell}
$$
of the abelian categories as claimed in Theorem \ref{prop4.5}.
%%%%%%%%%%%%%%%%%%%%%%%%%%%%%%%%%%%%%%%%%%%%%%%%%%%%%%%%%%%%%%%%%%%%%%%%%%%%%%%%%%%%%%%%%%%%%%%%
\section*{Acknowledgements}
The authors would like to thank Arend Bayer and Tom Bridgeland for very useful discussions.
Special thanks go to the referee  for pointing out several errors  and also for giving useful comments
that led to a substantial improvement of this paper.
The second author is  grateful for the support of Principal's Career Development Scholarship programme 
and Scottish Overseas Research Student Awards Scheme of the University of Edinburgh, 
and of the World Premier International Research Center Initiative (WPI Initiative), MEXT, Japan.
This work forms a part of second author's PhD thesis completed at the University of Edinburgh.
%%%%%%%%%%%%%%%%%%%%%%%%%%%%%%%%%%%%%%%%%%%%%%%%%%%%%%%%%%%%%%%%%%%%%%%%%%%%%%%%%%%%%%%%%%%%%%%%

%%%%%%%%%%%%%%%%%%%%%%%%%%%%%%%%%%%%%%%%%%%%%%%%%%%%%%%%%%%%%%%%%%%%%%%%%%%%%%%%%%%%%%%%%%%%%%%%
\end{document}